\newtheorem{thm}{Theorem}[section]
\newtheorem{corollary}[thm]{Corollary}
\newtheorem{lemma}[thm]{Lemma}
\newtheorem{proposition}[thm]{Proposition}
\newtheorem{prop}[thm]{Proposition}
\newtheorem{conjecture}[thm]{Conjecture}
\newtheorem{thm-dfn}[thm]{Theorem-Definition}
\theoremstyle{definition}
\newtheorem{definition}[thm]{Definition}
\numberwithin{equation}{section}
\theoremstyle{remark}
\newtheorem{remark}{Remark}[section]
\newtheorem{example}[remark]{Example}
\newcommand{\fg}{{\mathfrak g}}
\newcommand{\ft}{{\mathfrak t}}
\newcommand{\rW}{{\mathrm W}}
\newcommand{\bC}{{\mathbb C}}
\newcommand{\bX}{{\mathbb X}}
\newcommand{\bG}{{\mathbb G}}
\newcommand{\bZ}{{\mathbb Z}}
\newcommand{\mS}{\mathcal{S}}
\newcommand{\mE}{\mathcal{E}}
\newcommand{\mF}{\mathcal{F}}
\newcommand{\mM}{\mathcal{M}}
\newcommand{\mT}{\mathcal{T}}
\newcommand{\mO}{\mathcal{O}}
\newcommand{\mL}{\mathcal{L}}
\newcommand{\mG}{\mathcal{G}}
\newcommand{\sH}{\mathscr{H}}
\newcommand{\sD}{\mathscr{D}}
\newcommand{\on}{\operatorname}
\newcommand{\ra}{\rightarrow}
\newcommand{\is}{\simeq}
\newcommand{\Loc}{\on{Loc}}
\newcommand{\quash}[1]{}  
\newcommand{\nc}{\newcommand}
\newcommand{\bbA}{{\mathbb A}}
\newcommand{\bbC}{{\mathbb C}}
\newcommand{\bbD}{{\mathbb D}}
\newcommand{\bbQ}{{\mathbb Q}}
\newcommand{\bbX}{{\mathbb X}}
\newcommand{\bbZ}{{\mathbb Z}}
\newcommand{\calC}{{\mathcal C}}
\newcommand{\calF}{{\mathcal F}}
\newcommand{\calM}{{\mathcal M}}
\newcommand{\calR}{{\mathcal R}}
\newcommand{\calS}{{\mathcal S}}
\newcommand{\calT}{{\mathcal T}}
\newcommand{\calX}{{\mathcal X}}
\newcommand{\calY}{{\mathcal Y}}
\newcommand{\calZ}{{\mathcal Z}}
\nc{\al}{{\alpha}} \nc{\be}{{\beta}} \nc{\ga}{{\gamma}}
\nc{\ve}{{\varepsilon}} \nc{\Ga}{{\Gamma}} 
\nc{\La}{{\Lambda}}
\newcommand{\Dmod}{\on{D-mod}}
\nc{\ad }{{\on{ad }}}
\nc{\aff}{{\on{aff}}} \nc{\Aff}{{\mathbf{Aff}}}
\nc{\der}{{\on{der}}}
\nc{\diag}{{\on{diag}}}
\nc{\Fl}{{\calF\ell}}
\nc{\Hg}{{\on{Higgs}}}
\newcommand{\Hom}{{\on{Hom}}}
\newcommand{\id}{{\on{id}}}
\nc{\Id}{{\on{Id}}}
\nc{\Ind}{{\on{Ind}}}
\newcommand{\Lie}{{\on{Lie}}}
\nc{\Op}{{\on{Op}}}
\newcommand{\pr}{{\on{pr}}}
\newcommand{\Res}{{\on{Res}}}
\nc{\res}{{\on{res}}}
\newcommand{\Spec}{{\on{Spec}}}
\nc{\tr}{{\on{tr}}}
\newcommand{\GL}{{\on{GL}}}
\nc{\GSp}{{\on{GSp}}} \nc{\GU}{{\on{GU}}} \nc{\SL}{{\on{SL}}}
\nc{\SU}{{\on{SU}}} \nc{\SO}{{\on{SO}}}
\nc{\oH}{\on{H}}
\nc{\Shv}{\on{Shv}}
\nc{\tShv}{\on{Shv}^\heart}
\nc{\nh}{{\Loc_{J^p}(\tau')}}
\nc{\bnh}{{\Loc_{\breve J^p}(\tau')}}
\nc{\bU}{{\overline{U}}} \nc{\IC}{{\on{IC}}}
\newcommand{\barQ}{\overline{\mathbb Q}_\ell}
\newcommand{\beqn}{\begin{equation*}}
\newcommand{\eeqn}{\end{equation*}}
\newcommand{\beq}{\begin{equation}}
\newcommand{\eeq}{\end{equation}}
\newcommand{\sign}{\on{sign}}
\newcommand{\ul}{{\underline\lambda}}
\nc{\ot}{\otimes}
\newcommand{\rS}{{\mathrm S}}
\begin{document}
\title{A vanishing conjecture: the $\GL_n$ case}
\author{}
        \address{}
        \email{}
         \author{Tsao-Hsien Chen}
        \address{School of Mathematics,
      University of Minnesota, Twin Cities}
         \email{chenth@umn.edu}
\thanks{}
\thanks{}

\begin{abstract}
In this article we propose a vanishing conjecture for a certain class of $\ell$-adic complexes 
on a reductive group $G$ which can be regraded as a generalization of the acyclicity of 
the Artin-Schreier sheaf.
We show that the vanishing conjecture
contains, as a special case, a conjecture of Braverman and Kazhdan 
on the acyclicity of 
$\rho$-Bessel sheaves~\cite{BK1}.
Along the way, we introduce a certain class of 
Weyl group equivariant
$\ell$-adic complexes on a maximal torus called \emph{central complexes}
and relate the category of central complexes to
the Whittaker category on $G$.
We prove the vanishing conjecture in the case when $G=\GL_n$.

\end{abstract}

\maketitle     
\setcounter{tocdepth}{1} 
\tableofcontents

\section{Introduction}
\subsection{}
The Artin-Schreier sheaf $\mL_\psi$ on the additive group $\bG_a$ over
an algebraic closure of a finite field
has the following basic and important 
cohomology vanishing property
\[
\oH^*_c(\bG_a,\mL_\psi)=0.
\]
If we identify $\bG_a$ as the unipotent radical $U$ of the standard Borel $B$ in $\SL_2$, 
then the acyclicity of the Artin-Schreier sheaf above can be restated as follows.
Let $\tr:\SL_2\to\mathbb \bG_a$ be the trace map
and consider the pull back $\Phi=\tr^*\mL_\psi$.
For any $x\in \SL_2\setminus B$,  
we have the following cohomology vanishing property
\beq\label{vanishing sl_2}
\oH^*_c(Ux,i^*\Phi)=0.
\eeq
Here $i:Ux\to \SL_2$ is the natural inclusion map.
Indeed, 
it follows from the fact that 
the trace map $\tr$ restricts to an isomorphism 
$Ux\is\mathbb \bG_a$ for $x\in \SL_2\setminus B$.

In this article we 
propose a vanishing conjecture for a certain class of $\ell$-adic complexes 
on a reductive group $G$ generalizing the 
cohomology vanishing in~\eqref{vanishing sl_2}, and
hence can be regraded as a generalization of
the acyclicity of the Artin-Schreier sheaf.

We show that the vanishing conjecture implies a conjecture of Braverman and Kazhdan
on the acyclicity of $\rho$-Bessel sheaves \cite{BK1}
(Theorem \ref{main 1} and Corollary \ref{vanishing implies BK}) and 
we prove the vanishing conjecture in the case 
$G=\GL_n$ (Theorem \ref{main 2}).
Along the way, we introduce a certain class of 
Weyl group equivariant
$\ell$-adic complexes on a maximal torus called \emph{central complexes}
and relate the category of central complexes to the 
Whittaker category on $G$ (or rather, the de Rham counterpart of the Whittaker category). 

The proof of the vanishing conjecture for $\GL_n$
generalizes the one in \cite{CN}
for the proof of Braverman-Kazhdan
conjecture for $\GL_n$ using mirabolic subgroups.
A new ingredient here is 
a generalization of Deligne's result of 
symmetric group actions on Kloosterman sheaves 
to the setting of central complexes
(Proposition \ref{Deligne's lemma}).

We now describe the paper in more details.
\subsection{Central complexes}
Let $k$ be an algebraic closure of a finite field 
$k_0$ with $q$-element of characteristic $p>0$.
We fix a prime number $\ell$ different from $p$. 
Let $G$ be a connected reductive group over 
$k$.
Let $T$ be a maximal tour of $G$ and 
$B$ be a Borel subgroup containing $T$ with 
unipotent radical $U$. 
Denote by
$\rW=T\backslash N_G(T)$ the Weyl group, where $N_G(T)$
is the normalizer of $T$ in $G$.
Let 
$\calC(T)(\barQ)$ be the set consisting of 
characters of the tame \'etale fundamental group $\pi_1(T)^t$
(see Section \ref{C(T)}).
For any $\chi\in\calC(T)(\barQ)$ we denote by 
$\mL_\chi$ the corresponding tame local system on
$T$ (a.k.a the Kummer local system associated to $\chi$).
The Weyl group $\rW$ acts naturally on $\calC(T)(\barQ)$ and for any $\chi\in\calC(T)(\barQ)$
we denote by $\rW_\chi'$ the stabilizer of 
$\chi$ in $\rW$ and 
$\rW_\chi\subset\rW'_\chi$, the subgroup of $\rW_\chi'$ generated by those reflections 
$s_\alpha$ such that the pull-back 
$(\check\alpha)^*\mL_\chi$ is isomorphic to the trivial local system, 
 where 
$\check\alpha:\bG_m\to T$ is the coroot associated 
to $\alpha$. The group
$\rW_\chi$ is a normal subgroup of $\rW_\chi'$ and,
in general, we have $\rW_\chi\subsetneq\rW_\chi'$ (see Example \ref{W_chi}).\footnote{The group $\rW_\chi$ plays an important role in the study of 
representations of finite reductive groups and character sheaves (see, e.g., \cite{Lu}).}

Denote by
$\sD_\rW(T)$ 
the $\rW$-equivariant derived category of constructible 
$\ell$-adic complexes on $T$.
For any $\mF\in\sD_\rW(T)$
and $\chi\in\calC(T)(\barQ)$, the 
$\rW$-equivariant structure on $\mF$
together with
the natural $\rW_\chi'$-equivivariant structure on $\mL_\chi$
give rise to an
action of 
$\rW'_\chi$ on the
\'etale cohomology groups
$\oH_c^*(T,\mF\otimes\mL_\chi)$ (resp. $\oH^*(T,\mF\otimes\mL_\chi)$).
In particular, we get an action of the subgroup 
$\rW_\chi\subset\rW_\chi'$ on the cohomology groups above.
Denote by $\sign_\rW:\rW\to\{\pm1\}$ the sign character of 
$\rW$.

We have the following key definition.
\begin{definition}\label{def of central}
A $\rW$-equivariant complex $\mF\in\sD_\rW(T)$ 
is called \emph{central} (resp. $*$-\emph{central}) if for any tame character $\chi\in\calC(T)(\barQ)$, the group
$\rW_\chi$ acts on \[\oH_c^*(T,\mF\otimes\mL_\chi)\ \ \ \ \ 
(\text{resp.}\ \ \oH^*(T,\mF\otimes\mL_\chi))\]
via the sign character $\sign_\rW$.

\end{definition}

\begin{remark}
The Verdier duality maps central objects to $*$-central objects
and vice versa.
\end{remark}

\begin{example}\label{sl_2 example}
Consider the case $G=\SL_2$.
Let $\tr_T:T\is\bG_m\to\bG_a, t\to t+t^{-1}$ and 
consider $\mF=\tr_T^*\mL_\psi[1]$ with the canonical $\rW$-equivaraint structure.
We claim that $\mF$ is central.
For this we observe that
$\rW_\chi\neq e$ if and only if $\chi$ is trivial.
Thus $\mF$ is central if and only if
$\rW$ acts on
$\oH_c^*(T,\mF)$ by the sign character, equivalently,
the non-trivial involution $\sigma\in\rW$ acts by $-1$ on $\oH_c^*(T,\mF)$.
On the other hand, we have $\oH_c^*(T,\mF)^\sigma\is\oH_c^*(\bG_a,\mL_\psi\otimes(\tr_{T,!}\mathbb Q_\ell)^\sigma)\is
\oH_c^*(\bG_a,\mL_\psi)=0$
and it implies $\mF$ is central.

\quash{
Let $G=\SL_2$ and $\tr_T:T\to\bG_a, t\to t+t^{-1}$
be the trace map.
The pull back $\mF=\tr_T^*\mL_\psi$ is naturally $\rW$-equivariant
and we claim  that 
$\mF$
 is both central and $*$-central.
 Indeed, in this case  
$\rW_\chi$ is nontrivial if and only if $\chi$ is trivial, and 
in that case a simple computation shows that the
$\rW$-action on $\oH_c^*(T,\mF)$ (resp. $\oH^*(T,\mF)$) is given by the sign character.}
\end{example}

\begin{example}
Using Mellin transforms in \cite{GL}, one can associate to each
$\rW$-orbit $\theta$ in $\calC(T)(\barQ)$ a 
tame central local system on $T$ (see Section \ref{tame central}).

\end{example}

\quash{
\begin{remark}
The category of central complexes (resp. $*$-central complexes)
admits a symmetric monoidal structure (see Section \ref{Mellin}).

\end{remark}}

\subsection{Statement of the vanishing conjecture}\label{statement of vanishing conj}
We have the induction functor \[\Ind_{T\subset B}^G:\sD(T)\to \sD(G)\]
between the 
derived categories of $\ell$-adic sheaves on $T$ and $G$.
For $\mF\in\sD_\rW(T)$, the $\rW$-equivariant 
structure on $\mF$ defines a $\rW$-action on
$\Ind_{T\subset B}^G(\mF)$ and we denote by
\[\Phi_\mF:=\Ind_{T\subset B}^G(\mF)^\rW\]
the $\rW$-invariant factor in $\Ind_{T\subset B}^G(\mF)$
(see Section \ref{W action}). 
We propose the the following conjecture on
acyclicity of $\Phi_{\mF}$ over certain affine subspaces of $G$, called the 
vanishing conjecture:
\begin{conjecture}\label{vanishing conj}
Assume  $\mF\in\sD_\rW(T)$ is central (resp. $*$-central).
For any $x\in G\setminus B$, 
we have the following cohomology vanishing
\beq\label{statement}
\ \ \ \ \ \ \ \ \ \ \ \oH_c^*(Ux,i^*\Phi_{\mF})=0\ \ \ \ \ \ (\text{resp.}\ \oH^*(Ux,i^!\Phi_{\mF})=0)
\eeq
where $i:Ux\to G$ is the natural inclusion map.
Equivalently, 
the complex $\pi_!(\Phi_\mF)$ (resp. $\pi_*(\Phi_\mF)$) is supported on the closed subset 
$T=U\backslash B\subset U\backslash G$.
Here $\pi:G\to U\backslash G$ is the quotient map.

\end{conjecture}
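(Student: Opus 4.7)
\smallskip

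The plan is to establish Conjecture~\ref{vanishing conj} for $G=\GL_n$ by induction on $n$, generalizing the mirabolic-subgroup argument of \cite{CN} for the Braverman--Kazhdan conjecture. The base case $n=1$ is trivial (since $B=G$), and the case $n=2$ recovers Example~\ref{sl_2 example} essentially. For the inductive step, it is enough to prove the $!$-version; the $*$-version then follows by Verdier duality (and the remark after Definition~\ref{def of central}). Equivalently, one needs to show that the pushforward $\pi_!\Phi_\mF$ along $\pi:G\to U\backslash G$ is supported on the closed stratum $T=U\backslash B\subset U\backslash G$.

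I would first use the Bruhat decomposition $G=\bigsqcup_w BwB$ to stratify $U\backslash G$, and then reduce the problem, cell by cell, to an orbit analysis under the mirabolic subgroup $P\subset\GL_n$ (the stabilizer of $e_n\in k^n$). Write $P=\GL_{n-1}\ltimes V$ with $V\cong\bG_a^{n-1}$. For $x\in G\setminus B$ whose $P$-orbit does not meet $B$, the unipotent radical $U$ contains a one-parameter subgroup mapping isomorphically onto a translate of $\bG_a$ under a character on which a relevant Artin--Schreier-type sheaf pulls back nontrivially, and the cohomology vanishes by the usual acyclicity. The remaining cases are those $x$ lying in $P\setminus (P\cap B)$.

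For such $x$, I would apply a partial Fourier transform along the abelian normal subgroup $V\subset P$ to the restriction of $\Phi_\mF$ to $Px$. The resulting complex on the smaller group $\GL_{n-1}$ is, up to twists, a Kloosterman-type convolution with a $\rW'$-equivariant complex $\mF'$ on the maximal torus $T'$ of $\GL_{n-1}$, where $\rW'=S_{n-1}\subset S_n=\rW$. The key step is to identify this with $\Phi_{\mF'}$, invoking the generalization of Deligne's lemma (Proposition~\ref{Deligne's lemma}) to extract the sign-isotypic piece for the symmetric group $S_{n-1}$-action that naturally appears on the Kloosterman fibers. Once this identification is in place, the inductive hypothesis applied to $\GL_{n-1}$ and $\mF'$ yields the desired vanishing on $Ux$.

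The main obstacle is the heart of the third step: one must verify that the complex $\mF'$ produced by the Fourier/mirabolic reduction is again central in the sense of Definition~\ref{def of central}, so that the inductive hypothesis applies. This requires a careful analysis of how the stabilizers $\rW_\chi$ and $\rW'_\chi$ and the corresponding sign-character constraint on $\oH^*_c(T,\mF\otimes\mL_\chi)$ restrict from $T$ to $T'$, together with a compatible identification of the $\rW'$-equivariant structures produced by the partial Fourier transform. The generalized Deligne lemma is precisely the tool that makes this compatibility manifest, by pinning down which isotypic component survives and matching it with the sign character on the smaller Weyl group.
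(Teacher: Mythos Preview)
Your overall shape—induction via the mirabolic subgroup, with the generalized Deligne lemma (Proposition~\ref{Deligne's lemma}) as the key new ingredient—matches the paper. But the specific mechanisms you describe in steps~5 and~6 are borrowed from the Cheng--Ng\^o argument for $\rho$-Bessel sheaves and do not survive the passage to arbitrary central complexes.

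The gap is in step~5. For a general central $\mF$ the sheaf $\Phi_\mF$ is not built from any Artin--Schreier sheaf, so there is no ``one-parameter subgroup on which a relevant Artin--Schreier-type sheaf pulls back nontrivially''. Likewise in step~6 there is no Kloosterman-type convolution to identify; that language is specific to $\Phi_{T,\rho}$. The paper replaces both of these $\rho$-Bessel-specific devices by a purely geometric argument: one stratifies $G$ not by Bruhat cells but by the mirabolic strata $X_m=\{g:\dim\on{span}(v,gv,g^2v,\ldots)=m\}$. On the open stratum $X_n\subset G^{\on{reg}}$ the Grothendieck--Springer resolution is Cartesian, and the Chevalley map $c$ restricts to an isomorphism $U_Qx\simeq\bbA^{n-1}\times\{\det x\}$ (Lemma~\ref{det}); this converts $\oH^*_c(U_Qx,\Phi_\mF)$ into the stalk of $(\det_!\mF)^{\rW}$, and the latter vanishes by the generalized Deligne lemma. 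On intermediate strata $X_m$ one uses the explicit orbit lemmas (Lemmas~\ref{linear algebra}--\ref{key lemma 2}) together with the Mackey formula $\Res^G_{L\subset P}\Phi_\mF\simeq\Phi_{L,\mF}$ to reduce to the Levi $L\simeq\GL_m\times\GL_{n-m}$.

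Finally, your ``main obstacle'' is not an obstacle in the paper's approach: no new $\mF'$ on a smaller torus is ever manufactured. One works with the \emph{same} $\mF$ throughout, merely restricting its equivariance to $\rW_L\subset\rW$; since $\rW_\chi^L\subset\rW_\chi^G$ and the sign character of $\rW_L$ is the restriction of that of $\rW$, centrality for $G$ immediately gives centrality for every Levi $L$. The Fourier transform and the Bruhat stratification play no role.
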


\quash{
Let $\pi:G\to U\backslash G$ be the quotient map.
It follows from proper base change that 
\eqref{statement} is equivalent to the statement that
the complex $\pi_!(\Phi_\mF)$ (resp. $\pi_*(\Phi_\mF)$) is supported on the closed subset 
$T=U\backslash B\subset U\backslash G$.
}

\begin{remark}\label{*=!}
Note that the Verdier duality $\bbD$ interchanges central objects with $*$-central objects
and $\oH_c^*(Ux,i^*\Phi_\mF)$ is dual to $\oH^*(Ux,i^!\bbD\Phi_\mF)$. Thus
the conjecture above for central objects implies the one for $*$-central objects
and vice versa.
\end{remark}

\begin{remark}
The acyclicity of Artin-Schreier sheaf is an essential ingredient in the proof 
that the $\ell$-adic Fourier transform is a $t$-exact equivalence of categories. This property of 
$\ell$-adic Fourier transform had found several applications in 
number theory and representation theory.
We expect the vanishing conjecture would also 
have applications in number theory and representation theory (see Section \ref{generalized BK conj} 
and \ref{Main results} below for applications in the
Braverman-Kazhdan conjecture).
\end{remark}
\begin{remark}\label{center}
Denote by $\sD_G(G)$ 
the $G$-conjugation equivariant derived category on $G$
and by $\sD_U(U\backslash G)$, the Hecke category of 
$U$-equivariant derived category on $U\backslash G$. 
Let $\pi:G\to U\backslash G$ be the quotient map.
The push-forward $\pi_!$ induces a functor 
\[\pi_!:\sD_G(G)\to \sD_U(U\backslash G),\] 
and it is known that 
for any $\mM\in\sD_G(G)$,
the image of $\pi_!(\mM)\in\sD_U(U\backslash G)$ carries a canonical central structure, that is, we have a canonical isomorphism 
$\pi_!(\mM)*_!\mG\is\mG*_!\pi_!(\mM)$ for any $\mG\in \sD_U(U\backslash G)$ (here  $*_!$ is the 
convolution product on the Hecke category with respect to 
shriek push-forward).
It follows from Conjecture \ref{vanishing conj}
that 
$\pi_!(\Phi_{\mF})\is\mF$ for any central complex $\mF$.
In particular, it implies that any central complex $\mF$ 
carries a canonical central structure. This explains
the origin of the name ''central complexes``.

\end{remark}

\begin{example}
Let $\mF=\tr_T^*\mL_\psi[1]$ be as in Example \ref{sl_2 example}.
We claim that $\Phi_\mF\is\tr^*\mL_\psi[\dim\SL_2]$.
Indeed, both complexes are isomorphic to the IC-extensions of their restrictions to 
the regular semi-simple locus
$\SL_2^{\on{rs}}$, and using the fact that Grothendieck-Springer simultaneous resolution~\eqref{G-S resolution}
is a Cartesian over $\SL_2^{\on{rs}}$, it is easy to show that 
$\Phi_\mF|_{\SL_2^{\on{rs}}}\is\tr^*\mL_\psi[\dim\SL_2]|_{\SL_2^{\on{rs}}}$.
Thus
the vanishing conjecture  becomes \eqref{vanishing sl_2}, which is exactly the 
acyclicity of Artin-Schreier sheaf.
\end{example}

\subsection{Braverman-Kazhdan conjecture}\label{generalized BK conj}
We recall a construction, due to Braverman and Kazhdan,
of $\rho$-Bessel sheaf $\Phi_{G,\rho}$ attached to
a $r$-dimensional 
complex representation $\rho$ of the complex dual group $\check G$.

Let $\rho:\check G\to\GL(V_\rho)$ be such a representation.
The restriction of $\rho$ to the maximal torus $\check T$ is diagonalizable and there exists 
a collection of weights 
\[\underline\lambda=\{\lambda_1,...,\lambda_r\}\subset\bX^\bullet(\check T):=\Hom(\check T,\bC^\times)\]
such that there is an eigenspace decomposition 
\[V_\rho=\bigoplus_{i=1}^r V_{\lambda_i}\]
of $V_\rho$, where $\check T$ acts on $V_{\lambda_i}$ via the character $\lambda_i$.
One can regard $\underline\lambda$ as collection of 
co-characters of $T$ using the the canonical isomorphism
$\bX^\bullet(\check T)\is\bX_\bullet(T)$,
and define
\[
\Phi_{T,\rho}=\pr_{\ul,!}\tr^*\mL_\psi[r]
\]
\[\Phi^*_{T,\rho}=\pr_{\ul,*}\tr^*\mL_\psi[r]
\]
where 
\[\pr_\ul:=\prod_{i=1}^r\lambda_i:\bG_m^r\longrightarrow T,\ \ \ \ \tr:\bG_m^r\longrightarrow\bG_a, (x_1,...,x_r)\ra\sum_{i=1}^r x_i.\]
It is shown in \cite{BK2}, using Deligne's result about 
symmetric group actions on hypergeometric sheaves (see Proposition \ref{sign}),
that both $\Phi_{T,\rho}$ and $\Phi^*_{T,\rho}$
carry natural 
$\rW$-equivariant structures and the resulting objects in 
$\sD_\rW(T)$, denote again by $\Phi_{T,\rho}$
and $\Phi^*_{T,\rho}$, are called
$\rho$-Bessel sheaves on $T$.\footnote{
In \cite{BK1,BK2}, the authors called
$\Phi_{T,\rho}$  $\gamma$-sheaves on $T$. However, based on the fact that 
the classical $\gamma$-function is the Mellin transform of the Bessel function, we follow \cite{Ngo} and use the 
term $\rho$-Bessel sheaves instead of $\gamma$-sheaves.}
The $\rho$-Bessel sheaves on $G$ attached to $\rho$, denoted by $\Phi_{G,\rho}$ and $\Phi^*_{G,\rho}$, are defined as 
\[\Phi_{G,\rho}=\Ind_{T\subset B}^G(\Phi_{T,\rho})^{\rW}\]
\[\Phi^*_{G,\rho}=\Ind_{T\subset B}^G(\Phi^*_{T,\rho})^{\rW}.\]

In \cite[Conjecture 9.12]{BK1},
Braverman-Kazhdan
proposed the following conjecture on acyclicity of $\rho$-Bessel sheaves over certain affine subspaces of $G$:
 \begin{conjecture}\label{BK conj}
Let $\Phi_{G,\rho}$ (resp. $\Phi^*_{G,\rho}$)
be the $\rho$-Bessel sheaf
attached to a representation 
$\rho:\check G\to\GL(V_\rho)$ of the dual group. 
Then for any $x\in G\setminus B$, 
we have the following cohomology vanishing
\beq
\ \ \ \ \ \ \ \ \ \ \oH_c^*(Ux,i^*\Phi_{G,\rho})=0\ \ \ \ (\text{resp.}\ \ 
\oH^*(Ux,i^!\Phi^*_{G,\rho})=0)
\eeq
where $i:Ux\to G$ is the natural inclusion map.
Equivalently, 
the complex $\pi_!(\Phi_{G,\rho})$ (resp. $\pi_*(\Phi_{G,\rho}^*)$) is supported on the closed subset 
$T=U\backslash B\subset U\backslash G$.
Here $\pi:G\to U\backslash G$ is the quotient map.

\end{conjecture}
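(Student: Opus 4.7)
The plan is to deduce Conjecture \ref{BK conj} from the vanishing Conjecture \ref{vanishing conj} by showing that the $\rho$-Bessel sheaf $\Phi_{T,\rho}$ is central and $\Phi^*_{T,\rho}$ is $*$-central in $\sD_\rW(T)$. Once this is established, the cohomology vanishing asserted in Conjecture \ref{BK conj} coincides verbatim with the conclusion of Conjecture \ref{vanishing conj} applied to $\mF=\Phi_{T,\rho}$ (resp.\ $\mF=\Phi^*_{T,\rho}$), since $\Phi_{G,\rho}=\Phi_{\Phi_{T,\rho}}$ and $\Phi^*_{G,\rho}=\Phi_{\Phi^*_{T,\rho}}$ by construction.

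To check centrality, fix $\chi\in\calC(T)(\barQ)$. Using the projection formula together with the multiplicativity of Kummer local systems (which decomposes the pullback $\pr_\ul^*\mL_\chi$ as an external product on $\bG_m^r$), one obtains
\[
\oH_c^*(T,\Phi_{T,\rho}\otimes\mL_\chi)\ \cong\ \oH_c^*\bigl(\bG_m^r,\ \tr^*\mL_\psi\otimes(\mL_{\chi_1}\boxtimes\cdots\boxtimes\mL_{\chi_r})\bigr)[r],
\]
where $\chi_i:=\lambda_i^*\chi$ is the Kummer parameter obtained by pulling $\chi$ back along the cocharacter $\lambda_i$. This is a hypergeometric/Kloosterman-type cohomology, and by construction the $\rW$-equivariant structure on $\Phi_{T,\rho}$ translates, after this base change, into the natural action of $\rW$ permuting the factors of $\bG_m^r$ via its action on the multiset $\ul=\{\lambda_1,\ldots,\lambda_r\}$. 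For a reflection $s_\alpha\in\rW_\chi$ sending $\lambda_i$ to $\lambda_j$, the relation $\lambda_i-\lambda_j\in\bZ\check\alpha$ combined with triviality of $\check\alpha^*\mL_\chi$ (the defining property of $\rW_\chi$) forces $\chi_i=\chi_j$, so the reflection merely swaps two identical Kummer factors.

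Having reduced to a permutation on pairs of identical factors, the next step is to invoke Proposition \ref{Deligne's lemma} -- the generalization of Deligne's theorem on symmetric group actions on Kloosterman sheaves -- which asserts that the transposition of two identical Kummer factors in the hypergeometric cohomology above acts by $-1$. Running this over a set of reflections generating $\rW_\chi$, one concludes that $\rW_\chi$ acts on $\oH_c^*(T,\Phi_{T,\rho}\otimes\mL_\chi)$ through $\sign_\rW$, establishing centrality. The $*$-central statement for $\Phi^*_{T,\rho}$ then follows either by the parallel argument for $\oH^*$ or, more economically, by Verdier duality as in Remark \ref{*=!}.

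The main obstacle in this strategy is the invocation of Proposition \ref{Deligne's lemma} itself. Deligne's classical result handles the full symmetric group $S_r$ acting on a Kloosterman sheaf in which \emph{all} Kummer parameters coincide; here, by contrast, the parameters $\chi_1,\ldots,\chi_r$ are only forced to agree in pairs (and only along reflections in $\rW_\chi$), and the group whose sign action needs to be verified is $\rW_\chi$ rather than $S_r$. The technical heart of the argument is to extend Deligne's sign computation to this equivariant context, carefully tracking how the $\rW$-equivariant structure on $\Phi_{T,\rho}$ -- itself built via Deligne's original theorem in \cite{BK2} -- interacts with the reflections in $\rW_\chi$ after the reduction to $\bG_m^r$, and to confirm that the resulting representation of $\rW_\chi$ is the restriction of $\sign_\rW$ and not some other linear character of $\rW_\chi$.
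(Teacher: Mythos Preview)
Your high-level plan is exactly the paper's: one proves Theorem~\ref{main 1} (that $\Phi_{T,\rho}$ is central and $\Phi^*_{T,\rho}$ is $*$-central) and then Conjecture~\ref{BK conj} is literally Conjecture~\ref{vanishing conj} specialized to $\mF=\Phi_{T,\rho}$, $\Phi^*_{T,\rho}$. The reduction to $\bG_m^r$ via the projection formula is also correct and is what the paper does.

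However, the technical core of your argument has a genuine error: you invoke Proposition~\ref{Deligne's lemma}, but that proposition \emph{assumes} $\mF$ is already a central complex and concerns the $\rW_j$-action on a pushforward $\eta_!\mF$; using it to establish centrality of $\Phi_{T,\rho}$ is circular. What you actually need (and what your verbal description matches) is Lemma~\ref{S_r-action}: the stabilizer $\rS_{r,\chi'}\subset\rS_r$ of a Kummer parameter $\chi'$ acts on $\oH_c^*(\bG_m^r,\tr^*\mL_\psi\otimes\mL_{\chi'})$ by $\sign_r$. This is proved directly in the paper by reducing to a single transposition, passing to the quotient $\sigma\backslash\backslash\bG_m^r\cong\bbA^1\times\bG_m\times\bG_m^{r-2}$, and using the acyclicity of $\mL_\psi$ on the $\bbA^1$ factor; no centrality is assumed.

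Your bookkeeping of the $\rW$-action is also somewhat imprecise. A reflection $s_\alpha$ does not in general act as a single transposition on the weight multiset $\ul$; rather, the paper tracks the $\rW$-equivariant structure through the auxiliary extension $\rW'=\{(w,\eta)\in\rW\times\rS'_\ul:\rho(w)=\pi_k(\eta)\}$ of Section~\ref{gamma sheaves}. The key point is that for $w\in\rW'_\chi$ and any lift $(w,\eta)\in\rW'$, the commutativity of $\pr_\ul$ with the $\rW'$-actions forces $\eta\in\rS_{r,\chi'}$ (where $\chi'=\pr_\ul^*\chi$), so Lemma~\ref{S_r-action} gives that $\eta$ acts by $\sign_r(\eta)$. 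Since the $\rW$-structure on $\Phi_{T,\rho}$ was \emph{defined} with the twist $i_{w'}=\sign_\rW(w)\sign_r(\eta)i'_{w'}$, the net action of $w$ is $\sign_\rW(w)$, and this works for all of $\rW'_\chi$, not only $\rW_\chi$. Your attempt to argue directly that $s_\alpha\in\rW_\chi$ pairs off identical Kummer factors is morally right (indeed $s_\alpha(\lambda_i)-\lambda_i\in\bZ\check\alpha$ together with triviality of $\check\alpha^*\mL_\chi$ forces equality of the corresponding $\chi_i$), but to make it precise you would still need the $\rS_\ul$-quotient mechanism that the paper builds in.
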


\begin{remark}
In \emph{loc.cit.} Conjecture \ref{BK conj} was stated for 
those representations $\rho$ with 
$\sigma$-positive weights 
(see Section \ref{Hypergeometric sheaves} for the definition of 
$\sigma$-positive weights).
It is shown in \cite{CN,BK2} that, under this positivity assumption, 
the $\rho$-Bessel sheaves $\Phi_{G,\rho}$ and $\Phi^*_{G,\rho}$ (resp. $\Phi_{T,\rho}$ and $\Phi^*_{T,\rho}$) 
are in fact perverse sheaves and
we have 
$\Phi_{G,\rho}\is\Phi^*_{G,\rho}$ (resp. 
$\Phi_{T,\rho}\is\Phi^*_{T,\rho}$) (see Proposition \ref{cleanness of gamma}).
This is a generalization of Deligne's theorem on Kloosterman sheaves \cite{D}.
We will see below that 
the vanishing conjecture (Conjecture \ref{vanishing conj}) implies the Braverman-Kazhdan conjecture for $\rho$-Bessel sheaves attached to
arbitrary representation $\rho$ of the dual group $\check G$.
\end{remark}

\begin{remark}
For a motivated introduction to the Braverman-Kazhdan conjecture and 
its role in the Langlands program see \cite{BK1,BK2,Ngo}
\end{remark}

\subsection{Main results}\label{Main results}
The following is the first main result of the paper.
Let $\Phi_{T,\rho}$ (resp. $\Phi^*_{T,\rho}$)
be the $\rho$-Bessel sheaf on $T$ attached to a representation 
$\rho:\check G\to\GL(V_\rho)$
of the dual group.
\begin{thm}\label{main 1}
For any tame character 
$\chi$ of $T$, the stabilizer subgroup $\rW'_\chi$ acts on 
\[\ \ \ \ \ \ \oH_c^*(T,\Phi_{T,\rho}\otimes\mL_\chi)  \ \ \ \ (\text{resp.}\ \  \oH^*(T,\Phi^*_{T,\rho}\otimes\mL_\chi))\] via the sign character $\sign_\rW$. 
In particular, the 
$\rho$-Bessel sheaf $\Phi_{T,\rho}$ (resp. $\Phi^*_{T,\rho}$)
 is central (resp. $*$-central)

\end{thm}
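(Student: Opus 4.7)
The plan is to reduce the cohomology $\oH_c^*(T,\Phi_{T,\rho}\otimes\mL_\chi)$ to a hypergeometric cohomology on $\bG_m^r$, and then invoke Deligne's result on sign actions of symmetric groups on such cohomology (Proposition~\ref{sign}). First, applying the projection formula to the proper map $\pr_\ul:\bG_m^r\to T$, together with the definition $\Phi_{T,\rho}=\pr_{\ul,!}\tr^*\mL_\psi[r]$, yields
\[
\oH_c^*(T,\Phi_{T,\rho}\otimes\mL_\chi)\ \simeq\ \oH_c^*\bigl(\bG_m^r,\ \tr^*\mL_\psi\otimes(\mL_{\chi_1}\boxtimes\cdots\boxtimes\mL_{\chi_r})\bigr)[r],
\]
where $\chi_i:=\chi\circ\lambda_i$ is the tame character of $\bG_m$ pulled back along the $i$-th cocharacter. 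The right-hand side is precisely the hypergeometric cohomology attached to the characters $\chi_1,\dots,\chi_r$.

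Next, I would decode the $\rW$-equivariant structure. The $\rW$-action on the multiset $\ul=\{\lambda_1,\dots,\lambda_r\}$, which is $\rW$-stable because $\ul$ is the weight multiset of a $\check G$-representation, produces a homomorphism $\sigma:\rW\to S_r$ (well-defined up to the choice of ordering within each multiplicity class). Since $\pr_\ul\circ\sigma_w=\pr_\ul$ and $\tr$ is symmetric, the complex $\pr_{\ul,!}\tr^*\mL_\psi$ carries a natural $S_r$-action, and the $\rW$-equivariant structure on $\Phi_{T,\rho}$ of \cite{BK2} is obtained by composing $\sigma$ with this $S_r$-action and applying Deligne's sign twist so that it becomes canonical (i.e., independent of the auxiliary ordering).

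For $w\in\rW'_\chi$, the condition $w\cdot\chi=\chi$ unpacks as $\chi_{\sigma_w(i)}=\chi_i$ for all $i$, so $\sigma_w$ lies in the stabilizer subgroup $\on{Stab}(\{\chi_i\})\subset S_r$, which decomposes as a product $\prod_\mu S_{I_\mu}$ indexed by the distinct characters $\mu$ appearing among the $\chi_i$'s. Proposition~\ref{sign} asserts that this stabilizer acts on the hypergeometric cohomology above by the restriction of the sign character of $S_r$. Combining this action with the Deligne sign twist built into the $\rW$-equivariant structure of $\Phi_{T,\rho}$, the resulting action of $w$ on $\oH_c^*(T,\Phi_{T,\rho}\otimes\mL_\chi)$ is precisely $\sign_\rW(w)$. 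The corresponding $*$-statement for $\Phi^*_{T,\rho}$ is entirely parallel, using $\pr_{\ul,*}$ and ordinary cohomology in place of $\pr_{\ul,!}$ and compactly supported cohomology.

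The main obstacle will be the sign bookkeeping in the last step: carefully matching Deligne's sign convention in Proposition~\ref{sign} with the sign character $\sign_\rW$, especially when the weights have multiplicities and $\sigma:\rW\to S_r$ is only well-defined up to permutations within multiplicity classes. A useful sanity check is the case of trivial $\chi$, where $\rW'_\chi=\rW$ and all $\chi_i$ are trivial; there the claim reduces to the statement, built into the BK construction, that $\rW$ acts on $\oH_c^*(T,\Phi_{T,\rho})$ by $\sign_\rW$, which is forced by the choice of sign twist in the definition of the $\rW$-equivariant structure.
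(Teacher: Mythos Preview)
Your strategy is the paper's: reduce via the projection formula to a symmetric-group action on $\oH_c^*(\bG_m^r,\tr^*\mL_\psi\otimes\mL_{\chi'})$, where $\chi'=\pr_\ul^*\chi$, and show that action is by the sign character. Two points in your write-up need correction, though neither breaks the approach.

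First, $\pr_\ul$ is not proper (the projection formula for $f_!$ holds regardless), and more seriously $\pr_\ul\circ\sigma_w\ne\pr_\ul$: the correct identity is $\pr_\ul\circ\sigma_w=w\circ\pr_\ul$ with $w$ acting on $T$. Hence $\pr_{\ul,!}\tr^*\mL_\psi$ does \emph{not} carry an $S_r$-action; what it carries is a $\rW'$-equivariant structure, where $\rW'$ is the extension of $\rW$ by $S_\ul$ described in Section~\ref{gamma sheaves}. The paper works with $\rW'$ throughout rather than a choice of splitting $\rW\to S_r$, which makes the independence-of-choices and the sign twist in~\eqref{i_w'} transparent.

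Second, Proposition~\ref{sign} concerns the $S_\ul$-action on the \emph{sheaf} $\Phi_\ul$, not the action of the stabilizer $S_{r,\chi'}$ of a character tuple on twisted cohomology; these are different subgroups of $S_r$. The paper isolates the statement you actually need as a separate Lemma~\ref{S_r-action}: for any tame $\chi'\in\calC(\bG_m^r)(\barQ)$, the group $S_{r,\chi'}$ acts on $\oH_c^*(\bG_m^r,\tr^*\mL_\psi\otimes\mL_{\chi'})$ by $\sign_r$. This is proved directly by passing to the quotient by a transposition and reducing to $\oH_c^*(\bbA^1,\mL_\psi)=0$. With this lemma in hand, the sign bookkeeping you flag as the main obstacle becomes one line: the $\rW$-action was defined as $i_w=\sign_\rW(w)\sign_r(\eta)\,i'_{w'}$ for any lift $w'=(w,\eta)\in\rW'$, and $i'_{w'}$ acts on the cohomology by $\sign_r(\eta)$ by the lemma, so the product is exactly $\sign_\rW(w)$.
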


\begin{corollary}\label{vanishing implies BK}
Conjecture \ref{vanishing conj} implies Conjecture \ref{BK conj}

\end{corollary}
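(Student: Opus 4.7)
The plan is to observe that this corollary is essentially a direct deduction once Theorem \ref{main 1} is in place, since the latter theorem precisely establishes that the inputs to the Braverman-Kazhdan construction fall into the class of complexes covered by the vanishing conjecture.

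First I would match up notation. The sheaves $\Phi_{G,\rho}$ and $\Phi^*_{G,\rho}$ defined in Section \ref{generalized BK conj} are, by construction,
\[
\Phi_{G,\rho} = \Ind_{T\subset B}^G(\Phi_{T,\rho})^{\rW} = \Phi_{\mF}\ \text{with}\ \mF=\Phi_{T,\rho},
\]
and likewise $\Phi^*_{G,\rho}=\Phi_{\mF'}$ with $\mF' = \Phi^*_{T,\rho}$, in the notation of Section \ref{statement of vanishing conj}. Thus the cohomology vanishing statements in Conjecture \ref{BK conj} are literally the statements in Conjecture \ref{vanishing conj} applied to $\mF=\Phi_{T,\rho}$ (resp.\ $\Phi^*_{T,\rho}$).

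Next I would invoke Theorem \ref{main 1}, whose conclusion is that the $\rW_\chi'$-action on $\oH^*_c(T,\Phi_{T,\rho}\otimes \mL_\chi)$ (resp.\ on $\oH^*(T,\Phi^*_{T,\rho}\otimes\mL_\chi)$) is through $\sign_{\rW}$. Restricting this action to the normal subgroup $\rW_\chi\subset\rW_\chi'$ (recall this is the subgroup generated by reflections $s_\alpha$ with $(\check\alpha)^*\mL_\chi$ trivial), we conclude that $\rW_\chi$ also acts by $\sign_{\rW}$. This is exactly the defining condition in Definition \ref{def of central}, so $\Phi_{T,\rho}$ is central and $\Phi^*_{T,\rho}$ is $*$-central.

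Therefore the hypotheses of Conjecture \ref{vanishing conj} apply, and invoking it for $\mF=\Phi_{T,\rho}$ and $\mF'=\Phi^*_{T,\rho}$ yields the desired vanishing $\oH^*_c(Ux,i^*\Phi_{G,\rho})=0$ and $\oH^*(Ux,i^!\Phi^*_{G,\rho})=0$ for all $x\in G\setminus B$. There is no real obstacle in this deduction: all the work has been packaged into Theorem \ref{main 1}, whose proof (providing the refined statement about the \emph{full} stabilizer $\rW_\chi'$, not just $\rW_\chi$) is the substantive point and will be carried out separately using the generalization of Deligne's result on symmetric group actions mentioned in the introduction.
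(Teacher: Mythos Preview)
Your proposal is correct and matches the paper's approach exactly: the paper gives no separate proof of this corollary, as it is immediate from Theorem \ref{main 1} (which shows $\Phi_{T,\rho}$ is central and $\Phi^*_{T,\rho}$ is $*$-central) together with the observation that $\Phi_{G,\rho}=\Phi_{\Phi_{T,\rho}}$ and $\Phi^*_{G,\rho}=\Phi_{\Phi^*_{T,\rho}}$.
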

\begin{remark}\label{new context}
Theorem \ref{main 1} and 
Corollary \ref{vanishing implies BK} put the Braverman-Kazhdan conjecture in a wider context: it is a special case of a more general vanishing conjecture 
whose formulation does not involve representations of the dual group.
\end{remark}

Here is the second main result of the paper.
\begin{thm}\label{main 2}
Conjecture \ref{vanishing conj} is true for $G=\GL_n$.
\end{thm}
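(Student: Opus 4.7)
The plan is to adapt the mirabolic-induction proof of the Braverman--Kazhdan conjecture for $\GL_n$ given in \cite{CN} to the considerably broader class of central complexes, replacing the ad-hoc symmetric-group calculation used there by the generalized Deligne lemma (Proposition \ref{Deligne's lemma}). Throughout I use Remark \ref{*=!} to concentrate on the central (as opposed to $*$-central) case.

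\textbf{Step 1 (Induction on $n$ and mirabolic setup).} The statement is trivial for $n=1$ since $G=B$. For the inductive step, let $P = \GL_{n-1}\ltimes V \subset \GL_n$ be the mirabolic subgroup, where $V\cong\bbG_a^{n-1}$ is the column above the lower right corner. Fix $x\in G\setminus B$. By translating along $T$ and by the Bruhat decomposition one reduces to finitely many model cases indexed by nontrivial Weyl elements $w$. Split these into two families according to whether $w$ fixes the last basis vector (hence $w\in W_{n-1}\subset W_n$, for which the induction hypothesis on $\GL_{n-1}$ will apply after restricting along $P\hookrightarrow G$) or whether $w$ moves the last basis vector (for which a direct mirabolic computation will apply).

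\textbf{Step 2 (Restricting the induced complex along $P$).} Using the Grothendieck--Springer resolution, analyze the restriction $\Phi_\mF|_{Ux}$. Because $P\cap B$ is exactly the Borel $B_{n-1}\cdot V$ of $P$, induction from $T\subset B$ to $\GL_n$ factors compatibly through induction from $T\subset B_{n-1}$ inside $\GL_{n-1}$ together with pullback/pushforward along $V$. After applying proper base change along the inclusion $Ux\hookrightarrow G$, the sheaf $i^*\Phi_\mF$ appears as a convolution of an induced complex on $\GL_{n-1}$ with an Artin-Schreier factor on $V$. The $\rW_n$-equivariant structure on $\mF$ restricts to a $\rW_{n-1}$-equivariant structure on the $\GL_{n-1}$-input, and the $\rW_n$-invariants decompose accordingly into $\rW_{n-1}$-invariants plus a sign-character contribution from cosets $\rW_n/\rW_{n-1}$.

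\textbf{Step 3 (Two-term cohomological computation).} The cohomology $\oH_c^*(Ux,i^*\Phi_\mF)$ then decomposes as a product of two factors: (a) an integral over $V$ which, by the projection formula, produces a standard Artin-Schreier pairing; (b) a remaining cohomology over a smaller unipotent inside $\GL_{n-1}$. For $w\in W_{n-1}$ the factor (b) vanishes directly by the induction hypothesis. For $w\notin W_{n-1}$ the analysis of (a) plus (b) collapses, after Mellin decomposition of $\mF$, to cohomology groups of the shape
\[
\oH_c^*\bigl(T,\ \mF\otimes \mL_\chi \otimes (\text{Artin-Schreier/hypergeometric factor})\bigr),
\]
on which a subgroup of $\rW_\chi$ acts both through the intrinsic central structure on $\mF$ and through a geometric action coming from the fibration.

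\textbf{Step 4 (Sign-character cancellation).} By centrality of $\mF$, the $\rW_\chi$-action on $\oH_c^*(T,\mF\otimes\mL_\chi)$ is by $\sign_{\rW}$. Proposition \ref{Deligne's lemma} provides the analogous sign-character statement once the Artin-Schreier/hypergeometric twist is included. Because $\Phi_\mF=\Ind_{T\subset B}^G(\mF)^{\rW}$ is formed by taking $\rW$-invariants, and the sign representation has no $\rW_\chi$-invariants as soon as $\rW_\chi$ is nontrivial, the combined factor vanishes; when $\rW_\chi$ is trivial the residual term is handled directly because the underlying Artin-Schreier integral is already acyclic (as in the base case~\eqref{vanishing sl_2}). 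This completes the induction.

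\textbf{Main obstacle.} The principal difficulty is Step 2: one must match, with full equivariance, the $\rW_n$-structure inherited from the induction picture on $\GL_n$ with the $\rW_\chi$-structure on the cohomology groups that appears in the definition of centrality. Only once this compatibility is pinned down can Proposition \ref{Deligne's lemma} be invoked as a black box to produce the sign-character action. A secondary subtlety is that the mirabolic reduction must cover every Bruhat cell outside $B$, including the shortest nontrivial ones, for which a separate direct argument using the simple-reflection case (essentially Example \ref{sl_2 example}) is needed as the true base of the induction.
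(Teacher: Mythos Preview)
Your proposal gestures at the correct ingredients (mirabolic subgroup, induction, Proposition~\ref{Deligne's lemma}) but organizes them around the wrong stratification, and as a result the central computation never comes into focus.

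The paper does \emph{not} split $G\setminus B$ by Bruhat cells $BwB$. Instead it first replaces the unipotent radical $U$ by the unipotent radical $U_Q$ of the mirabolic $Q$ (using the Leray spectral sequence for $Ux\to U_Q\backslash Ux$ at the very end), and proves the stronger statement that $\oH^*_c(U_Qx,\Phi_\mF)=0$ for every $x\notin Q$. The relevant stratification of $G$ is by the dimension $m$ of the cyclic span $\mathrm{span}\{v,xv,x^2v,\dots\}$, giving strata $X_m$ with $X_1=Q$. For $x$ in the open stratum $X_n$ one has the key geometric lemma: the Chevalley map $c:G\to \rW\backslash\backslash T\cong\bbA^{n-1}\times\bG_m$ restricts to an \emph{isomorphism} of $U_Qx$ onto a fibre of the projection to $\bG_m$ (the determinant coordinate). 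Consequently $\oH^*_c(U_Qx,\Phi_\mF)$ is a stalk of $(\det_!\mF)^{\rW}$, and Proposition~\ref{Deligne's lemma} says the $\rW$-action on $\det_!\mF$ is by the sign character, so the invariants vanish. For $x\in X_m$ with $1<m<n$ one conjugates into companion form and uses a further geometric lemma identifying $U_Qx\times U_{m-1}$ with $U_{P_m}U_{Q_{L_m}}x$; combined with the Mackey formula $\Res^G_{L_m\subset P_m}\Phi_\mF\cong\Phi_{L_m,\mF}$ this reduces to the open-stratum computation on the smaller Levi $L_m$.

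None of this is visible in your outline. In particular: (i) the case split ``$w\in W_{n-1}$ versus $w\notin W_{n-1}$'' does not align with $x\in Q$ versus $x\notin Q$, so your inductive hypothesis is not being applied where it is actually available; (ii) Step~2's ``induction factors through $\GL_{n-1}$ together with pullback/pushforward along $V$'' is not the Mackey formula and does not give a usable description of $\Phi_\mF|_{Ux}$; (iii) Step~3's ``Mellin decomposition of $\mF$'' is not a valid operation for general (non-monodromic) $\mF$, and there is no stage in the argument at which one ever picks a specific $\chi$ --- the role of centrality is entirely packaged inside Proposition~\ref{Deligne's lemma}, applied to the pushforward along $\det$. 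The genuine missing idea is the determinant-fibre lemma for $U_Qx$ on the open mirabolic stratum; once you have it, Steps~3--4 collapse to a one-line application of Proposition~\ref{Deligne's lemma} with no Mellin transform in sight.
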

Now Corollary \ref{vanishing implies BK}
immediately implies:
\begin{corollary}\label{BK for GL_n}
Conjecture \ref{BK conj} is true for $G=\GL_n$.
\end{corollary}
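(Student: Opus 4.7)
The corollary is an immediate consequence of the two preceding results: Theorem \ref{main 2} establishes Conjecture \ref{vanishing conj} for $G = \GL_n$, and Corollary \ref{vanishing implies BK} says that Conjecture \ref{vanishing conj} implies Conjecture \ref{BK conj}; chaining these yields the statement. Thus the substantive input is Theorem \ref{main 2}, for which I sketch a strategy below.

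By Remark \ref{*=!}, Verdier duality reduces everything to the central case: show that for any central $\mF \in \sD_\rW(T)$ and any $x \in \GL_n \setminus B$, one has $\oH_c^*(Ux, i^*\Phi_\mF) = 0$. The plan is to induct on $n$ using the mirabolic subgroup $P \subset \GL_n$ of matrices whose last row is $(0,\ldots,0,1)$, adapting the blueprint of \cite{CN} from $\rho$-Bessel sheaves to arbitrary central complexes. After using the Bruhat decomposition to place $x$ in a cell $BwB$ with $w \neq 1$, I would fiber $Ux$ over $\GL_n / P \is \bbA^n \setminus \{0\}$, peel off the last coordinate direction, and reduce the cohomology integral on the generic stratum to a computation over $\GL_{n-1}$ paired with an integral over a one-dimensional torus, to which the inductive hypothesis can be applied.

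The key new input absent from \cite{CN} is the generalized Deligne lemma (Proposition \ref{Deligne's lemma}), which replaces Deligne's classical theorem that the symmetric group acts on Kloosterman sheaves via the sign character. At the terminal step of the induction one obtains a cohomology group carrying an action of some subgroup $\rW_\chi \subset \rW$ induced by the Weyl-equivariance of $\mF$; the generalized Deligne lemma forces this action to factor through $\sign_\rW$. Combined with the defining property of a central complex, namely that $\rW_\chi$ already acts by $\sign_\rW$ on $\oH_c^*(T, \mF \otimes \mL_\chi)$ for every tame $\chi$, the two sign conditions are arranged, via a Mellin decomposition over $\calC(T)(\barQ)$, so that the only compatible value for the target cohomology is zero.

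The main obstacle I anticipate is the bookkeeping required to keep the $\rW$-equivariant structure and the stratification of $\calC(T)(\barQ)$ by the stabilizers $\rW_\chi \subset \rW'_\chi$ compatible with the mirabolic recursion; in particular, I would need to verify that the restriction procedure intertwining $\GL_n$ and $\GL_{n-1}$ sends central complexes on the torus of $\GL_n$ to central complexes on the torus of $\GL_{n-1}$, so that the inductive hypothesis is genuinely available. A secondary subtlety is tracking the sign character through the various geometric identifications so that the generalized Deligne lemma can be invoked cleanly at the base of the induction, where in the $\rho$-Bessel case of \cite{CN} one could rely on the explicit Kloosterman/hypergeometric geometry.
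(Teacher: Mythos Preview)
Your first paragraph is correct and matches the paper exactly: the corollary is immediate from Theorem \ref{main 2} and Corollary \ref{vanishing implies BK}, with no further argument needed.

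Since you also sketch Theorem \ref{main 2}, a few comparisons with the paper's actual proof are in order. The broad outline (mirabolic recursion plus Proposition \ref{Deligne's lemma} as the replacement for Deligne's Kloosterman result) is right, but the mechanics differ. The paper does not use the Bruhat decomposition or fiber $Ux$ over $G/Q$; it uses the $Q$-equivariant stratification $G=\bigsqcup_m X_m$ by the dimension of the span of $v,gv,g^2v,\ldots$ (Section \ref{mirabolic}), together with the key geometric input that on the open stratum $X_n$ the Chevalley map restricts to an isomorphism $U_Qx\is\bbA^{n-1}\times\{\det x\}$ (Lemma \ref{det}). The generalized Deligne lemma is then applied on this open stratum to show $\rW_j$ acts by sign on $(\id\times\det)_!\mF$, not at a terminal base case.

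More importantly, your anticipated obstacle---checking that restriction sends central complexes on the torus of $\GL_n$ to central complexes on the torus of $\GL_{n-1}$---does not arise and would be the wrong framing. The paper's induction (Proposition \ref{reduction 2}) is stated for products $\prod_i\GL_{n_i}$ all sharing the \emph{same} maximal torus $T$ and the \emph{same} central complex $\mF\in\sD_\rW(T)$; only the ambient group shrinks, via the Mackey formula (Proposition \ref{Mackey formula}), which identifies $\Res_{L\subset P}^G\Phi_\mF$ with $\Ind_T^L(\mF)^{\rW_L}$. So centrality is tested once, against the full Weyl group of the original torus, and never needs to be re-verified on a smaller torus. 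You also omit the preliminary reduction to perverse central sheaves (Lemma \ref{reduction to perverse sheaf}), which uses exactness of the Mellin transform and the fact that $\rW_\chi=\rW_\chi'$ for $\GL_n$.
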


Conjecture \ref{BK conj} for $\GL_n$ was proved by Cheng and Ng\^o 
\cite[Theorem 2.4]{CN} under some assumption on $\rho$.
The proof of Conjecture \ref{vanishing conj} for $\GL_n$, and hence Conjecture \ref{BK conj} for $\GL_n$
and arbitrary $\rho$, 
follows the one in \cite{CN}
using mirabolic subgroups.
However, in order to deal with the more general case of 
central complexes, one needs 
a new ingredient: a generalization of Deligne's result of 
symmetric group action on hypergeometric sheaves 
to the setting of central complexes (see Proposition \ref{Deligne's lemma}).

The discussions in the previous sections
have obvious counterpart in the de Rham setting, where 
Artin-Schreier sheaf $\mL_\psi$ is replaced by the exponential $D$-module $e^x$
and 
$\ell$-adic sheaves are replaced by holonomic $D$-modules. 
The proof of the main results that will be given in this paper
is entirely geometric and hence can be also applied to the de Rham setting.

\subsection{Whittaker categories}
The de Rham counterpart of the category of $*$-central perverse sheaves on $T$, to be called the category of $*$-central $D$-modules on $T$, 
also appears in the recent works of 
Ginzburg and Lonergan \cite{G,L} on
Whittaker $D$-modules, nil Hecke algebras, and quantum Toda lattices.
To be specific, in~\emph{loc.cit.} the authors proved that 
the category of Whittaker $D$-modules on $G$, denoted by
$\on{Whit}(G)$, is equivalent to
a certain full subcategory of the category of $\rW$-equivariant 
$D$-modules on $T$. 
It turns out that, as proven in \cite[Theorem 1.8]{C}, the later full subcategory, and
hence the Whittaker category $\on{Whit}(G)$, is equivalence to the 
categroy of $*$-central $D$-modules on $T$.
It would be interesting to establish similar results in the $\ell$-adic setting
and use it to give a description of $\ell$-adic counterpart of the Whittaker category in terms of 
$\rW$-equivariant sheaves on the maximal torus $T$.\footnote{
An question raised by V.Drinfeld according to \cite[Section 1.5]{G}.}

The equivalence bewteen $\on{Whit}(G)$ and the category of $*$-central $D$-modules on $T$ 
gives rise to a 
functor  $\on{Whit}(G)\to\Dmod(G\backslash G)$, where 
$\Dmod(G\backslash G)$ is the category of $G$-conjugation equivariant 
$D$-modules on $G$. 
In \cite{BG}, Ben-Zvi and Gunningham gave another construction of a functor from 
$\on{Whit}(G)$ to $
\Dmod(G\backslash G)$ and they conjectured that 
the essential image of their functor satisfies the same acyclicity in~\eqref{statement}, see \cite[Conjecture 2.9 and 2.14]{BG}.
Our results might be useful for studying their conjecture.

\subsection{Recent developments}
In the recent work \cite{C}, 
we established the vanishing conjecture for general reductive group $G$ and hence,
by Corollary \ref{vanishing implies BK}, the Braverman-Kazhdan 
conjecture for almost all characteristics.
The proof given in~\emph{loc. cit.} is different form the one given here: 
it first established the vanishing conjecture in the de Rham setting using 
the theory of Harish-Chandra bimodules and character $D$-modules and 
then deduced the positive characteristic case using 
character sheaves in mixed-characteristic and a spreading out argument.
The proof makes use of Harish-Chandra bimodules and hence is algebraic.
It will be interesting to have a geometric proof of the vanishing conjecture for 
general reductive groups, like the proof given here for $\GL_n$,
which treats the case 
of various ground fields and sheaf theories uniformly.

In the recent work \cite{LL}, G. Laumon and E. Letellier established 
the function theoretical version
of
 Braverman-Kazhdan conjecture for all reductive groups $G$.

\quash{
$It will be interesting to establish a similar result in the setting of $\ell$-adic sheaves.

Let $\calS(T)$ be the set of isomorphism classes of 
pair \[(\mL,\iota)\]
where $\mL$ is a
rank one local system on $T$ and $\iota:\barQ\is\mL_e$
is a rigidification of $\mL$ at the identity $e\in T$ (here $\mL_e$ is the fiber of $\mL$ at $e$), such that 
there exists an integer $n\geq 1$, prime to $p$, with 
$(\mL^{\otimes n} ,\iota^{\otimes n})\is (\bar{\mathbb Q}_{\ell,T},1)$.
Here $1:\bar{\mathbb Q}_{\ell}\is\bar{\mathbb Q}_{\ell,T,e}$ is the identity map. We have an isomorphism 
\beq\label{universal cover}
\calS(T)\is X(T)\otimes(\mathbb Q'/\bbZ),
\eeq
where $\mathbb Q'=\{\frac{m}{n}\in\mathbb Q|m,n\in\bbZ, (n,p)=1\}$.
Objects in $\calS(T)$ are called rank one tame local systems on $T$.  
If no confusion will arise, we will simply write $\mL=(\mL,\iota)\in\calS(T)$.

We have the following known lemma (see for example \cite[Section 3]{MS}):
\begin{lemma}\label{stabilizer}
(1)
Let $\mL\in\calS(T)$ be a tame local system and let $v\in X(T)\otimes\mathbb Q'$
be a representative of $\mL$ under the isomorphism in~\eqref{universal cover}. 
Let $\rW_{a,v}$ and $\rW^{ex}_{a,v}$ be the stabilizer of $v$ in the 
affine Weyl group and  extended affine Weyl group respectively.
We have canonical isomorphisms 
\[\rW_\mL\is\rW_{a,v},\ \ \rW'_\mL\is\rW^{ex}_{a,v}.\]
(2)
We have $\rW_\mL=\rW'_\mL$ if the center of $G$ is connected 

\end{lemma}
}
\quash{
In \cite{C1}, I show that the Braverman-Kazhdan conjecture holds 
in the de-Rham setting
(i.e., the de-Rham version of
Conjecture \ref{BK conj} in the case of a general reductive group $G$ and 
a representation $\rho$ of 
the dual group $\check G$ with $\sigma$-positive weights).
It turns that the argument in \emph{loc. cit.} can be applied to the 
more general setting of central complexes, hence provides a proof of the 
vanishing conjecture in this setting (see \cite{C2}).\footnote{An important ingredient in the argument in \cite{C1,C2} is the equivalence between 
Drinfeld center of Harish-Chandra bimodules (or categorical center of the 
categorical Hecke algebra) and character $D$-modules (see, e.g., \cite{BFO,BN}), which is only available at the moment in the de Rham setting.}.
This provides evidence for Conjecture \ref{vanishing conj}.
}
\subsection{Organization}
We briefly summarize here the main goals of each section.
In Section \ref{Notations} we collect standard notation
in algebraic geometry and $\ell$-adic sheaves.
In Section \ref{ind and res} we study induction and restriction functors 
for $\ell$-adic sheaves on reductive groups.
In Section \ref{central complexes} we give a 
characterization of central complexes using the $\ell$-adic Mellin transform 
and use it to
construct examples tame central (resp. $*$-central) local systems on $T$.
In Section \ref{gamma sheaves} we prove Theorem \ref{main 1}.
In Section \ref{GL_n case} we prove Theorem \ref{main 2}.
\\

{\bf Acknowledgement.}
The paper is inspired by the 
lectures given by 
Ginzburg and Ng\^o on their 
works~\cite{G} and~\cite{CN}.
I thank Roman Bezrukavnikov,
Tanmay Deshpande,
Victor Ginzburg, Sam Gunningham, Augustus Lonergan, David Nadler, Ng\^o Bao Ch\^au, Lue Pan, and Zhiwei Yun for useful discussions.
I am grateful for the support of NSF grant DMS-1702337.

\section{Notations}\label{Notations}
In this article $k$ will be an algebraic closure of a finite field 
$k_0$ with $q$-element of characteristic $p>0$.
We fix a prime number $\ell$ different from $p$. 

For an algebraic stack $\calX$ over $k$, we denote by
$\sD(\calX)=D_c^b(\calX,\barQ)$ the bounded derived category of 
constructible $\ell$-adic complexes on $\calX$.
For a representable morphism $f:\calX\to\calY$,
the six functors $f^*,f_*,f_!,g^!,\otimes,\underline\Hom$ 
are understood in the derived sense. 
For a $k$-scheme $X$, 
sometimes we will write 
$R\Gamma(X,\barQ)=f_*\barQ$ (resp. $R\Gamma_c(X,\barQ)=f_!\barQ$),
where $f:X\to\on{Spec}k$ is the structure map.

For an algebraic group $H$ over $k$ acting on a 
$k$-scheme $X$, we denote by $H\backslash X$, the corresponding quotient stack.
Consider the case when $H$ is a finite group.
Then the pull-back along the quotient map $X\to H\backslash X$ induces an equivalence 
between 
$\sD(H\backslash X)$ and the (naive) $H$-equivariant derived category 
of $\ell$-adic complexes 
on $X$, denoted by $\sD_H(X)$, whose objects consist of pair 
$(\mF,\phi)$, where $\mF\in\sD(X)$ and $\phi:a^*\mF\is\pr^*\mF$ is an isomorphism 
satisfying the usual compatibility conditions (here $a$ and $\pr$ are the action and projection map from $H\times X$ to $X$ respectively).\footnote{This holds in a more general situation when the neutral component of $H$ is unipotent.} 
We will call an object $(\mF,\phi)$ in $\sD_H(X)$ a $H$-equivariant complex and 
$\phi$ a $H$-equivariant structure on $\mF$. 
For simplicity, 
we will write $\mF=(\mF,\phi)$ for an object in $\sD_H(X)$.

The category $\sD(\calX)$ has a natural perverse $t$-structure 
and we denote by $\sD(\calX)^\heartsuit$ the corresponding heart and 
$^p\tau_{\leq n},^p\tau_{\geq n}$ the perverse truncation functors.
For any $\mF\in \sD(\calX)$, the $n$-the perverse cohomology sheaf is defined as 
$^p\mathscr H^n(\mF)={^p}\tau_{\geq n}{^p}\tau_{\leq n}(\mF)[n]$.

We will denote by $\bG_a$ the additive group over $k$
and $\bG_m$ the multiplicative group over $k$.
We will fix a non-trivial character $\psi:\mathbb F_q\to\barQ^\times$ 
and denote by $\mL_\psi$ the corresponding Artin-Schreier sheaf on $\bG_a$.

\section{Induction and restriction functors}\label{ind and res}

\subsection{}
Let $G$ be a connected reductive group over $k$.
Let $T$ be a maximal torus of $G$, $B$ a Borel subgroup 
containing $T$ with unipotent radical $U$.
We denote by $\rW=T\backslash N_G(T)$ the Weyl group of $G$,
where $N_G(T)$ is the normalizer of $T$ in $G$.
We denote by $\sign=\pr^*\sign_\rW\in\sD_\rW(T)$ the pull back 
of
the sign representation $\sign_\rW$ of $\rW$ (regarding as an object in $\sD_\rW(\on{pt})$) along 
the projection $\pr:T\to\on{pt}$. 
Throughout the paper, we assume $p$ does not divide the order of the Weyl group $\rW$.

\subsection{}
Recall the Grothendieck-Springer simultaneous resolution 
of the Steinberg map $c:G\to\rW\backslash\backslash T$:
\beq\label{G-S resolution}
\xymatrix{\widetilde G\ar[r]^{\tilde q}\ar[d]^{\tilde c\ \ }&T\ar[d]^q\\
G\ar[r]^c&\rW\backslash\backslash T}
\eeq
where $\widetilde G$ is the closed subvariety of $G\times G/B$ 
consisting of 
pairs $(g,xB)$ such that $x^{-1}gx\in B$.
The map $\tilde c$ is given by $(g,xB)\to g$, and the map 
$\tilde q$ is given by $(g,xB)\to x^{-1}gx\on{\ mod} U\in B/U=T$.  
The induction functor $\Ind_{T\subset B}^G:\sD(T)\to\sD(G)$
is given by \[\Ind_{T\subset B}^G(\mF)=\tilde c_!\tilde q^*(\mF)[\dim G-\dim T].\]
We have the following equivalent construction of 
$\Ind_{T\subset B}^G$. Consider the fiber product 
$S=G\times_{\rW\backslash\backslash T}T$. 
The diagram \eqref{G-S resolution} induces a map
\beq
h:\widetilde G\to S=G\times_{\rW\backslash\backslash T}T
\eeq
which is proper and small, and an isomorphism 
over $S^{rs}=G^{rs}\times_{\rW\backslash\backslash T}T^{rs}$.
It follows that 
\beq\label{IC(S)}
h_!\barQ[\dim G]\is j_{!*}{\barQ}[\dim G]:=\IC(S,\barQ)
\eeq
where $j:S^{rs}\to S$ 
is the open embedding and $j_{!*}$ is the intermediate extension 
functor.
We have 
\beq\label{Ind IC}
\Ind^G_{T\subset B}(\mF)\is (p_G)_!(p_T^*(\mF)\otimes\IC(S,\barQ))[-\dim T]
\eeq
where $p_T:S\to T$ and $p_G:S\to G$ are the natural projection map.

Let functor $\Ind_{T\subset B}^G$ admits a left adjoint
$\Res_{T\subset B}^G:\sD(G)\to \sD(T)$, called the restriction functor, and is given by
\[\Res_{T\subset B}^G(\mF)= (q_B)_!i_B^*(\mF) \]
where $i_B:B\to G$ is the natural inclusion and 
$q_B:B\to T=U\backslash B$ is the quotient map.
More generally, one could define 
$\Res_{L\subset P}^G: \sD(G)\to \sD(L)$, for any 
pair $(L,P)$ where $L$ is a Levi subgroup of a parabolic subgroup $P$ of $G$.

We have the following exactness properties of 
induction and restriction functors:
\begin{proposition}\cite[Theorem 5.4]{BY}\label{exactness}
(1) The functor $\Ind_{T\subset B}^G$ maps perverse sheaves on $T$ to 
perverse sheaves on $G$.
(2) The functor 
$\Res_{L\subset P}^G$ maps 
$G$-conjugation equivariant perverse sheaves on $G$ to 
$L$-conjugation equivariant perverse sheaves on $L$.
\end{proposition}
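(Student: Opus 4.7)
The plan for part (1) is to exploit the alternative formula
\[\Ind_{T\subset B}^G(\mF) \is (p_G)_!(p_T^*\mF \otimes \IC(S,\barQ))[-\dim T],\]
where $p_G:S\to G$ is finite and $p_T:S\to T$ is flat of relative dimension $r:=\dim G-\dim T$. Since $(p_G)_!$ of a finite map is $t$-exact, it suffices to establish perversity of $p_T^*\mF\otimes\IC(S,\barQ)[-\dim T]$ on $S$. On the regular semisimple locus $S^{rs}=G^{rs}\times_{\rW\backslash\backslash T}T^{rs}$, the map $p_T$ is smooth of relative dimension $r$ and $\IC(S,\barQ)|_{S^{rs}}\is\barQ[\dim G]$, so the complex restricts to the perverse $p_T^*(\mF|_{T^{rs}})[r]$. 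The global perversity then reduces to showing that $p_T^*\mF\otimes\IC(S,\barQ)[-\dim T]$ is the intermediate extension of this restriction, which I would deduce from the smallness of the Grothendieck-Springer map $h:\widetilde G\to S$ (giving $h_!\barQ[\dim G]=\IC(S,\barQ)$) together with the standard $*$- and $!$-stalk estimates for intermediate extensions.

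For part (2), I write $\Res_{L\subset P}^G(\mF)=(q_P)_!i_P^*\mF$, where $q_P:P\to L$ is the quotient by the unipotent radical $U_P$ and $i_P:P\hookrightarrow G$ is the inclusion. The $G$-conjugation equivariance of $\mF$ makes $i_P^*\mF$ equivariant under the $U_P$-conjugation action on $P$. The key input is a cleanness theorem (a form of hyperbolic restriction in the sense of Braden, Mirkovi\'c-Vilonen): for $U_P$-equivariant complexes on $P$ relative to the orbit quotient $q_P$, the $!$-pushforward and $*$-pushforward coincide up to a fixed cohomological shift, and are $t$-exact after that shift. Applying this to $i_P^*\mF$ forces $\Res_{L\subset P}^G(\mF)$ to be perverse, and the $L$-conjugation equivariant structure is induced by the $P$-conjugation equivariance of $i_P^*\mF$ together with $L=P/U_P$.

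The main obstacle I expect for (1) is the global intermediate-extension step: one must control how the higher cohomology sheaves of $\IC(S,\barQ)$ at non-regular points interact with $p_T^*\mF$, which requires a nontrivial application of the projection formula for the small map $h$ together with the perversity estimates for $\mF$. For (2), the main technical input is the cleanness theorem for $U_P$-equivariant sheaves on $P$, which, while classical in the theory of character sheaves, is the substantive equivariant vanishing statement powering the entire argument.
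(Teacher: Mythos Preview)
The paper does not prove this proposition; it is simply quoted from \cite{BY}. So there is no in-paper argument to compare against, and I assess your sketch on its own terms.

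For part (2) your outline is essentially the right one and matches the approach of \cite{BY}. The key input is indeed Braden's hyperbolic localization, though your formulation is slightly off: the relevant statement is not that $(q_P)_!$ and $(q_P)_*$ agree on $U_P$-conjugation-equivariant objects, but rather that for a cocharacter $\bG_m\to Z(L)$ whose attracting locus is $P$, one has
\[
\Res^G_{L\subset P}=(q_P)_!\, i_P^*\ \simeq\ (q_{\bar P})_*\, i_{\bar P}^! = \Res^G_{L\subset \bar P}
\]
on $\bG_m$-equivariant (in particular $G$-conjugation-equivariant) complexes. Since the left-hand functor is manifestly right $t$-exact and the right-hand one left $t$-exact, their coincidence forces $t$-exactness.

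For part (1) there is a genuine gap. Your claim that $p_T^*\mF\otimes\IC(S,\barQ)[-\dim T]$ is the intermediate extension of its restriction to $S^{rs}$ fails whenever $\mF$ is not supported on $T^{rs}$. Concretely, take $\mF=i_{e*}\barQ$, the skyscraper at the identity $e\in T$: then $p_T^{-1}(e)\cap S^{rs}=\emptyset$, so the restriction to $S^{rs}$ vanishes, while the complex itself does not (after pushing forward by the finite map $p_G$ one obtains the Springer sheaf on the unipotent cone). The underlying problem is that smallness of $h:\widetilde G\to S$ only controls $h_!$ applied to the constant perverse sheaf on $\widetilde G$; it says nothing about $h_!$ of an arbitrary perverse sheaf, and by the projection formula
\[
p_T^*\mF\otimes\IC(S,\barQ)[-\dim T]\ \simeq\ h_!\bigl(\tilde q^*\mF[r]\bigr),
\]
that is exactly what you would need. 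The route taken in \cite{BY} is not a direct IC-sheaf argument of this kind; part (1) is again handled through hyperbolic localization, via the adjunctions linking $\Ind$ with $\Res$ for opposite Borels.
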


\begin{remark}
The proposition above generalizes well-known results of Lusztig's on 
exactness of induction and restriction functors for character sheaves \cite{Lu}.
\end{remark}
\subsection{$\rW$-action}

\begin{prop}\label{W action}
(1)
Let $\mF\in\sD(T)$. 
For every $w\in\rW$ one has
a canonical isomorphism 
\[\Ind_{T\subset B}^G(\mF)\is\Ind_{T\subset B}^G(w^*\mF).\]
(2)
Let $\mF\in\sD_\rW(T)$.
There is a natural $\rW$-action on 
$\Ind_{T\subset B}^G(\mF)\in\sD(G)$ and, for any  
irreducible representation 
$\xi:\rW\to\GL(V_\xi)$ of $\rW$, there is canonical direct summand 
\beq\label{summand}
\Ind_{T\subset B}^G(\mF)^{\rW,\xi}\in\sD(G)
\eeq of 
$\Ind_{T\subset B}^G(\mF)$ such that 
we have a $\rW$-equivariant decomposition in $\sD(G)$
\beq\label{decomposition}
\Ind_{T\subset B}^G(\mF)\is\bigoplus_{(\xi,V_\xi)\in\on{Irr}\rW} V_\xi\otimes\Ind_{T\subset B}^G(\mF)^{\rW,\xi}.
\eeq
(3)
Let $\mF\in\sD_\rW(T)^\heartsuit$ be a $\rW$-equivariant
perverse sheaf. Then 
the direct summand $\Ind_{T\subset B}^G(\mF)^{\rW,\xi}$ in~\eqref{decomposition} is also a perverse sheaf.

\end{prop}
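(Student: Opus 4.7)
The plan is to exploit the alternative description of the induction functor from \eqref{Ind IC},
\[
\Ind_{T\subset B}^G(\mF) \cong (p_G)_!\bigl(p_T^*\mF \otimes \IC(S,\barQ)\bigr)[-\dim T],
\]
where $S = G \times_{\rW\backslash\backslash T} T$ carries a natural $\rW$-action via the second factor, and to reduce everything to the observation that $\IC(S,\barQ)$ is canonically $\rW$-equivariant.

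First I would equip $\IC(S,\barQ)$ with a canonical $\rW$-equivariant structure. The key point is that on the regular semisimple open $S^{\on{rs}} = G^{\on{rs}} \times_{\rW\backslash\backslash T^{\on{rs}}} T^{\on{rs}}$ the map $p_G : S^{\on{rs}} \to G^{\on{rs}}$ is an \'etale $\rW$-torsor, so the constant sheaf $\barQ_{S^{\on{rs}}}$ carries a tautological $\rW$-equivariant structure. Since $\IC(S,\barQ) = j_{!*}\barQ[\dim G]$ and intermediate extension is fully faithful on perverse sheaves, this equivariance extends uniquely to $\IC(S,\barQ)$.

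For part (1), denote by $w_S : S \to S$ the action of $w \in \rW$. Since $p_G \circ w_S = p_G$ and $p_T \circ w_S = w \circ p_T$, using $(w_S)_! = (w_S^{-1})^*$ for an isomorphism together with the $\rW$-equivariance of $\IC(S,\barQ)$ gives a chain of canonical isomorphisms
\[
(p_G)_!\bigl(p_T^*\mF \otimes \IC(S,\barQ)\bigr) \is (p_G)_!(w_S)_!\bigl(p_T^*\mF \otimes \IC(S,\barQ)\bigr) \is (p_G)_!\bigl(p_T^*(w^{-1})^*\mF \otimes \IC(S,\barQ)\bigr),
\]
which, after relabeling $w \leftrightarrow w^{-1}$, yields $\Ind_{T\subset B}^G(\mF) \is \Ind_{T\subset B}^G(w^*\mF)$.

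For part (2), the $\rW$-equivariance data on $\mF$ provides isomorphisms $w^*\mF \is \mF$; composing with the canonical isomorphisms from part (1) produces, for each $w$, an automorphism of $\Ind_{T\subset B}^G(\mF)$. The cocycle condition follows from the compatibility of both families of isomorphisms with composition in $\rW$; this is the main bookkeeping step and the only place where care is genuinely required. Because $p \nmid |\rW|$, the group algebra $\barQ[\rW]$ is semisimple, so standard representation theory yields the isotypic decomposition \eqref{decomposition}, with $\Ind_{T\subset B}^G(\mF)^{\rW,\xi} = \Hom_\rW\bigl(V_\xi, \Ind_{T\subset B}^G(\mF)\bigr)$. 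Part (3) is then immediate: by Proposition \ref{exactness}(1) the complex $\Ind_{T\subset B}^G(\mF)$ is perverse whenever $\mF$ is, and each $\rW$-isotypic direct summand of a perverse sheaf is itself perverse. The only delicate point in the whole argument is the cocycle check in step (2); everything else reduces to properties of the canonical $\rW$-equivariant $\IC$-sheaf on $S$ combined with the semisimplicity of $\barQ[\rW]$.
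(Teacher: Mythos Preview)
Your argument is correct and follows the paper's strategy closely: both proofs hinge on the canonical $\rW$-equivariance of $\IC(S,\barQ)$ and the description \eqref{Ind IC}, and parts (1) and (3) are handled identically. The one noteworthy difference is in part (2): you obtain the isotypic decomposition abstractly, by splitting via the primitive central idempotents of the semisimple algebra $\barQ[\rW]$, whereas the paper instead passes to the quotient $\rW\backslash S$, writes $\IC(S,\barQ)\simeq(\pi')^*\IC(\rW\backslash S,\barQ)$ for the quotient map $\pi':S\to\rW\backslash S$, and after a projection-formula computation arrives at an explicit closed formula
\[
\Ind_{T\subset B}^G(\mF)^{\rW,\xi}=(p'_G)_!\bigl((p'_T)^*\mF'\otimes\IC(\rW\backslash S,\barQ)\otimes V_{\xi,S}\bigr)[-\dim T],
\]
with $\mF'\in\sD(\rW\backslash T)$ the descent of $\mF$ and $p_G',p_T'$ the induced projections from $\rW\backslash S$. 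The two approaches are equivalent; the paper's buys a concrete formula for the summands (used immediately afterward as the \emph{definition} of $\Ind_{T\subset B}^G(\mF)^{\rW}$), while yours is slightly cleaner conceptually and sidesteps the descent bookkeeping.
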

\begin{proof}
We have a $\rW$-action on $S=G\times_{\rW\backslash\backslash T}T$ 
given by $(g,t)\to (g,wt)$, $w\in\rW$, and
the projection maps $p_G$ and $p_T$ 
form $S$ to $G$ and $T$ are $\rW$-equivariant (for $p_G$, the $\rW$-action
on $G$ is the trivial action).
Consider the following commutative diagram
\beq\label{quotient diagram}
\xymatrix{G\ar[d]^{\id}&S\ar[d]^{\pi'}\ar[l]_{p_G}\ar[r]^{p_T}&T\ar[d]^\pi\\
G&\rW\backslash S\ar[r]^{p_T'}\ar[l]_{p_G'}&\rW\backslash T}
\eeq
where $\pi', p_G',p_T'$ are the natural quotient maps.
Note that, 
since $S^{rs}$ is $\rW$-invariant, 
the IC-complex $\IC(S,\barQ)=j_{*!}\barQ[\dim G]$ (here $j:S^{rs}\to S$)
descends to the IC-complex $\IC(\rW\backslash S,\barQ)$
on $\rW\backslash S$, in particular, $\IC(S,\barQ)$ is
$\rW$-equivariant.
It follows from~\eqref{Ind IC}  that 
\beq\label{w-action}
\Ind_{T\subset B}^G(w^*\mF)\is
(p_G)_!(w^*p_T^*(\mF)\otimes\IC(S,\barQ))[-\dim T]
\is
\eeq
\[\is
(p_G)_!(w^*p_T^*(\mF)\otimes w^*\IC(S,\barQ))[-\dim T]\is(p_G)_!w^*\big(p_T^*(\mF)\otimes\IC(S,\barQ)\big)[-\dim T]\is\]
\[\is
(p_G)_!\big(p_T^*(\mF)\otimes\IC(S,\barQ)\big)[-\dim T]\is
\Ind_{T\subset B}^G(\mF).\]
Part (1) follows.

Let $\mF\in\sD_\rW(T)$.
Since $w^*\mF\is\mF$ for any $w\in\rW$, we have 
 a canonical isomorphism 
\[a_w:\Ind_{T\subset B}^G(\mF)\is
\Ind_{T\subset B}^G(w^*\mF)\stackrel{\eqref{w-action}}\is\Ind_{T\subset B}^G(\mF),\]
and the assignment $w\to a_w, w\in\rW$
defines a $\rW$-action on $\Ind_{T\subset B}^G(\mF)$.
To show~\eqref{decomposition},
we observe that 
\[\IC(S,\barQ)\is(\pi')^*\IC(\rW\backslash S,\barQ),\]
and it implies
\beq\label{proj formula 1}
(\pi')_!\IC(S,\barQ)\is
(\pi')_!(\pi')^*\IC(\rW\backslash S,\barQ)\is
\bigoplus_{(\xi,V_\xi)\in\text{Irr}\rW} V_\xi\otimes(\IC(\rW\backslash S,\barQ)\otimes V_{\xi,S}\big)
\eeq
where $V_{\xi,S}\in\sD(\rW\backslash S)$ is the pull back
of $(\xi,V_\xi)\in \on{Rep}\rW\is\sD(\rW\backslash\on{pt})^\heartsuit$ along the projection 
$\rW\backslash S\to\rW\backslash\on{pt}$.

Let $\mF'\in\sD(\rW\backslash T)$ be such that $\pi^*\mF'\is\mF$.
It follows that
\[
\Ind_{T\subset B}^G(\mF)\stackrel{}\is
(p_G)_!\big(p_T^*(\mF)\otimes\IC(S,\barQ)\big)[-\dim T]\is\]
\[\is
(p_G')_!(\pi')_!\big((\pi')^*(p_T')^*\mF'\otimes\IC(S,\barQ)\big)[-\dim T]\is
(p'_G)_!\big((p'_T)^*\mF'\otimes(\pi'_!)\IC(S,\barQ)\big)[-\dim T]\stackrel{\eqref{proj formula 1}}\is\]
\[\is\bigoplus_{(\xi,V_\xi)\in\on{Irr}\rW} V_\xi\otimes\Ind_{T\subset B}^G(\mF)^{\rW,\xi}
\]
where 
\beq
\Ind_{T\subset B}^G(\mF)^{\rW,\xi}:=(p'_G)_!\big((p'_T)^*\mF'\otimes \IC(\rW\backslash S,\barQ)\otimes V_{\xi,S}\big)[-\dim T].
\eeq
Part (2) follows.
Part (3) follows from Proposition \ref{exactness}.
\end{proof}

\begin{remark}
Part (1) of the proposition generalizes  \cite[Theorem 2.5(3)]{BK1}.
\end{remark}

\begin{definition}\label{invaraint factor}
For any $\rW$-equivariant complex $\mF\in\sD_\rW(T)$, we will write
\beq\label{W-inv summand}
\Ind_{T\subset B}^G(\mF)^\rW:=\Ind_{T\subset B}^G(\mF)^{\rW,\on{triv}}
=(p'_G)_!\big((p'_T)^*\mF'\otimes \IC(\rW\backslash S,\barQ)\big)[-\dim T],
\eeq
(here $p_G',p_T'$ are the morphisms in~\eqref{quotient diagram})
for the summand in~\eqref{summand} corresponding to the 
trivial representation $(\on{triv},V_{\on{triv}}=\barQ)$.

\end{definition}

In the case when $\mF$ is a $\rW$-equivariant perverse local system on $T$,
we have the following description of~\eqref{W-inv summand}:
Let $q^{rs}:T^{rs}\to \rW\backslash\backslash T^{rs}$
and $c^{rs}:G\to \rW\backslash\backslash T^{rs}$ be the restriction of 
the maps in~\eqref{G-S resolution} to the regular semi-simple locus.
As $q^{rs}$ is an \'etale covering, 
the restriction of 
$\mF$ to $T^{rs}$ descends to a perverse local system $\mF'$ on 
$\rW\backslash\backslash T^{rs}$ and we have 
\[\Ind_{T\subset B}^G(\mF)^\rW\is j_{!*}(c^{rs})^*\mF'[\dim G-\dim T].\]

Let 
$P$ be a standard
 parabolic subgroup containing 
$B$ and let 
 $L$ be the unique Levi subgroup of $P$ containing $T$ with Borel subgroup 
 $B_L=B\cap L$.
Let $\rW_L$ be the Weyl group of $L$, which is naturally a subgroup of 
$\rW$.

\begin{proposition}\label{Mackey formula}
Let $\mF\in\sD_\rW(T)^\heartsuit$.
(1)
We have a canonical isomorphism 
\[\Res_{L\subset P}^G\circ\Ind_{T\subset B}^G(\mF_T)\is\Ind^\rW_{\rW_L}\Ind_{T\subset B_L}^L(\mF_T)\]
which is compatible with the nature $\rW$-actions on both sides.
(2) There is a canonical isomorphism 
\[\Res_{L\subset P}^G\circ\Ind_{T\subset B}^G(\mF)^\rW\is\Ind_{T\subset B_L}^L(\mF)^{\rW_L}.\]
\end{proposition}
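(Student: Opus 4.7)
The plan is to prove a standard Mackey decomposition
\[
\Res_{L\subset P}^G\circ\Ind_{T\subset B}^G(\mF)\;\is\;\bigoplus_{w\in\rW_L\bs\rW}\Ind_{T\subset B_L}^L(w^*\mF)
\]
and then deduce (1) and (2) by using the $\rW$-equivariant structure on $\mF$. First I would compute the left-hand side directly via base change in the diagram of Grothendieck-Springer resolutions. Writing $\Ind_{T\subset B}^G(\mF)=\tilde c_!\tilde q^*\mF[\dim G-\dim T]$ and $\Res^G_{L\subset P}(\mG)=(q_P)_!i_P^*\mG$, I form the fiber product
\[
\widetilde G_P:=\widetilde G\times_G P=\{(g,xB)\mid g\in P,\ x^{-1}gx\in B\}
\]
and use base change to express the composite functor in terms of $(q_P)_!\circ p_{1,!}\circ p_2^*$, where $p_1,p_2$ are the projections of $\widetilde G_P$ to $P$ and $T$ respectively.

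Next I stratify $\widetilde G_P$ according to the $P$-orbits on $G/B$. Because $P$ is a standard parabolic containing $B$, Bruhat theory gives a decomposition $P\bs G/B\is \rW_L\bs\rW$; choosing a set of shortest representatives $\{w\}\subset\rW$, each stratum $\widetilde G_P^w:=\{(g,xB)\in\widetilde G_P\mid x\in PwB\}$ is locally closed in $\widetilde G_P$, and the union over $w$ covers $\widetilde G_P$. Since the pair $(P,B)$ is affinely stratified in a manner compatible with parabolic induction, standard arguments (using that the perverse sheaf $\mF$ is smooth along the orbit structure and that the pieces are iterated fibrations) give a direct sum decomposition of $\Res^G_{L\subset P}\Ind^G_{T\subset B}(\mF)$ with summands indexed by $w\in\rW_L\bs\rW$. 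A direct computation on the $w$-stratum identifies the corresponding piece with $\Ind_{T\subset B_L}^L(w^*\mF)$, by recognizing that $P\cap wBw^{-1}$ is a Borel of a parabolic whose Levi quotient is $L$ with Borel $B_L$, and that the $\tilde q$-pullback twists $\mF$ by $w$. This yields the Mackey formula displayed above.

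For (1), I use the $\rW$-equivariant structure on $\mF$, which provides canonical isomorphisms $w^*\mF\is\mF$ for every $w\in\rW$. Combining this with the Mackey decomposition gives
\[
\Res^G_{L\subset P}\Ind^G_{T\subset B}(\mF)\;\is\;\bigoplus_{w\in\rW_L\bs\rW}\Ind^L_{T\subset B_L}(\mF)\;\is\;\Ind^\rW_{\rW_L}\Ind^L_{T\subset B_L}(\mF),
\]
where in the last step I interpret $\Ind^\rW_{\rW_L}$ as the functor on $\rW_L$-equivariant objects given by summing over coset representatives. The compatibility with $\rW$-actions follows by tracking how $\rW$ permutes the cosets, as in Proposition \ref{W action}(1). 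Part (2) is then immediate by passing to $\rW$-invariants: since $p\nmid|\rW|$ we have $(\Ind^\rW_{\rW_L}V)^\rW\is V^{\rW_L}$ for any $\rW_L$-representation $V$ in $\sD(L)$, essentially Frobenius reciprocity for the trivial character, applied to $V=\Ind^L_{T\subset B_L}(\mF)$.

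The main obstacle I expect is the bookkeeping in the stratification step: identifying precisely that the $w$-stratum contributes $\Ind^L_{T\subset B_L}(w^*\mF)$ and that the sum really splits as a direct sum in the derived category (rather than an iterated extension) requires a careful analysis of the geometry of $\widetilde G_P^w$ and use of the exactness statement in Proposition \ref{exactness}. Once the Mackey decomposition is set up cleanly and the $\rW$-equivariance is threaded through, the deduction of (2) from (1) is formal.
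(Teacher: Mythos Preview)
Your proposal is correct and follows the same route as the paper. The paper's own proof is extremely brief: it cites \cite[Theorem 2.7]{BK2} for part (1) (with a footnote observing that Proposition \ref{W action} removes the restrictive hypothesis on $\mF$ imposed in \cite{BK2}), and then deduces (2) from (1) by taking $\rW$-invariants, exactly as you do. Your Bruhat stratification of $\widetilde G_P$ by $P$-orbits on $G/B$ indexed by $\rW_L\backslash\rW$, together with the identification of each stratum contribution, is precisely the argument underlying that citation; the only point to tighten is your description of the $w$-stratum (for $w$ a shortest coset representative one has $wBw^{-1}\cap L=B_L$, not that $P\cap wBw^{-1}$ is itself a Borel), and the splitting of the resulting filtration into a direct sum, which indeed uses the perversity from Proposition \ref{exactness} together with the $\rW$-action.
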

\begin{proof}
Part (2) follows from part (1) by taking $\rW$-invariants on both sides.
Part (1) is proved in \cite[Theorem 2.7]{BK2}.
\footnote{In \emph{loc. cit.} the authors assumed $\mF$ is irreducible with support a $\rW$-stable sub tours in $T$.
This is because the proof makes use of the isomorphism $\Ind_{T\subset B}^G(\mF)\is\Ind_{T\subset B}^G(w^*\mF)$ which was constructed only for those $\mF$ 
satisfying the assumption above
(see \cite[Theorem 2.5(3)]{BK1}). Now, with Proposition \ref{W action}, the same argument 
works for arbitrary $\rW$-equivariant perverse sheaves on $T$.
}

\end{proof}
\section{Characterization of central complexes}\label{central complexes}
\subsection{The scheme of tame characters}\label{C(T)}
Let $\pi_1(T)$ be the \'etale fundamental group of $T$ 
and let $\pi_1(T)^t$ be its tame quotient.
A continuous character 
$\chi:\pi_1(T)\to\barQ^\times$ is called \emph{tame} if 
it factors though $\pi_1(T)^t$.
For any continuous character $\chi:\pi_1(T)\to\barQ^\times$, 
we denote by $\mL_\chi$  the corresponding rank one 
local system on $T$. 
A rank one local system $\mL$ on $T$ is called \emph{tame}
if $\mL\is\mL_\chi$ for a tame character $\chi$.

In \cite{GL}, a $\barQ$-scheme $\calC(T)$ is defined, whose 
$\barQ$-points are in bijection with 
tame characters of $\pi_1(T)$. There is decomposition 
\beq\label{component}
\calC(T)=\bigsqcup_{\chi_f\in\calC(T)_f}\{\chi_f\}\times\calC(T)_\ell
\eeq into connected components, where 
$\calC(T)_f\subset\calC(T)$ is the subset consisting of tame characters of 
order prime to $\ell$ and 
$\calC(T)_\ell$ is the connected component of $\calC(T)$ containing 
the trivial character.
It is shown in \emph{loc. cit.} that $\calC(T)$ is Noetherian and regular and there is an isomorphism
\[\calC(T)_\ell\is\Spec(\barQ\otimes_{\bZ_\ell}\bZ_\ell[[x_1,...,x_r]]).\] 
In addition, the $\barQ$-points of $\calC(T)_\ell$ are in bijection 
with pro-$\ell$ characters of $\pi_1(T)$ (i.e. characters of 
the pro-$\ell$ quotient $\pi_1(T)_\ell$ of $\pi_1(T)^t$).

\subsection{The group $\rW_\chi$}
The Weyl group $\rW$ acts naturally on $\calC(T)$.  
For any $\chi\in\calC(T)(\barQ)$
we denote by $\rW'_\chi$ the stabilizer of $\chi$ in $\rW$.
Equivalently, $\rW_\chi$ is the group consisting of 
$w\in\rW$ such that $w^*\mL_\chi\is\mL_\chi$.
Let
$\rW_\chi$ to be the subgroup of $\rW'_\chi$ generated by reflections 
$s_\alpha$ satisfying the following property:
Let $\check\alpha:\bG_m\to T$ be the coroot corresponding to $\alpha$.
The pullback $(\check\alpha)^*\mL_\chi$ is isomorphic to the trivial local system on 
$\bG_m$.

\begin{example}\label{W_chi}
Let $G=\SL_2$ and let 
$\chi:\pi_1(T)^t\to\{\pm 1\}$ be the character corresponding 
to the tame covering $T\to T, x\to x^2$.
 Then we have $\rW'_\chi=\rW$ but $\rW_\chi=e$ is trivial.
\end{example}

\subsection{Central complexes}\label{central objects}
Let $\mF\in\sD_\rW(T)$.
For any tame character $\chi\in\calC(T)(\barQ)$ the stabilizer 
$\rW'_\chi$, hence also its subgroup $\rW_\chi$, acts naturally on the 
\'etale cohomology groups
$\oH_c^*(T,\mF_T\otimes\mL_\chi)$ (resp. $\oH^*(T,\mF_T\otimes\mL_\chi)$).

\begin{definition}\label{tw descent}
A $\rW$-equivariant complex $\mF\in\sD_\rW(T)$ is called 
\emph{central} (resp. $*$-\emph{central}) if the following holds: for any tame character $\chi\in\calC(T)(\barQ)$, the group $\rW_\chi$ acts 
on \[\ \ \ \ \ \oH_c^*(T,\mF\otimes\mL_\chi)\ \ \ \ \ \  (\text{resp.}\ \  \oH^*(T,\mF\otimes\mL_\chi))\] 
through the sign character $\sign_{\rW}:\rW_\chi\to\{\pm1\}$.
\end{definition}

\quash{
\begin{example}\label{KL}
Let $i:\{e\}\to T$ be the embedding of the unit element. Then 
$i_*\barQ\otimes\sign$ is central.
\end{example}

\begin{remark}
Note that the trivial local system $\mF=\overline{\bbQ}_{\ell, T}$ on $T$
with the canonical $\rW$-equivariant structure 
is neither central nor $*$-central.
\end{remark}
}
\quash{
\begin{remark}\label{equivalent def}
The definition above is equivalent to the following:
$\mF$ is central ($*$-central) if, for any tame character 
$\chi$, 
$\rW_\chi$-action on $\oH_c^*(T,\mF\otimes\sign\otimes\mL_\chi)$
(resp. $\oH^*(T,\mF\otimes\sign\otimes\mL_\chi)$)
is trivial.
\end{remark}}

\subsection{Mellin transforms}\label{Mellin}
In \cite{GL}, the authors constructed the \emph{Mellin transforms}
\[\calM_!:\sD(T)\to D^b_{coh}(\calC(T))\]
\[\calM_*:\sD(T)\to D^b_{coh}(\calC(T))\]
with the following properties:
\begin{enumerate}
\item
Let $\chi\in\calC(T)(\barQ)$ and $i_\chi:\{\chi\}\to\calC(T)$ be the natural inclusion. We have 
\beq\label{fiber}
R\Gamma_c(T,\mF\otimes\mL_\chi)\is i^*_\chi\calM_!(\mF)
\eeq
\[R\Gamma(T,\mF\otimes\mL_\chi)\is i^*_\chi\calM_*(\mF).\]
\item
We have natural isomorphism $\bbD_{}(\calM_!(\mF))\is\on{inv}^*\calM_*(\bbD(\mF))$. 
\item 
The functor $\calM_*$ is t-exact with respect to the perverse $t$-structure on 
$\sD(T)$ and the natural $t$-structure on $D^b_{coh}(\calC(T))$.
Moreover, for any $\mF\in \sD(T)$, $\mF$ is perverse if and only if 
$\calM_*(\mF)$ is a coherent complex in degree zero.

\item 
We have \[\calM_!(\mF*_!\mF')\is\calM_!(\mF)\otimes\calM_!(\mF')\]
\[\calM_*(\mF* \mF')\is\calM_*(\mF)\otimes\calM_*(\mF')\]
where $\mF*_!\mF'=m_!(\mF\boxtimes\mF')$, $\mF*\mF'=m_*(\mF\boxtimes\mF')$, and $m:T\times T\to T$
is the multiplication map.
\item For any $\chi\in\calC(T)_f$ we have 
\[\calM_*(\mF)|_{\{\chi\}\times\calC(T)_\ell}\is
\calM_*(\mF\otimes\mL_\chi)|_{\calC(T)_\ell}.\]
\item The Mellin transforms restricts to an equivalence 
\beq\label{inverse Mellin}
\calM_!,\calM_*:\sD(T)_{\on{mon}}\is D^b_{coh}(\calC(T))_f
\eeq
between the full subcategory $\sD(T)_{\on{mon}}$ of monodromic $\ell$-adic complexes on $T$ and 
the full subcategory $D^b_{coh}(\calC(T))_f$ of coherent complexes on $\calC(T)$ with 
finite support.
\end{enumerate}

The Weyl group $\rW$ acts naturally on $\calC(T)$ and it follows from the 
construction of Mellin transforms that for $\mF\in\sD_\rW(T)$, 
the $\rW$-equivariant structure on $\mF$ gives rise to a 
$\rW$-equivariant structure on
$\calM_!(\mF)$ (resp. $\calM_*(\mF)$), such that for any $\chi\in\calC(T)(\barQ)$,
the isomorphism 
\eqref{fiber} above 
is compatible with the natural 
$\rW'_\chi$-actions. 

We have the following characterization of central complexes:

\begin{proposition}\label{characterization}
Let $\mF\in\sD_\rW(T)$ be a $\rW$-equivariant complex.
The following are equivalent:
\begin{enumerate}
\item $\mF$
 is central.
\item 
For any $\chi\in\calC(T)(\barQ)$ the action of $\rW_\chi$ on 
$i^*_\chi\calM_!(\mF\otimes\sign)
$
is trivial. 
\\ 
Assume further that $\rW_\chi=\rW_\chi'$ for all $\chi\in\calC(T)(\barQ)$, then above statements are equivalent to
\item
The restriction of the Mellin transfrom 
$
\calM_!(\mF\otimes\sign)\in D_{coh}^b(\calC(T))
$ to each 
connected component 
$\calC(T)_{\ell,\chi_f}:=\{\chi_f\}\times\calC(T)_\ell$ of 
$\calC(T)$ (see~\eqref{component})
descends to the quotient $\rW_{\chi_f}\backslash\backslash\calC(T)_{\ell,\chi_f}$.\footnote{Note that $\calC(T)_{\ell,\chi_f}$ is stable under the $\rW_{\chi_f}$-action.}
\end{enumerate}
The same is true for $*$-central complexes if we replace 
$\calM_!$ by $\calM_*$.
\end{proposition}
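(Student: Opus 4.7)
The plan is to reduce both equivalences to Mellin transform property (1) combined with a descent criterion for equivariant coherent sheaves under a finite group action in characteristic prime to the group order.

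For (1) $\iff$ (2), the key bookkeeping point is that tensoring with $\sign$ twists any $\rW$-equivariant structure by the sign character of $\rW$, so the action of $\rW_\chi$ on $\oH_c^*(T, \mF \otimes \mL_\chi)$ equals the sign-twist of its action on $\oH_c^*(T, \mF \otimes \sign \otimes \mL_\chi)$. Hence $\mF$ is central if and only if $\rW_\chi$ acts trivially on $\oH_c^*(T, \mF \otimes \sign \otimes \mL_\chi)$ for every $\chi$. By Mellin property (1), this cohomology computes the stalk $i^*_\chi \calM_!(\mF \otimes \sign)$, and the compatibility of Mellin with the $\rW$-equivariant structure promotes this identification to a $\rW'_\chi$-equivariant one. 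Since $p \nmid |\rW|$, the category of finite-dimensional $\rW_\chi$-representations over $\barQ$ is semisimple, so a complex at a point has trivial $\rW_\chi$-action if and only if each of its cohomology groups does; this yields (1) $\iff$ (2).

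For (2) $\iff$ (3) under the hypothesis $\rW_\chi = \rW'_\chi$ for all $\chi$: the $\rW$-action on $\calC(T)$ permutes the components of~\eqref{component} through the induced action on $\calC(T)_f$, so the component $\calC(T)_{\ell, \chi_f}$ is preserved by exactly the stabilizer $\rW'_{\chi_f}$. The global $\rW$-equivariant structure on $\calM_!(\mF \otimes \sign)$ therefore restricts on each component to a $\rW'_{\chi_f}$-equivariant structure, and condition (3) asks that this restriction descends along the quotient map $\calC(T)_{\ell, \chi_f} \to \rW'_{\chi_f}\backslash\backslash\calC(T)_{\ell, \chi_f}$. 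I would then invoke the classical descent criterion: for a finite group $G$ acting on a Noetherian affine scheme with $|G|$ invertible, a $G$-equivariant coherent complex descends to the scheme-theoretic quotient if and only if, at every point, the stabilizer acts trivially on the stalk. At $\chi = (\chi_f, \chi_\ell) \in \calC(T)_{\ell, \chi_f}$ the stabilizer in $\rW'_{\chi_f}$ is $\rW'_\chi$, which coincides with $\rW_\chi$ by hypothesis, so the descent condition becomes precisely (2).

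The $*$-central case runs identically, replacing $\calM_!$ by $\calM_*$ and the $R\Gamma_c$ version of Mellin property (1) by its $R\Gamma$ counterpart. The main obstacle is the descent step: while classical for finite-type affine schemes, $\calC(T)_\ell \simeq \Spec(\barQ \otimes_{\bZ_\ell} \bZ_\ell[[x_1,\ldots,x_r]])$ is not of finite type. However $\calC(T)_\ell$ is Noetherian, regular, and affine, $|\rW'_{\chi_f}|$ is invertible, and checking descent reduces (by taking $\rW'_{\chi_f}$-invariants and using the averaging idempotent) to the stalkwise condition supplied by (2), so the usual argument carries over without substantive modification.
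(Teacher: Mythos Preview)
Your argument is correct and matches the paper's proof in structure: $(1)\Leftrightarrow(2)$ is deduced from Mellin property~(1), and $(2)\Leftrightarrow(3)$ from a descent criterion for equivariant coherent complexes (the paper cites \cite[Theorem 1.3]{N} and \cite[Lemma 5.3]{C} for this step, while you sketch why the criterion applies despite $\calC(T)_\ell$ not being of finite type). The only small remark is that the paper phrases the stabilizer computation as the chain $\rW_\chi=\rW'_\chi\subset\rW'_{\chi_f}=\rW_{\chi_f}$, which is exactly your observation that the stabilizer of $\chi$ in $\rW'_{\chi_f}$ is $\rW'_\chi=\rW_\chi$.
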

\begin{proof}
$(1)\Leftrightarrow(2)$
follows from the property (1) of Mellin transform above.
Assume
$\rW_\chi=\rW_\chi'$ for all $\chi$.
Then for any $\chi\in\calC(T)_{\ell,\chi_f}$ we have 
$\rW_\chi=\rW_{\chi}'\subset\rW_{\chi_f}'=\rW_{\chi_f}$
and it follows that 
the stabilizer of $\chi$ in $\rW_{\chi_f}$ is equal to $\rW_\chi$
and 
$(2)\Leftrightarrow(3)$
follows from the descent criterion for coherent complexes in
\cite[Theorem 1.3]{N} (or \cite[Lemma 5.3]{C}).
\end{proof}

\quash{
\begin{corollary}
Let $\mF,\mF'$ be two central complexes.
Then $(\mF*_!\mF')\otimes\sign$ is again a central complexes.
The same is true for $*$-central complexes if we replace 
$*_!$ by $*$.
\end{corollary}
\begin{proof}
By the proposition above, is suffices to show that 
the $\rW_\chi$-action on
$i^*_\chi\calM_{!}((\mF*_!\mF'))$ is trivial.
This follows from the isomorphism 
\[i^*_\chi\calM_{!}((\mF*_!\mF'))\is
i^*_\chi(\calM_{!}(\mF)\otimes\calM_{!}(\mF'))\is
i^*_\chi\calM_{!}(\mF)\otimes i^*_\chi\calM_{!}(\mF')\]
and the fact that the $\rW_\chi$-actions on 
$i^*_\chi\calM_{!}(\mF)$ and $i^*_\chi\calM_{!}(\mF')$ are given by the 
sign character.
\end{proof}

The corollary above shows that 
the map
\beq\label{monodical structure}
(\mF',\mF)\to(\mF'*_!\mF)\otimes\sign\ \ \ \ (\text{resp.}\ \ \ (\mF',\mF)\to(\mF'*\mF)\otimes\sign)\eeq
defines a symmetric monodical structure 
on the category of central complexes (resp. $*$-central complexes). 

}

\subsection{Examples of tame central local systems}\label{tame central}
Consider the quotient map
$\pi_\chi:\calC(T)_\ell\to\rW_\chi\backslash\backslash\calC(T)_\ell$.
Let $0\in\calC(T)(\barQ)$ be the trivial character and let 
$\pi_\chi(0)$ be its image in $\rW_\chi\backslash\backslash\calC(T)_\ell$.
 Let $\mO_{\pi_\chi(0)}$ be the structure sheaf of the point $\pi_\chi(0)$
 and we define 
\beq\label{descent of R_chi}
\calR_\chi:=\pi_\chi^*\mO_{\pi_\chi(0)}
\eeq
which a $\rW_\chi$-equivariant coherent sheaf on $\calC(T)_\ell$.
Note that, as $\rW_\chi$ is normal subgroup of $\rW'_\chi$, 
the $\rW_\chi'$-action on $\calC(T)_\ell$ descends to a $\rW_\chi'$-action 
on the quotient $\rW_\chi\backslash\backslash\calC(T)_\ell$ fixing $\pi_\chi(0)$ and it follows that 
$\calR_\chi$ has a canonical $\rW_\chi'$-equivariant structure.

We will regard $\calR_\chi$ as a coherent sheaf on $\calC(T)$ supported at the 
component $\calC(T)_\ell=\{0\}\times\calC(T)_\ell$ and 
define 
$\calS_\chi:=m_\chi^*(\calR_\chi)$ where
$m_\chi:\calC(T)\to\calC(T)$ be the morphism of translation by $\chi$.
Since $m_\chi$ intertwines the $\rW_\chi'$-action on $\calC(T)$, 
$\calS_\chi$ is $\rW_\chi'$-equivariant,
moreover, there is natural isomorphism 
\beq\label{w action}
w^*\calS_\chi\is\calS_{w^{-1}\cdot\chi}
\eeq for any $w\in\rW$.\footnote{Indeed, we have 
$w^*\calS_\chi\is w^*m_\chi^*\calR_\chi\is m_{w^{-1}\chi}^*w^*\calR_\chi\is m_{w^{-1}\chi}^*\calR_{w^{-1}
\chi}=\calS_{w^{-1}\chi}$, where we use the observation that 
$w^{-1}\rW_\chi w=\rW_{w^{-1}\chi}$ and hence $w^*\calR_\chi\is\calR_{w^{-1}\chi}$.}
For any $\rW$-orbit $\theta$ in $\calC(T)$, we define 
the following $\rW$-equivariant coherent sheaf 
with finite support
\[\mS_\theta=\bigoplus_{\chi\in\theta}\calS_\chi\]
where the $\rW$-equivariant structure is given by the 
isomorphisms in~\eqref{w action}. 

Finally, we define the following $\rW$-equivariant perverse local systems on $T$:
\[\mE^!_\theta:=\calM_!^{-1}(\calS_\theta)\otimes\sign\]
\[\mE_\theta:=\calM_*^{-1}(\calS_\theta)\otimes\sign.
\]
Here $\calM_!^{-1},\calM_*^{-1}$ are the inverse of the Mellin transforms in~\eqref{inverse Mellin}.\footnote{Note that $\calS_\theta$ has finite support and hence the inverse of the Mellin transform is well-defined by~\eqref{inverse Mellin}}
\begin{lemma}
$\mE^!_\theta$ is central and 
$\mE_\theta$ is $*$-central.
\end{lemma}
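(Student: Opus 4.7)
The plan is to apply the Mellin-transform characterization of central complexes (Proposition~\ref{characterization}, $(1)\Leftrightarrow(2)$) and reduce the problem to a triviality statement on the quotient $\rW_\chi\backslash\backslash\calC(T)_\ell$.

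First, I would compute $\calM_!(\mE^!_\theta\otimes\sign)\cong\calS_\theta$. Indeed, by definition $\mE^!_\theta=\calM_!^{-1}(\calS_\theta)\otimes\sign$, and since $\sign^{\otimes 2}$ is canonically trivial, $\mE^!_\theta\otimes\sign\cong\calM_!^{-1}(\calS_\theta)$, whose Mellin transform is $\calS_\theta$. By Proposition~\ref{characterization}(2) it then suffices to show that for every $\chi\in\calC(T)(\barQ)$, the subgroup $\rW_\chi$ acts trivially on the derived fiber $i_\chi^*\calS_\theta$.

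Second, I would reduce to a single summand. Since $\calR_{\chi'}$ is set-theoretically supported at $0\in\calC(T)_\ell$ and $\calS_{\chi'}=m_{\chi'}^*\calR_{\chi'}$, each summand in the decomposition $\calS_\theta=\bigoplus_{\chi'\in\theta}\calS_{\chi'}$ is set-theoretically supported at a single point of $\theta$. Hence $i_\chi^*\calS_\theta$ vanishes unless $\chi\in\theta$, in which case only one summand contributes. Using the $\rW_\chi'$-equivariance of the translation $m_\chi$ and relabeling inside the orbit if necessary, composition with $i_0$ yields a canonical $\rW_\chi$-equivariant identification $i_\chi^*\calS_\theta\cong i_0^*\calR_\chi$.

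Third, I would verify triviality of the $\rW_\chi$-action on $i_0^*\calR_\chi$. Since $\rW_\chi$ is by construction generated by reflections on $\calC(T)_\ell$, the Chevalley--Shephard--Todd theorem ensures that $\pi_\chi$ is flat; consequently $\calR_\chi=\pi_\chi^*\mO_{\pi_\chi(0)}$ agrees with the derived pull-back, so
\[ i_0^*\calR_\chi\cong i_0^*\pi_\chi^*\mO_{\pi_\chi(0)}\cong (\pi_\chi\circ i_0)^*\mO_{\pi_\chi(0)}=i_{\pi_\chi(0)}^*\mO_{\pi_\chi(0)}. \]
The right-hand side is a computation entirely on the quotient $\rW_\chi\backslash\backslash\calC(T)_\ell$, where $\rW_\chi$ acts trivially by construction, so the $\rW_\chi$-action on $i_0^*\calR_\chi$ is trivial. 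The proof that $\mE_\theta$ is $*$-central is identical with $\calM_*$ in place of $\calM_!$.

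The main obstacle I anticipate is the bookkeeping in the second step: ensuring that the $\rW_\chi$-action on $i_\chi^*\calS_\chi$ coming from the $\rW$-equivariance of $\calS_\theta$ genuinely matches the canonical $\rW_\chi$-action on $\calR_\chi$ inherited from the quotient $\pi_\chi$, rather than a sign-twisted version. This is precisely where one must exploit the $\rW_\chi'$-equivariance of $m_\chi$ recorded in the construction of $\calS_\chi$.
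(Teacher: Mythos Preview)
Your argument is correct and follows essentially the same route as the paper: reduce via Proposition~\ref{characterization}(2) to triviality of the stabilizer action on the fiber of $\calS_\theta$, then observe that the relevant summand is, by construction, pulled back from the $\rW_\chi$-quotient. One indexing slip to fix: since $\calS_{\chi'}=m_{\chi'}^*\calR_{\chi'}$ and $\calR_{\chi'}$ is supported at $0$, the sheaf $\calS_{\chi'}$ is supported at $(\chi')^{-1}$, so $\calS_\theta$ lives over $\theta^{-1}$ rather than $\theta$ (the paper records this explicitly); this is harmless because $\rW_\chi=\rW_{\chi^{-1}}$ and hence $\calR_\chi=\calR_{\chi^{-1}}$, so your identification $i_\chi^*\calS_\theta\cong i_0^*\calR_\chi$ and the rest of the argument survive after this correction.
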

\begin{proof}
Since $\calS_\theta$ is supported on 
$\theta^{-1}=\{\chi^{-1}|\chi\in\theta\}$,
by Proposition \ref{characterization}, it suffices to show that the action of $\rW_{\chi^{-1}}$
(note that $\rW_{\chi^{-1}}=\rW_\chi$) on
the fiber 
\[i_{\chi^{-1}}^*\calM_!(\mE^!_\theta\otimes\sign)\is i_{\chi^{-1}}^*\calM_!(\calM^{-1}_!(\calS_\theta))
\is i_{\chi^{-1}}^*\calS_\theta\is i_{\chi^{-1}}^*\calS_\chi\]
is trivial. 
Let $\calC(T)_{\ell,\chi^{-1}}\subset\calC(T)$ be the component containing 
$\chi^{-1}$. Then the translation map 
$m_\chi$ restricts to a map
$m_\chi:\calC(T)_{\ell,\chi^{-1}}\to\calC(T)_\ell$, moreover, 
we have the following commutative diagram
\[\xymatrix{\calC(T)_{\ell,\chi^{-1}}\ar[r]^{m_\chi}\ar[d]^{\pi_{\ell,\chi^{-1}}}&\calC(T)_{\ell}\ar[d]^{\pi_\chi}\\
\rW_{\chi^{-1}}\backslash\backslash\calC(T)_{\ell,\chi^{-1}}\ar[r]^{\bar m_\chi}&\rW_\chi\backslash\backslash\calC(T)_{\ell}}\]
where $\bar m_\chi$ is the descent of $m_\chi$.
It follows that the restriction of 
$\calS_\chi=m_\chi^{*}\pi_\chi^{*}\mO_{\pi_\chi(0)}\is\pi_{\ell,\chi^{-1}}^*\bar m_\chi^*\mO_{\pi_\chi(0)}$ to $\calC(T)_{\ell,\chi^{-1}}$
descends to $\rW_{\chi^{-1}}\backslash\backslash\calC(T)_{\ell,\chi^{-1}}$
and it implies the action of 
$\rW_{\chi^{-1}}$ on $i_{\chi^{-1}}^*\calS_\chi$ is trivial.
\end{proof}

\quash{
Here is an equivalent construction of $\calR_\chi$.
Let $R=\hat\mO_{\calC(T),0}$ be the completion of 
$\mO_{\calC(T)}$ at the trivial character $0\in\calC(T)(\barQ)$ and we denote by $m_R$ its maximal ideal.
For any tame character $\chi$,
the group $\rW_\chi$ acts naturally on  $R$ and $m_R$
and we define $R^{\rW_\chi}$ to be the $\rW_\chi$-invariants 
in $R$ and $R^{\rW_\chi}_+=R^{\rW_\chi}\cap m_R$.
The quotient $R/R\cdot R^{\rW_\chi}_+$ is naturally a 
coherent 
$\mO_{\calC(T)}$-module and the 
 the corresponding coherent sheaf on on $\calC(T)$ is exactly $\calR_\chi$.
 We have the following description of $\mE_\theta$ as a representation of the tame fundamental group $\pi_1(T)^t$
: choose a finite extension $K$ of $\mathbb Q_\ell$ in $\barQ$
such that $\theta$ is defined over the ring of integer $R_K$ of $K$.
By \cite[Corollary 4.2.24]{GL}, 
there exists a continuous $R_K[[\pi_1(T)^t]]$-module $S_{\theta,K}$ such that 
$S_{\theta,K}\otimes_{R_K[[\pi_1(T)^t]]}\mO_{\calC(T)}\is\Gamma(\calS_\theta)$.
Now $\mE^*_\theta$ corresponds to the 
natural representation of $\pi_1(T)^t$ on $S_{\theta,K}\otimes_{R_K}\barQ$
given by the $R_K[[\pi_1(T)^t]]$-module structure.
 }
\quash{
Let $\calZ(D(\rW\backslash T))$ be the full subcategory of
$D(\rW\backslash T)$ consisting of central objects.
Let $\mF,\mF'\in\calZ(D(\rW\backslash T))$.
Then by Proposition \ref{characterization}, the action of $\rW_\chi$ on
the fiber
\[Li_\chi^*\calM_*(\mF\star\mF')\is
Li_\chi^*\calM_*((\mF\otimes\sign)\star(\mF'\otimes\sign))\is
Li_\chi^*\calM_*(\mF\otimes\sign)\otimes^LLi_\chi^*\calM_!(\mF'\otimes\sign)\]
is trivial. Thus by Proposition \ref{characterization} again   
\[\mF
\star_{\sign}\mF':=(\mF\star\mF')\otimes\sign\in\calZ(D(\rW\backslash T))\]
and the assignment
\[\mF\to\calM_{*,\sign}(\mF):=\calM_{!}(\mF\otimes\sign)\]
defines a monoidal functor
\beq\label{M_sign}
\calM_{*,\sign}:(\calZ(D(\rW\backslash T)),\star_{\sign})\ra (D_{\on{coh}}^b(\rW\backslash\calC(T)),\otimes^L)
\eeq

Let 
$\calZ(D(\rW\backslash T))_{\on{mon}}=\calZ(D(\rW\backslash T))\cap D(\rW\backslash T)_{\on{mon}}$
and let $D_{\on{coh}}^b(\rW\backslash\backslash\calC(T))_f$ is 
the full subcategory of $D_{\on{coh}}^b(\rW\backslash\backslash\calC(T))$
consisting of coherent complexes with finite support.
Assume $G$ has connected center. Then above functor restricts to an equivalence 
\beq\label{M_sign equ}
\calM_{*,\sign}:(\calZ(D(\rW\backslash T))_{\on{mon}},\star_{\sign})\is (D_{\on{coh}}^b(\rW\backslash\backslash\calC(T))_f,\otimes^L)
\eeq
}

\section{$\rho$-Bessel sheaves}\label{gamma sheaves}

\subsection{Hypergeometric sheaves}\label{Hypergeometric sheaves}
Let $\underline\lambda=\{\lambda_1,...,\lambda_r\}\subset\bX_\bullet(T)$ be a collection of possible repeated cocharacters.
Consider the following maps
\[\bG_a\stackrel{\tr}\longleftarrow\bG_m^r\stackrel{\pr_{\underline\lambda}}\longrightarrow T\]
where $\tr(x_1,...,x_r)=\sum_{i=1}^r x_i$ and $\pr_{\underline\lambda}(x_1,...,x_r)=\prod_{i=1}^{r}\lambda_i(x_i)$.
Consider the following complexes:
\beq\label{Hyper}
\Phi_{\underline\lambda}:=\pr_{\underline\lambda,!}\tr^*\mL_\psi[r],\ \ \ \ 
\Phi_{\underline\lambda}^*:=\pr_{\underline\lambda,*}\tr^*\mL_\psi[r]
\eeq
in $D(T)$.
Note that we have a natural forget supports map 
\beq\label{! to *}
\Phi_{\underline\lambda}\to\Phi_{\underline\lambda}^*.
\eeq
Following Katz \cite{K}, we will call the complexes in \eqref{Hyper}
hypergeometric sheaves.

Let $\sigma:T\to\bG_m$ be a character $\lambda$. 
A cocharacter is called $\sigma$-positive if $\langle\sigma,\lambda\rangle$ is positive.

\begin{prop}\label{cleanness of gamma}
Let $\underline\lambda=\{\lambda_1,...,\lambda_r\}\subset\bX_\bullet(T)$
be a collection of cocharacters.
\begin{enumerate}
\item
Assume that each $\lambda_i$ is nontrivial.
Then $\Phi_{\underline\lambda}$ 
and $\Phi_{\underline\lambda}^*$ are perverse sheaves.
\item
Assume that each $\lambda_i$ are $\sigma$-positive.
Then the map $\Phi_{\underline\lambda}\to\Phi_{\underline\lambda}^*$ in~\eqref{! to *} is an isomorphism 
and $\Phi_{\underline\lambda}\is\Phi_{\underline\lambda}^*$
is a perverse local system over the image of $p_{\underline\lambda}$, which is a subtorus of $T$.
\end{enumerate}
\end{prop}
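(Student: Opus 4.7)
The plan is to prove both parts via a K\"unneth decomposition of the Mellin transform, combined with a cleanness argument in part~(2) under the $\sigma$-positivity hypothesis.

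For part~(1), the key observation is that both $\tr:\bG_m^r\to\bG_a$ and $\pr_{\underline\lambda}:\bG_m^r\to T$ are group homomorphisms compatible with the coordinate product decomposition of $\bG_m^r$. Since $\mL_\psi$ is an additive character sheaf on $\bG_a$, for any tame character $\chi\in\calC(T)(\barQ)$ the pullbacks split as external tensor products
\[
\tr^*\mL_\psi\is\mL_\psi\boxtimes\cdots\boxtimes\mL_\psi,\qquad \pr_{\underline\lambda}^*\mL_\chi\is\mL_{\chi\circ\lambda_1}\boxtimes\cdots\boxtimes\mL_{\chi\circ\lambda_r}.
\]
Combining the projection formula with K\"unneth, the stalk of the Mellin transform at $\chi$ factors as
\[
i_\chi^*\calM_*(\Phi^*_{\underline\lambda})\is\bigotimes_{i=1}^r R\Gamma\bigl(\bG_m,\mL_\psi\otimes\mL_{\chi\circ\lambda_i}\bigr)[1],
\]
and analogously for $\calM_!(\Phi_{\underline\lambda})$ with $R\Gamma$ replaced by $R\Gamma_c$. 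A classical computation using acyclicity of $\mL_\psi$ on $\bbA^1$ and the long exact sequence associated to the open embedding $\bG_m\subset\bbA^1$ shows that for any tame character $\chi'$ of $\bG_m$, both $R\Gamma(\bG_m,\mL_\psi\otimes\mL_{\chi'})$ and $R\Gamma_c(\bG_m,\mL_\psi\otimes\mL_{\chi'})$ are one-dimensional concentrated in cohomological degree one; after the $[1]$-shift each factor lies in degree zero. The nontriviality of each $\lambda_i$ ensures that no degenerate reduction to a shifted skyscraper occurs in the pushforward. Combining this pointwise degree-zero statement with the $t$-exactness of $\calM_*$ (property~(3) of Section~\ref{Mellin}) yields the perversity of $\Phi^*_{\underline\lambda}$, and the perversity of $\Phi_{\underline\lambda}$ follows by Verdier duality, using $\bbD\,\Phi_{\underline\lambda,\psi}\is\Phi^*_{\underline\lambda,\psi^{-1}}$.

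For part~(2), I would factor $\pr_{\underline\lambda}=i\circ q$, where $q:\bG_m^r\to T':=\on{Im}(\pr_{\underline\lambda})$ is a surjective homomorphism onto the image subtorus and $i:T'\hookrightarrow T$ is the closed embedding. Since $i_!=i_*$ for closed embeddings, the forget-supports map $\Phi_{\underline\lambda}\to\Phi^*_{\underline\lambda}$ is obtained by $i$-pushforward from the natural map $q_!(\tr^*\mL_\psi[r])\to q_*(\tr^*\mL_\psi[r])$ on $T'$. The $\sigma$-positivity assumption supplies a useful compactification: the composite $\sigma\circ\pr_{\underline\lambda}:\bG_m^r\to\bG_m$, $(x_i)\mapsto\prod x_i^{\langle\sigma,\lambda_i\rangle}$, has strictly positive exponents, and hence extends to a morphism $\bbA^r\to\bbA^1$ sending the toric boundary $\bbA^r\setminus\bG_m^r$ to the origin. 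Using this extended map together with the acyclicity of the Artin-Schreier sheaf along the boundary coordinate strata of $\bbA^r$, one shows that the boundary contribution to the forget-supports map vanishes, yielding the desired isomorphism $\Phi_{\underline\lambda}\is\Phi^*_{\underline\lambda}$. Once this is established, the K\"unneth computation from part~(1) restricted to $T'$ shows that the stalk cohomology of $\Phi_{\underline\lambda}|_{T'}$ is locally constant, so $\Phi_{\underline\lambda}$ is a perverse local system on $T'$, as claimed.

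The main obstacle will be the cleanness argument in part~(2), namely the verification that the boundary contribution to the forget-supports map vanishes. This is essentially a generalization of Deligne's cleanness theorem for Kloosterman sheaves, and requires a careful analysis of the restrictions of $\tr^*\mL_\psi$ to the toric boundary strata of $\bbA^r$ adapted to the cocharacters $\underline\lambda$.
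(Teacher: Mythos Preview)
The paper does not give its own proof of this proposition; it simply cites \cite[Theorem~4.2]{BK2} for part~(1) and \cite[Appendix~B]{CN} for part~(2). Your outline for part~(2) is in fact the approach taken in \cite{CN}: one uses $\sigma$-positivity to extend the relevant map to a partial compactification and then kills the boundary contribution via the acyclicity of the Artin--Schreier sheaf, reducing to Deligne's cleanness for Kloosterman sheaves. You have correctly identified this as the substantive step.

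Your Mellin-transform argument for part~(1) is a different route from \cite{BK2}, and it is viable, but there is a genuine gap as written. You compute that for every $\chi$ the derived fiber $i_\chi^*\calM_*(\Phi^*_{\underline\lambda})$ is one-dimensional in degree zero, and then invoke property~(3) of Section~\ref{Mellin}. But that property says $\mF$ is perverse if and only if $\calM_*(\mF)$ is a coherent complex concentrated in degree zero, not merely that its fibers are. Having all derived fibers in degree zero does not by itself force a bounded coherent complex to sit in degree zero; you need to use that $\calC(T)$ is regular and that the fiber rank is constant (equal to~$1$), so that locally a minimal free resolution of the stalk must be a single free module. This is routine, but it is the missing step in your argument. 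Also, your sentence about nontriviality of the $\lambda_i$ preventing a ``degenerate reduction to a shifted skyscraper'' is vague: in fact your K\"unneth computation goes through verbatim even when some $\lambda_i$ is trivial (the corresponding factor $R\Gamma(\bG_m,\mL_\psi)[1]$ is still one-dimensional in degree zero), so the hypothesis plays no role in your argument for~(1) as stated.
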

\begin{proof}
This is \cite[Theorem 4.2]{BK2}, and \cite[Appendix B]{CN}
\end{proof}

\begin{example}
For $T=\bG_m$ and $\lambda_i=\id:\bG_m\to\bG_m$
for all $i$. Then each $\lambda_i$ is $\sigma$-positive for $\sigma=\id$, and the 
corresponding  perverse local system $\Phi_{\underline\lambda}$
on 
$\bG_m$ is the Kloosterman sheaf considered by Deligne in \cite{D}.
\end{example}

Let $\rS_{\underline\lambda}$ be the subgroup 
of the symmetric group $\rS_r$ consisting of 
permutations $\tau$ such that for all $i\in\{1,...,r\}$,
we have $\lambda_{\tau(i)}=\lambda_i$.
We have the following result due to Deligne \cite[Proposition 7.20]{D}:
\begin{prop}\label{sign}
The group $\rS_{\underline\lambda}$
acts on $\Phi_{\underline\lambda}$ (resp. $\Phi_{\underline\lambda}^*$) via the sign character 
$\sign_r:\rS_r\to\{\pm1\}$.
\end{prop}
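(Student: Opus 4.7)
The plan is to reduce in a few steps to a single stalk computation that bootstraps directly off the acyclicity $H^*_c(\bG_a,\mL_\psi)=0$. The $\rS_{\underline\lambda}$-equivariant structures on $\Phi_{\underline\lambda}, \Phi^*_{\underline\lambda}$ are immediate from the facts that $\rS_r$ permutes the coordinates of $\bG_m^r$, that $\tr$ is $\rS_r$-invariant, and that $\pr_{\underline\lambda}$ is $\rS_{\underline\lambda}$-invariant by the very definition of $\rS_{\underline\lambda}$. Verdier duality interchanges $\Phi_{\underline\lambda,\psi}$ and $\Phi^*_{\underline\lambda,\psi^{-1}}$ compatibly with $\rS_{\underline\lambda}$, so I only need to treat $\Phi_{\underline\lambda}$.

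First I would reduce to the case $r=2$, $T=\bG_m$, $\lambda_1=\lambda_2=\id$. Since $\rS_{\underline\lambda}$ is generated by transpositions $(i,j)$ with $\lambda_i=\lambda_j$, it suffices to show any such $\tau$ acts by $-1$. Separating those two coordinates from the rest and using the multiplicativity $\mL_\psi(a+b)\cong\mL_\psi(a)\otimes\mL_\psi(b)$ together with K\"unneth, one has
\[
\Phi_{\underline\lambda}\;\cong\;\Phi_{\{\lambda_i,\lambda_j\}}\,*_!\,\Phi_{\underline\lambda\setminus\{\lambda_i,\lambda_j\}},
\]
with $\tau$ acting only on the first factor; thus one may assume $r=2$ with $\lambda_1=\lambda_2=:\mu$. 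The map $\pr_{\{\mu,\mu\}}$ then factors as $\bG_m^2 \xrightarrow{m} \bG_m \xrightarrow{\mu} T$, so $\Phi_{\{\mu,\mu\}}=\mu_!\,\on{Kl}$ where $\on{Kl}:=m_!\tr^*\mL_\psi[2]$ is the classical Kloosterman sheaf on $\bG_m$, and the $\rS_2$-action is pushed forward from one on $\on{Kl}$ itself. It therefore suffices to prove $(\on{Kl})^{\rS_2}=0$ (equivalently, that $\rS_2$ acts on $\on{Kl}$ by $-1$).

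For this final step, since $|\rS_2|=2$ is invertible in $\overline{\mathbb Q}_\ell$, the isotypic projectors commute with $m_!$ and with proper base change, so it is enough to verify the vanishing at every stalk $y\in\bG_m$:
\[
H^*_c\bigl(m^{-1}(y),\,\tr^*\mL_\psi\bigr)^{\rS_2}\;=\;0.
\]
Identifying $m^{-1}(y)=\{x_1x_2=y\}\cong\bG_m$ via $x_1$, the function $\tr$ becomes $u:=x_1+y/x_1$ and $\rS_2$ acts by the involution $\iota\colon x_1\mapsto y/x_1$. Let $\rho\colon\bG_m\to\bG_a$, $x_1\mapsto u$, be the quotient map by $\iota$; a stalk-wise check at the (at most two) branch points of $\rho$ shows that $(\rho_!\overline{\mathbb Q}_\ell)^{\rS_2}\cong\overline{\mathbb Q}_{\ell,\bG_a}$. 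By the projection formula,
\[
H^*_c(\bG_m,\rho^*\mL_\psi)^{\rS_2}\;=\;H^*_c\bigl(\bG_a,\,\mL_\psi\otimes(\rho_!\overline{\mathbb Q}_\ell)^{\rS_2}\bigr)\;=\;H^*_c(\bG_a,\mL_\psi)\;=\;0,
\]
which is the desired acyclicity of the Artin-Schreier sheaf. I expect the only delicate point to be the descent identity for $(\rho_!\overline{\mathbb Q}_\ell)^{\rS_2}$ in residue characteristic $p\neq 2$; once that is settled, the sign action collapses onto the same vanishing that motivates the entire paper.
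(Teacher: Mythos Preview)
Your argument is correct and is essentially Deligne's original proof, which the paper does not reproduce but merely cites as \cite[Proposition~7.20]{D}; the same idea (quotient by a transposition and reduce to $\oH^*_c(\bG_a,\mL_\psi)=0$) reappears in the paper's own Lemma~\ref{S_r-action}. Your caveat about $p=2$ is the only point needing care, but the identification of $\bG_m/\iota$ with $\bG_a$ via $x_1\mapsto x_1+y/x_1$ and the descent identity $(\rho_!\barQ)^{\rS_2}\cong\barQ$ go through there as well once one notes that $\rho$ is still finite and the invariant ring is still $k[u]$.
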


\subsection{$\rho$-Bessel sheaves on $T$}
We recall the construction of Braverman-Kazhdan's $\rho$-Bessel sheaves on $T$ 
attached to a representation of the dual group $\check G$.
Let $\rho:\check G\to \GL(V_\rho)$ be a $r$-dimensional complex representation of $\check G$.
The restriction of $\rho$ to $\check T$ is diagonalizable and there exists 
a collection of weights 
\[\underline\lambda=\{\lambda_1,...,\lambda_r\}\subset\bX^\bullet(\check T):=\Hom(\check T,\bC^\times)\]
such that there is an eigenspace decomposition 
\[V_\rho=\bigoplus_{i=1}^r V_{\lambda_i}\]
of $V_\rho$, where $\check T$ acts on $V_{\lambda_i}$ via the character $\lambda_i$.

We can regard $\underline\lambda$ as collection of 
cocharacters of $T$ using the the canonical isomorphism
$\bX^\bullet(\check T)\is\bX_\bullet(T)$
and we denote by 
\[\Phi_{T,\rho}:=\Phi_{\underline\lambda},\ \ \ \Phi^*_{T,\rho}:=\Phi_{\underline\lambda}^*\]
the hypergeometric sheaves associated to 
$\underline\lambda$ in~\eqref{Hyper}. We will call 
them 
$\rho$-Bessel sheaves.

Following \cite{BK2} (see also \cite{CN}), we shall construct a $\rW$-equivariant structure on $\rho$-Bessel sheaves.
Let $\{\lambda_{i_1},...,\lambda_{i_k}\}$ be the distinct cocharacters appearing in $\ul$ and 
$m_l$ be the multiplicity of $\lambda_{i_l}\in\ul$.
Let $A_m=\{\lambda_j|\lambda_j=\lambda_{i_m}\}$.
Then we have $\{\lambda_1,...,\lambda_r\}=A_1\sqcup...\sqcup A_k$.
The symmetric group on $r$-letters $\mathrm S_r$ acts naturally on $\{\lambda_1,...,\lambda_r\}$
and we define  $\rS_\ul=\{\sigma\in\rS_r|\sigma(A_i)=A_i\}$. 
There is
a canonical isomorphism 
\[\rS_\ul\is\rS_{m_1}\times\cdot\cdot\cdot\times\rS_{m_k}.\]
Define $\rS'_\ul=\{\eta\in\rS_r|\text{such that}\ \eta(A_i)=A_{\tau(i)}\ \text{for a}\ \tau\in\rS_k\}$.
We have a natural map $\pi_k:\rS_\ul'\ra\rS_k$ sending $\eta$ to $\tau$. The 
kernel of $\pi_k$ is isomorphic to $\rS_\ul$ and 
its image, denote by $\rS_{k,\ul}$, 
consists of $\tau\in\rS_k$ such that $m_i=m_{\tau(i)}$. 
In other words, there is a short exact sequence 
\[0\ra\rS_\ul\ra\rS_\ul'\stackrel{\pi_k}\ra\rS_{k,\ul}\ra 0.\] 

Notice that the Weyl group $\rW$ acts on $\{\lambda_{i_1},...,\lambda_{i_k}\}$
and the induced map $\rW\ra\rS_k$ has image $\rS_{k,\ul}$.
So we have a map $\rho:\rW\ra\rS_{k,\ul}$. Pulling back the short exact sequence 
above along $\rho$, we get an extension of $W'$ of $W$ by $\rS_\ul$
\[0\ra\rS_\ul\ra W'\ra W\ra 0\] 
where an element in $w'\in W'$ consists of pair $(w,\eta)\in\rW\times\rS_\ul'$ such
that $\rho(w)=\pi_k(\eta)\in\rS_{k,\ul}$.

The group $\rW'$ acts on $\bG_m^r$ (resp. $T$) via the composition of 
the action of $\rS_r$ (reps. $\rW$) with the natural projection $\rW'\ra\rS_\ul'\subset\rS_r$ 
(resp. $\rW'\ra\rW$) and the map $\pr_\ul:\bG_m^r\ra T$ and $\tr:\bG_m^r\ra\bG_a$ is $W'$-equivaraint
where $\rW'$ acts trivially on $\bG_a$. 

Since $\Phi_{T,\rho}=\pr_{\ul,!}\tr^*\mL_\psi[r]$
(resp. $\Phi_{T,\rho}^*=\pr_{\ul,*}\tr^*\mL_\psi[r]$), the discussion above 
implies for each $w'=(w,\eta)\in\rW'$ there is an isomorphism 
\beq\label{i'_w'}
i'_{w'}:\Phi_{T,\rho}\is w^*\Phi_{T,\rho}
\eeq 
\[(\text{resp.}\  i'_{w'}:\Phi^*_{T,\rho}\is w^*\Phi^*_{T,\rho}).\]
We define 
\beq\label{i_w'}
i_{w'}=\sign_\rW(w)\on{sign}_r(\eta)i'_{w'}:\Phi_{T,\rho}\is w^*\Phi_{T,\rho}
\eeq
\[(\text{resp.}\ \ \  i_{w'}=\sign_\rW(w)\on{sign}_r(\eta)i'_{w'}:\Phi^*_{T,\rho}\is w^*\Phi^*_{T,\rho})
\]
where $\on{sign}_r$ is the sign character of $\rS_r$.
It follows from Proposition \ref{sign} that the isomorphism $i_{w'}$ depends only on $w$. Denote the resting isomorphism by $i_w$,
then the data $(\Phi_{T,\rho},\{i_w\}_{w\in\rW})$ (resp. $(\Phi^*_{T,\rho},\{i_w\}_{w\in\rW})$) defines an object in $\sD_\rW(T)$ which, by abuse of notation, we still denote by
$\Phi_{T,\rho}$ (resp. 
$\Phi_{T,\rho}^*$).


\begin{example}
Consider the case $G=\GL_r$ and 
$\rho$ is the standard representation of 
$\check G=\GL_r(\bbC)$.
We have $\Phi_{T,\rho}=\tr^*\mL_\psi[r]\in\sD_\rW(T)$,
where $\tr:\bG_m^r\to \bG_a$ is the trace map. 
\end{example}
\quash{
Here the first main result of the article:
\begin{thm}\label{gamma is central}
For any tame character 
$\chi$ of $T$, the action of $\rW'_\chi$ on 
\[\oH_c^*(T,\Phi_{T,\rho}\otimes\mL_\chi)  \ \ \ \ (\text{resp.}\ \  \oH^*(T,\Phi^*_{T,\rho}\otimes\mL_\chi))\] is trivial. 
In particular, 
the gamma sheaf $\Phi_{T,\rho}$ (resp. $\Phi^*_{T,\rho}$)
 is central (resp. $*$-central)
\end{thm}

We can restate Theorem \ref{gamma is central} in the following form:
Note that the assignment $\rho\in\on{Rep}(\check G)$ to $\Phi_{T,\rho}^!$ (resp. $\Phi^*_{T,\rho}$) defines a functor
\beq
\mathrm{BK_!}:\on{Rep}(\check G)\to D(\rW\backslash T)
\eeq
\[
(\text{resp.}\ \ \mathrm{BK_*}:\on{Rep}(\check G)\to D(\rW\backslash T))
.\]

\begin{thm}
\end{thm}
}

\section{Proof of Theorem \ref{main 1}}
Recall the maps
\[\bG_a\stackrel{\tr}\longleftarrow\bG_m^r\stackrel{\pr_{\underline\lambda}}\longrightarrow T.\]
Let $\chi\in\calC(\bG_m^r)(\barQ)$ be a tame character.
The permutation action of $\rS_r$ on $\bG_m^r$ induces an action of
$\rS_r$ on $\calC(\bG_m^r)$ and let
$\rS_{r,\chi}$ be the stabilizer of $\chi$ in $\rS_r$.
The pullback $\tr^*\mL_\psi$ is naturally $\rS_r$-equivariant and 
we have a natural $\rS_{r,\chi}$-action on 
the cohomology 
$\oH^*(\bG_m^r,\tr^*\mL_\psi\otimes\mL_\chi)$
(resp. $\oH_c^*(\bG_m^r,\tr^*\mL_\psi\otimes\mL_\chi)$).

\begin{lemma}\label{S_r-action}
The $\rS_{r,\chi}$-action on $\oH_c^*(\bG_m^r,\tr^*\mL_\psi\otimes\mL_\chi)$ is given by $\sign_r:\rS_{r,\chi}\to\{\pm1\}$.
\end{lemma}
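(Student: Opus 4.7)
The plan is to reduce via Künneth to a product of $\bG_m$-factors, verify that each factor's compactly supported cohomology is concentrated in degree $1$ (and is one-dimensional), and then read off the sign character from the graded symmetry of the Künneth isomorphism.

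First, since any tame character $\chi$ of $\bG_m^r$ decomposes as an $r$-tuple $(\chi_1,\dots,\chi_r)$ of tame characters of $\bG_m$, we have $\mL_\chi\cong\mL_{\chi_1}\boxtimes\cdots\boxtimes\mL_{\chi_r}$. Combined with the obvious factorization $\tr^*\mL_\psi\cong (\mL_\psi|_{\bG_m})\boxtimes\cdots\boxtimes(\mL_\psi|_{\bG_m})$ coming from $\tr(x_1,\dots,x_r)=x_1+\cdots+x_r$, the sheaf $\tr^*\mL_\psi\otimes\mL_\chi$ is the external product $\boxtimes_{i=1}^{r}(\mL_\psi|_{\bG_m}\otimes\mL_{\chi_i})$. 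The Künneth formula then yields an $\rS_{r,\chi}$-equivariant isomorphism
\[
\oH_c^*(\bG_m^r,\tr^*\mL_\psi\otimes\mL_\chi)\;\cong\;\bigotimes_{i=1}^{r} V_i,\qquad V_i:=\oH_c^*(\bG_m,\mL_\psi|_{\bG_m}\otimes\mL_{\chi_i}).
\]

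Next I would show each $V_i$ is concentrated in degree $1$ and is one-dimensional. When $\chi_i$ is trivial, this follows from the Artin--Schreier acyclicity $\oH_c^*(\bG_a,\mL_\psi)=0$ combined with the excision triangle for $\{0\}\hookrightarrow\bG_a\hookleftarrow\bG_m$. When $\chi_i$ is a non-trivial tame character, the sheaf $\mL_\psi|_{\bG_m}\otimes\mL_{\chi_i}$ is ramified at both $0$ (by $\chi_i$) and $\infty$ (by $\psi$), which forces $\oH^0_c=\oH^2_c=0$, while the Grothendieck--Ogg--Shafarevich formula gives $\dim\oH^1_c=1$.

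Finally, any $\sigma\in\rS_{r,\chi}$ satisfies $\chi_{\sigma(i)}=\chi_i$, so under the Künneth decomposition it permutes the tensor factors through the tautological identifications $V_{\sigma(i)}\cong V_i$. Because each $V_i$ sits in odd total degree, the Koszul sign rule built into the Künneth isomorphism makes each elementary transposition act by $-1$; hence an arbitrary $\sigma$ acts by $\sign_r(\sigma)$ on $V_1\otimes\cdots\otimes V_r$. This is in effect the fiberwise incarnation of Deligne's argument recalled in Proposition \ref{sign}.

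The main obstacle is bookkeeping: one must verify that the $\rS_{r,\chi}$-equivariant structure used to define the cohomological action truly agrees, under the external product decomposition, with the naive permutation of factors (so that the graded sign from Künneth is the only contribution). Once these equivariant identifications are matched compatibly, the conclusion is an immediate consequence of graded symmetry.
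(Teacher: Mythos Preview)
Your argument is correct, but it follows a different path from the paper's proof. The paper argues one transposition at a time: for $\sigma=(i,j)\in\rS_{r,\chi}$ it passes to the quotient $\sigma\backslash\backslash\bG_m^r\cong\bbA^1\times\bG_m\times\bG_m^{r-2}$ via $(x_i,x_j)\mapsto(x_i+x_j,x_ix_j)$, identifies the $\sigma$-invariants of the pushforward using the multiplicativity $\mL\boxtimes\mL\cong m^*\mL$, and kills those invariants with the bare Artin--Schreier acyclicity $\oH_c^*(\bbA^1,\mL_\psi)=0$. In particular the paper never computes the cohomology groups themselves and never invokes Grothendieck--Ogg--Shafarevich.

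Your K\"unneth-plus-Koszul-sign route is arguably more transparent once you have the one-dimensionality of each $V_i$ in odd degree, and it incidentally yields the stronger statement that $\oH_c^*(\bG_m^r,\tr^*\mL_\psi\otimes\mL_\chi)$ is one-dimensional, concentrated in degree $r$. The trade-off is that you need the GOS computation for nontrivial $\chi_i$, whereas the paper's quotient argument is more elementary in its inputs (only Artin--Schreier vanishing) and closer in spirit to Deligne's original proof of Proposition~\ref{sign}. Your ``main obstacle'' is not really an obstacle: since $\chi_{\sigma(i)}=\chi_i$ the factors $\mL_{\chi_i}$ and $\mL_{\chi_{\sigma(i)}}$ are literally the same sheaf, so the equivariant structure on $\mL_\chi$ is the tautological permutation of identical boxes, and the Koszul sign is the only contribution.
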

\begin{proof}
Let 
$\sigma=(i,j)\in\rS_{r,\chi}$ 
be a simple reflection. It suffices to show that 
$\sigma$
acts on $\oH^*(\bG_m^r,\tr^*\mL_\psi\otimes\mL_\chi)$ by $-1$, or equivalently, the $\sigma$-invariant 
$\oH_c^*(\bG_m^r,\tr^*\mL_\psi\otimes\mL_\chi)^\sigma$
is zero.
Write $\mL_\chi=\mL_1\boxtimes\cdot\cdot\cdot\boxtimes\mL_r$,
where each $\mL_k$ is a tame local system on $\bG_m$.
Note that $\sigma^*\mL_\chi\is\mL_\chi$ implies 
$\mL_i\is\mL_j:=\mL$. Consider the quotient map
\[q:\bG_m^r\to\sigma\backslash\backslash\bG_m^r\is\bbA^1\times\bG_m\times\prod_{k\in\{1,...,r\}, k\neq i,j}\bG_m,\]
given by
\[q(x_1,...,x_r)=(x_i+x_j,x_ix_j,\prod_{k\in\{1,...,r\}, k\neq i,j} x_k).\]
Using the fact that $\mL\boxtimes\mL\is m^*\mL$, where $m:\bG_m^2\to\bG_m$ is the multiplication map, we see that 

\[\tr^*\mL_\psi\otimes\mL_\chi\is q^*\big(\mL_\psi\boxtimes\mL\boxtimes(\tr^*\mL_\psi\otimes\prod_{k\neq i,j}\mL_k)\big).\]
The permutation $\sigma$ acts naturally on 
$q_*(\tr^*\mL_\psi\otimes\mL_\chi)$ and it follows from the isomorphism above that 
\[(q_*(\tr^*\mL_\psi\otimes\mL_\chi))^\sigma\is
(q_*q^*\big(\mL_\psi\boxtimes\mL\boxtimes(\tr^*\mL_\psi\otimes\prod_{k\neq i,j}\mL_k)\big))^\sigma\is
\mL_\psi\boxtimes\mL\boxtimes(\tr^*\mL_\psi\otimes\prod_{k\neq i,j}\mL_k).\]
This implies 
\[\oH_c^*(\bG_m^r,\tr^*\mL_\psi\otimes\mL_\chi)^\sigma\is\oH_c^*(\sigma\backslash\backslash\bG_m^r,
\mL_\psi\boxtimes\mL\boxtimes(\tr^*\mL_\psi\otimes\prod_{k\neq i,j}\mL_k))=0\]
where the last equality follows from the cohomology vanishing 
$\oH_c^*(\bbA^1,\mL_\psi)=0$.
The lemma follows.

\end{proof}

To proceed,
let $\chi\in\calC(T)(\barQ)$ and 
let $\chi'=\pr_{\underline\lambda}^*\chi\in\calC(\bG_m^r)(\barQ)$ be the 
pull back of $\chi$.
We have $\pr_{\underline\lambda}^*\mL_{\chi}\is\mL_{\chi'}$ and
\beq\label{action}
\oH_c^*(T,\Phi_{T,\rho}\otimes\mL_\chi)\is
\oH_c^*(T,\pr_{\underline\lambda,!}\tr^*\mL_\psi[r]\otimes\mL_\chi)\is
\oH_c^*(\bG_m^r,\tr^*\mL_\psi[r]\otimes\mL_{\chi'})
\eeq
where the second isomorphism comes from the projection formula.
Let $w\in\rW'_\chi$ and choose a lift $w'=(w,\eta)\in\rW'$ of $w$.
Note that the following diagram is commutative
\[\xymatrix{\bG_m^r\ar[r]^{\pr_{\underline\lambda}}\ar[d]^{\eta}&T\ar[d]^w\\
\bG_m^r\ar[r]^{\pr_{\underline\lambda}}&T}.\]
It follows that 
$\eta^*\mL_{\chi'}\is\eta^*\pr_{\underline\lambda}^*\mL_\chi\is
\pr_{\underline\lambda}^*w^*\mL_\chi\is\pr_{\underline\lambda}^*\mL_\chi\is\mL_{\chi'}$, that is, $\eta\in\rS_{r,\chi'}$.
Moreover, we have the following commutative 
diagram
\[\xymatrix{\oH_c^*(T,\Phi_{T,\rho}\otimes\mL_\chi)\ar[r]^{}\ar[d]^{i_{w'}'}&\oH_c^*(\bG_m^r,\tr^*\mL_\psi[r]\otimes\mL_{\chi'})\ar[d]^\eta\\
\oH_c^*(T,\Phi_{T,\rho}\otimes\mL_\chi)
\ar[r]&\oH_c^*(\bG_m^r,\tr^*\mL_\psi[r]\otimes\mL_{\chi'})
}\]
where the horizontal arrows are 
the isomorphism~\eqref{action}, the left vertical arrow is the isomorphism induced by 
the isomorphism the $i'_{w'}$
in
~\eqref{i'_w'}, and the right vertical arrow is the 
action of $\eta$ on $\oH_c^*(\bG_m^r,\tr^*\mL_\psi[r]\otimes\mL_{\chi'})$ coming from the $\rS_{r,\chi'}$-equivariant structure 
on $\tr^*\mL_\psi[r]\otimes\mL_{\chi'}$.
Therefore, by Lemma \ref{S_r-action}, we see that $i'_{w'}=\sign_r(\eta)$
and it follow from the definition of $\rW$-equivariant structure 
of $\rho$-Bessel sheaf in~\eqref{i_w'} that the action of $w$ on 
the cohomology group $\oH_c^*(T,\Phi_{T,\rho}\otimes\mL_\chi)$
is given by 
\[i_{w'}=\sign_\rW(w)\sign_r(\eta)i'_{w'}=\sign_\rW(w)\sign_r(\eta)\sign_r(\eta)=\sign_\rW(w).\]
The proof of Theorem \ref{main 1} is complete.

\section{Proof of Theorem \ref{main 2}}\label{GL_n case}
In this section we shall prove  
the vanishing conjecture for $G=\GL_n$.

\subsection{The $\GL_2$-example}
Let us first consider the simple but important case $G=\GL_2$. 
Let $\mF\in\sD_\rW(T)$ be a $\rW$-equivariant complex on $T=\bG_m^2$
and let $\Phi_\mF=\Ind_{T\subset B}^G(\mF)^\rW$.
Note that, for $x\in G\setminus B$, the coset $Ux\subset G^{\text{reg}}$ consists of regular elements. Note also that 
the Grothendieck-Springer simultaneous resolution 
is Cartesian over $G^{\on{reg}}$:
\beq\label{G-S resolution}
\xymatrix{\widetilde G^{\on{reg}}\is G^{\on{reg}}\times_{\rW\backslash\backslash T}T\ar[r]^{\ \ \ \ \ \ \ \ \ \tilde q}\ar[d]^{\tilde c\ \ }&T\ar[d]^q\\
G^{\on{reg}}\ar[r]^c&\rW\backslash\backslash T}
\eeq
All together, we obtain 
\[\oH_c^*(Ux,\Phi_\mF)\is\oH_c^*(Ux,\tilde c_!\tilde q^*\mF)^\rW\is
\oH_c^*(Ux,c^*q_!\mF)^\rW\]
Under the identification $\rW\backslash\backslash T\is\bG_a\times\bG_m$,
the maps $c$ (resp. $q$) is given by $c(g)=(\on{tr}_G(g),\det(g))$
(resp. $q(t)=(\tr_T(t),\det(t))$), and 
a direct calculation shows that 
$c$ restricts to an isomorphism 
\beq\label{key iso GL_2}
c:Ux\is \bG_a\times\on{det}(x)\subset\bG_a\times\bG_m\is \rW\backslash\backslash T.
\eeq
Thus we have
\beq\label{key iso}
\oH_c^*(Ux,\Phi_\mF)\is\oH_c^*(Ux,c^*q_!\mF)^\rW\is (m_!\mF)^\rW|_{\det(x)},
\eeq
where $m=\pr_{\bG_m}\circ q:T=\bG_m^2\to\bG_m, (x,y)\to xy$.
As $\det|_{G\setminus B}:G\setminus B\to\bG_m$ is surjective, it follows from~\eqref{key iso}
that 
$\oH_c^*(Ux,\Phi_\mF)=0$ for all $x\in G\setminus B$
 if and only if $\rW$ acts on $m_!\mF$ via the sign character.
 We claim that the later property of $m_!\mF$ is equivalent 
 to $\mF$ being central.  Thus we conclude that 
 $\oH_c^*(Ux,\Phi_\mF)=0$ for all $x\in G\setminus B$
 if and only if $\mF$ is central. This completes the proof of the vanishing conjecture for 
$\GL_2$.

To prove the claim we observe that
$\rW$ acts on $m_!\mF$ via the sign character if and only if 
$(m_!\mF)^\rW=0$. By \cite[Proposition 3.4.5]{GL}, $(m_!\mF)^\rW=0$ is equivalent to
\beq\label{vanishing}
 \oH_c^*(\bG_m,(m_!\mF)^\rW\otimes\mL')\is\oH_c^*(\bG_m,m_!\mF\otimes\mL')^\rW\is
 \oH_c^*(T,\mF\otimes m^*\mL')^\rW=0
 \eeq
 for all tame local system $\mL'$ on $\bG_m$. 
 Note that 
 we have $\rW_\chi\neq e$ if and only if 
 $\mL_\chi\is\mL'\boxtimes\mL'\is m^*\mL'$ for some tame local system
 $\mL'$ on $\bG_m$, thus~\eqref{vanishing} is equivalent to the condition that 
$\rW_\chi$ acts on
 $\oH_c^*(T,\mF\otimes\mL_\chi)$ via the sign character for any 
 tame character $\chi$, that is,
 $\mF$ is central. The claim follows.

We shall generalize the proof above for $\GL_2$ to $\GL_n$.
The argument 
involves mirabolic subgroups of $\GL_n$
as an essential ingredient.


\subsection{Mirabolic subgroups}\label{mirabolic}
We recall some geometric facts about Mirabolic subgroups, established in \cite{CN},
that will be used in the proof of Theorem \ref{main 2}.\footnote{
As mentioned in \emph{loc. cit.} the geometry of the conjugation action of Mirabolic has been described by Bernstein in \cite{B}.}

Let $V=\bbA^n$ be the standard $n$-dimensional vector space over 
$k$ with the standard basis $e_1,...,e_n$.
For any $1\leq m\leq n$
We define $F_m$ (resp. $E_m$) to be the subspace generated by $e_1,...,e_m$
(resp. $e_{m+1},...,e_n$).

Let $Q$ be the mirabolic subgroup of $G=\GL(V)$ consisting of 
$g\in G$ fixing the line generated by $v:=e_1$.
Let $U_Q$ be the unipotent radical of $Q$.
Consider the $Q$-conjugation equivariant stratification of $G$
\beq\label{X_m}
G=\bigsqcup_{m=1}^n X_m,
\eeq
where $X_m$ the subset of $G$ consisting of $g\in G$
such that the span of the vectors $v,gv,g^2v,...$ is of dimension $m$.

\begin{lemma}\label{det}
Consider the Chevalley quotient map $c:G\to \rW\backslash\backslash T\is\bbA^{n-1}\times\bG_m$, $c(x)=(a_1,...,a_n)$
where $t^n+a_1t^n+\cdot\cdot\cdot+a_n$ is the characteristic polynomial of $x\in G$
Let $x\in X_n$. Then the map 
$u\to c(ux)-c(x)$ induces a linear isomorphism
\[U_Q\is\bbA^{n-1}\times\{0\}\subset\bbA^{n-1}\times\bG_m\]
between $U_Q$ and the subspace of $\bbA^{n-1}\times\bG_m$
defined by $a_n=0$.
In particular, the map $c$ restricts to an isomorphism 
$U_Qx\is\bbA^{n-1}\times\{a_n\}\subset\bbA^{n-1}\times\bG_m$.

\end{lemma}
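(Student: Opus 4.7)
The plan is to identify $U_Q$ concretely and then compute $c(ux) - c(x)$ explicitly via a rank-one update of the characteristic polynomial.

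First I would note that since $Q$ stabilizes the line $k \cdot v$, the unipotent radical $U_Q$ consists of operators $u = \id + v \otimes \phi$ with $\phi \in V^*$ satisfying $\phi(v) = 0$ (here $v \otimes \phi$ denotes the rank-one map $w \mapsto \phi(w) v$). This exhibits $U_Q$ as an $(n-1)$-dimensional linear space, and since $U_Q \subset \SL_n$ we have $\det(ux) = \det(x)$, so the $a_n$-component of $c(ux) - c(x)$ vanishes; this already establishes that the image lies in $\bbA^{n-1} \times \{0\}$.

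Next I would compute the change in characteristic polynomial. Writing $ux = x + v \otimes (\phi \circ x)$ and applying the matrix determinant lemma gives
\[
\chi_{ux}(t) - \chi_x(t) \;=\; -\phi\bigl(x \cdot \on{adj}(tI - x) \cdot v\bigr),
\]
which is manifestly linear in $\phi$. So the assignment $u \mapsto c(ux) - c(x)$ is a linear map between two $(n-1)$-dimensional spaces (the target being polynomials in $t$ of degree $\leq n-1$ with zero constant term), and it suffices to prove injectivity.

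The core step is to unpack the vector-valued polynomial $\on{adj}(tI - x) v = \sum_{j=0}^{n-1} w_j t^j \in V[t]$. From the identity $(tI - x)\on{adj}(tI - x) v = \chi_x(t) \cdot v$, comparing coefficients yields the triangular formula $w_j = x^{n-1-j} v + a_1 x^{n-2-j} v + \cdots + a_{n-1-j} v$; in particular $w_{n-1} = v$, and multiplying by $x$ and invoking Cayley-Hamilton gives $xw_0 = -a_n v$ along with $xw_j = w_{j-1} - a_{n-j} v$ for $j \geq 1$. The hypothesis $x \in X_n$ says precisely that $\{v, xv, \ldots, x^{n-1}v\}$ is a basis of $V$, hence by the triangular relation $\{w_0, \ldots, w_{n-1}\}$ is also a basis. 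For $\phi$ with $\phi(v) = 0$, the recursion collapses $\phi(x \cdot \on{adj}(tI - x) v)$ to $t \sum_{k=0}^{n-2} \phi(w_k) t^k$; vanishing of this polynomial forces $\phi(w_k) = 0$ for $k \leq n-2$, and combined with $\phi(w_{n-1}) = \phi(v) = 0$ it gives $\phi = 0$. A dimension count upgrades injectivity to an isomorphism, and the ''in particular'' statement is immediate since $a_n$ is constant on $U_Q x$.

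The only real obstacle I foresee is careful bookkeeping of signs and indices in the recursion for the $w_j$; conceptually the argument turns entirely on the observation that $x \in X_n$ is the exact condition making $\{w_j\}$ a basis of $V$, which is what drives injectivity.
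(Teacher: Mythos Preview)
Your argument is correct. The identification $U_Q=\{\id+v\otimes\phi:\phi(v)=0\}$ is the right one, the matrix determinant lemma gives the formula $\chi_{ux}(t)-\chi_x(t)=-\phi\bigl(x\cdot\on{adj}(tI-x)\,v\bigr)$ as you claim, and the recursion for the $w_j$ together with the cyclicity hypothesis $x\in X_n$ indeed shows $\{w_0,\dots,w_{n-1}\}$ is a basis, from which injectivity (hence bijectivity) follows.

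The paper itself gives no argument here: it simply cites \cite[Proposition 4.1]{CN}. So there is no ``paper's proof'' to compare your approach against; you have supplied a clean self-contained proof where the paper defers to an external reference. Your use of the adjugate recursion is essentially the classical computation underlying this result, and the only care needed is exactly the sign/index bookkeeping you already flagged.
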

\begin{proof}
This is \cite[Proposition 4.1]{CN}. 
\end{proof}
\begin{remark}
The lemma above is a generalization of the isomorphism in~\eqref{key iso GL_2}
to the setting of mirabolic subgroups.
\end{remark}

\begin{lemma}\label{linear algebra}
Let $x\in X_m$. There exists an element $q\in Q$ such that 
$qxq^{-1}$ is of the form 
\beq\label{standard form}
\begin{bmatrix}
    x_{F_m} & y\\
    0 & x_{E_m} 
\end{bmatrix}
\eeq
where $x_{E_m}\in\GL(E_m)$, and 
$x_{F_m}\in\GL(F_m)$ has the form of a companion matrix 
\beq\label{companion}
x_{F_m}=\begin{bmatrix}
    0  & 0&  \dots  &0 & 0&-a_{m} \\
      1  &0&   \dots     & 0  & 0  &-a_{m-1} \\
      0  &1&   \dots      & 0 & 0  &-a_{m-2} \\
      \vdots &   \vdots   &   \ddots   & \vdots&\vdots&\vdots  \\
      0   &    0& \dots  &1   &0    & -a_{2}\\
      0  &    0& \dots  &0   &1    & -a_{1}
    \end{bmatrix}
\eeq

\end{lemma}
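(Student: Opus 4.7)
The plan is to use the standard construction of a cyclic basis for $W := \on{span}(v, xv, \ldots, x^{m-1}v) \subset V$ and verify that the change-of-basis map can be chosen to lie in $Q$. The key observation is that the canonical cyclic basis for $W$ begins with $v$ itself, which forces the change-of-basis to fix $v$ and hence to lie in $Q$.

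By definition of $X_m$, the subspace $W$ has dimension exactly $m$. Moreover $W$ is $x$-stable, since $x^m v$ must lie in $W$ (otherwise the span $v, xv, \ldots, x^m v$ would already have dimension $m+1$, contradicting $x \in X_m$). Write
\[
x^m v = -a_m v - a_{m-1}(xv) - \cdots - a_1(x^{m-1} v)
\]
for scalars $a_1, \ldots, a_m \in k$. Choose any vector-space complement $W'$ to $W$ in $V$, with an arbitrary basis $w_{m+1}, \ldots, w_n$, and define $q \in \GL(V)$ by prescribing its inverse on the standard basis:
\[
q^{-1}(e_i) = x^{i-1} v \quad \text{for } 1 \le i \le m, \qquad q^{-1}(e_i) = w_i \quad \text{for } m < i \le n.
\]
Since $q^{-1}(e_1) = v = e_1$, we have $q(e_1) = e_1$, so $q$ fixes $v$ and in particular $q \in Q$.

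It remains to verify that $qxq^{-1}$ has the asserted block form. Because $x$ preserves $W$ and $q$ sends $W$ to $F_m$, the conjugate $qxq^{-1}$ preserves $F_m$, which gives the vanishing of the lower-left block. For the upper-left block, a direct computation yields $qxq^{-1}(e_i) = q(x \cdot x^{i-1} v) = q(x^i v) = e_{i+1}$ for $1 \le i < m$, and
\[
qxq^{-1}(e_m) = q(x^m v) = -a_m e_1 - a_{m-1} e_2 - \cdots - a_1 e_m,
\]
which is precisely the companion matrix in~\eqref{companion}. Finally, $x_{E_m}$ is the induced action of $qxq^{-1}$ on the quotient $V/F_m \cong E_m$; it lies in $\GL(E_m)$ because $qxq^{-1} \in G$ preserves $F_m$. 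As the argument is a routine application of linear algebra, there is no substantial obstacle; the only point worth emphasizing is that the canonical cyclic basis of $W$ begins with $v$ itself, which is exactly what forces $q$ into the mirabolic subgroup $Q$.
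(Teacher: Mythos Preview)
Your proof is correct and is exactly the standard linear-algebra argument the paper has in mind; the paper itself simply writes ``Straightforward exercise in linear algebra'' and leaves the details to the reader. You have carried out precisely those details, and the key point you highlight---that the cyclic basis of $W$ begins with $v=e_1$, forcing $q^{-1}(e_1)=e_1$ and hence $q\in Q$---is indeed the only thing one has to notice beyond the usual companion-matrix construction.
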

\begin{proof}
Straightforward exercise in linear algebra.
\end{proof}

Let $P_m$ be the parabolic group of $G$ consisting of 
$g$ such that $gF_m=F_m$. Let $L_m$ be the standard Levi subgroup of $P_m$ and
$U_{m}$
be the unipotent radical of $P_m$ consisting of matrices of the form
\[u_m=\begin{bmatrix}
    \Id_{F_m} & v_m\\
    0 & \Id_{E_m} 
\end{bmatrix}\]
where $u_m\in\Hom(E_m,F_m)$.
Let $U_1$ and $U_{m-1}$ be the subgroup of $U_{m}$ 
consisting of $u_m$ as above with $v_m\in\Hom(E_m,F_1)$
and $v_m\in\Hom(E_m,F_{m-1})$ respectively.
\begin{lemma}\label{key lemma}
Let $x_{F_m}\in\GL(F_m)$ be a linear map such that,
for any $1\leq j\leq m$,
$x_{F_m}(F_{j})\subset F_{j+1}$ and the induced map 
$F_j/F_{j-1}\to F_{j+1}/F_j$ is an isomorphism.
Let $x_{E_m}\in\GL(E_m)$ be an arbitrary linear map.
Then the action of $U_1\times U_{m-1}$ on the space of matrices of the form
\beq\label{standard form 2}
x=\begin{bmatrix}
    x_{F_m} & y\\
    0 & x_{E_m}
\end{bmatrix}
\eeq
given by $(u_1,u_{m-1})x=u_{m-1}u_1xu_{m-1}^{-1}$
is simply transitive.\footnote{Note that $u_{m-1}u_1xu_{m-1}^{-1}=u_1u_{m-1}xu_{m-1}^{-1}$.}

\end{lemma}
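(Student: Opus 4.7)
The plan is to reduce the statement to showing that a specific linear map between two vector spaces of equal dimension is an isomorphism, and then to verify injectivity using the cyclicity hypothesis on $x_{F_m}$.

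I would begin with a direct block computation. Write $u_{1}=\begin{bmatrix}I & w_{1}\\ 0 & I\end{bmatrix}$ with $w_1\in\Hom(E_m,F_1)$ and $u_{m-1}=\begin{bmatrix}I & w_{m-1}\\ 0 & I\end{bmatrix}$ with $w_{m-1}\in\Hom(E_m,F_{m-1})$. A straightforward multiplication shows that $u_{m-1}u_{1}xu_{m-1}^{-1}$ has the same diagonal blocks $x_{F_m}, x_{E_m}$ as $x$, while the upper-right block is $y+\Phi(w_1,w_{m-1})$, where
\[
\Phi:\Hom(E_m,F_1)\oplus\Hom(E_m,F_{m-1})\to\Hom(E_m,F_m),\ (w_1,w_{m-1})\mapsto w_1 x_{E_m}+w_{m-1}x_{E_m}-x_{F_m}w_{m-1}.
\]
Thus simple transitivity of the action on matrices of the form~\eqref{standard form 2} with the diagonal blocks fixed is equivalent to $\Phi$ being a linear isomorphism. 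A dimension count yields $\dim\Hom(E_m,F_1)+\dim\Hom(E_m,F_{m-1})=(n-m)+(n-m)(m-1)=(n-m)m=\dim\Hom(E_m,F_m)$, so it suffices to prove that $\Phi$ is injective.

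For injectivity I would argue by iterating a reduction modulo the flag $F_\bullet$. The hypothesis on $x_{F_m}$ is equivalent to saying that $e_1$ is a cyclic vector, with $F_j=\on{span}(e_1,x_{F_m}e_1,\ldots,x_{F_m}^{j-1}e_1)$ for every $j$. Suppose $\Phi(w_1,w_{m-1})=0$, i.e.\ $(w_1+w_{m-1})x_{E_m}=x_{F_m}w_{m-1}$. The left-hand side takes values in $F_{m-1}$, so reducing modulo $F_{m-1}$ kills the LHS and forces the $F_m/F_{m-1}$-component of $x_{F_m}w_{m-1}$ to vanish; by the induced isomorphism $x_{F_m}\colon F_{m-1}/F_{m-2}\is F_m/F_{m-1}$, this means $w_{m-1}$ factors through $F_{m-2}$. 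Iterating (reducing modulo $F_{m-k}$ at step $k$) successively shrinks the codomain of $w_{m-1}$ to $F_{m-1-k}$, and after $m-1$ steps yields $w_{m-1}=0$. Then $w_1 x_{E_m}=0$, and invertibility of $x_{E_m}\in\GL(E_m)$ gives $w_1=0$.

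I expect no real obstacles here: the argument is essentially routine linear algebra, and the iterative flag-reduction is driven transparently by the cyclicity of $e_1$ under $x_{F_m}$. The only bit of care needed is checking at each stage that the LHS still lies in the appropriate step of the flag so that the modular reduction does what it should.
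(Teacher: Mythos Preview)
Your proposal is correct and is essentially the argument the paper defers to \cite[Lemma 3.2]{CN}: compute the upper-right block as $y+w_1x_{E_m}+w_{m-1}x_{E_m}-x_{F_m}w_{m-1}$ (this is exactly the corrected formula in the paper's footnote), reduce to bijectivity of the linear map $\Phi$, and use the flag condition on $x_{F_m}$ to force $w_{m-1}=0$ step by step, then invertibility of $x_{E_m}$ to get $w_1=0$. The only point worth making explicit in your write-up is that at the $j$th step you need $F_1\subset F_{m-j}$ so that the left-hand side still lies in $F_{m-j}$; this holds precisely for $j\leq m-1$, which is exactly the number of steps required to reach $w_{m-1}\in\Hom(E_m,F_0)=0$.
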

\begin{proof}
This is \cite[Lemma 3.2]{CN}.\footnote{There is minor mistake in the computation of $u_1u_mxu_m^{-1}$ in \emph{loc. cit.}: 
is should be $u_1u_mxu_m^{-1}=\begin{bmatrix}
    x_{F} & y+v_1x_E+v_{m-1}x_E-x_Fv_{m-1}\\
    0 & x_{E} 
\end{bmatrix}$.
The same proof goes through after this minor correction.
}
\end{proof}

Let $Q_{L_m}=L_m\cap Q$ be a mirabolic subgroup of $L_m$
and let $U_{Q_{L_m}}$ be the unipotent radical of 
$Q_{L_m}$ consisting of matrices 
\[\begin{bmatrix}
    1&v&0&\\
    0 & \Id_{m-1}&0\\
    0&0& \Id_{n-m} 
\end{bmatrix}\]
where $v=(v_1,...,v_{m-1})\in\bbA^{m-1}$ is a row vector.

\begin{lemma}\label{key lemma 2}
Let $x\in X_m$ be as in~\eqref{standard form}
with $x_{F_m}$ being as in~\eqref{companion}.
The morphism 
\[U_Q\times U_{m-1}\to U_mU_{Q_{L_{m}}}x\]
given by
$(u_Q,u_{m-1})\to u_{m-1}u_Qxu_{m-1}^{-1}$
is an isomorphism.\footnote{The isomorphism was used in \cite[p17]{CN}.}
\end{lemma}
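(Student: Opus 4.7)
The plan is a direct matrix calculation using the block decomposition $V = F_m \oplus E_m$ and the companion form of $x_{F_m}$. Both source and target are affine spaces of dimension $(n-1) + (m-1)(n-m)$: for $U_Q \times U_{m-1}$ this is immediate; for $U_m U_{Q_{L_m}} x$ this follows from the identification of $U_m U_{Q_{L_m}}$ with the unipotent radical $U_{P_m \cap Q}$ of the parabolic $P_m \cap Q$. I would identify $U_{m-1}$ with the space of $m \times (n-m)$ matrices whose last row vanishes (consistent with $U_{m-1} \leftrightarrow \Hom(E_m, F_{m-1})$), and $U_Q$ with pairs $(v', w^Q) \in k^{m-1} \times k^{n-m}$. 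Since both sides are smooth affine varieties of the same dimension, it suffices to exhibit an algebraic two-sided inverse to the given morphism.

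The first step is the factorization
\[
u_{m-1}\, u_Q\, x\, u_{m-1}^{-1} \;=\; (u_{m-1}\, u_Q\, u_{m-1}^{-1}) \cdot (u_{m-1}\, x\, u_{m-1}^{-1}) \;=\; u_Q' \cdot u_m'' \cdot x,
\]
with $u_Q' \in U_Q$ and $u_m'' \in U_m$. The commutator $[u_Q, u_{m-1}]$ lies in $U_1 \subset U_Q$ by a short matrix computation, so $u_Q'$ differs from $u_Q$ only by a correction in $U_1$ that is bilinear in the parameters of $u_Q$ and $u_{m-1}$. Writing $u_{m-1} = I + N$, one has $N^2 = NxN = 0$ (the latter because the bottom-left block of $x$ vanishes), and a direct block calculation yields
\[
u_m'' \;=\; I + \bigl(W - x_{F_m}\, W\, x_{E_m}^{-1}\bigr),
\]
where $W$ is the upper-right block of $u_{m-1}$, padded with a zero $m$-th row. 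Here the companion form of $x_{F_m}$ enters crucially: since $(x_{F_m})_{i,j} = \delta_{i, j+1}$ for $j < m$, left multiplication by $x_{F_m}$ acts on $W$ as a ``shift down'' of rows. Combined with the vanishing of the $m$-th row of $W$, this forces the rows of $u_m'' - I$ to be $w_1,\; w_2 - w_1\, x_{E_m}^{-1},\;\ldots,\; w_{m-1} - w_{m-2}\, x_{E_m}^{-1},\; -w_{m-1}\, x_{E_m}^{-1}$.

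To finish, I parametrize $U_{P_m \cap Q}$ by pairs $(v'', W'')\in k^{m-1}\times M_{m\times(n-m)}$ and read off from the product $u_Q'\, u_m''$ that $v'' = v'$, that rows $2, \ldots, m$ of $W''$ coincide with the corresponding rows of $u_m'' - I$, and that $W''_1$ equals row $1$ of $u_m'' - I$ plus $w^Q$ and a correction in $(v', w)$ coming from the commutator. Inverting is then a strictly triangular cascade: $W''_m$ determines $w_{m-1} = -W''_m\, x_{E_m}$, then $W''_{m-1}$ determines $w_{m-2}$, and so on up to $w_1$; finally $W''_1$ determines $w^Q$. The main obstacle is bookkeeping rather than any deeper algebraic difficulty -- one must track the commutator correction carefully so the row-$1$ contribution from $w^Q$ is separated cleanly from the cascading data. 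Crucially, no spectral disjointness between $x_{F_m, 22}$ and $x_{E_m}$ is required, because the vanishing of the $m$-th row of $W$ -- the very feature distinguishing $U_{m-1}$ from $U_m$ -- makes the naturally occurring Sylvester-type operator $w \mapsto w - x_{F_m, 22}\, w\, x_{E_m}^{-1}$ triangular rather than genuinely spectral.
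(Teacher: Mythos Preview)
Your computation is correct, but it proceeds along a different line from the paper. The paper's proof is modular: it first factors $U_Q\cong U_1\times U_{Q_{L_m}}$, then observes that for each fixed $u\in U_{Q_{L_m}}$ the upper-left block $u_{F_m}x_{F_m}$ of $ux$ still satisfies the hypothesis of Lemma~\ref{key lemma} (it maps $F_j$ into $F_{j+1}$ inducing isomorphisms on graded pieces), so that lemma gives an isomorphism $U_1\times U_{m-1}\to U_m ux$ via $(u_1,u_{m-1})\mapsto u_{m-1}u_1ux\,u_{m-1}^{-1}$. Letting $u$ vary over $U_{Q_{L_m}}$ assembles these into the desired isomorphism. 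In other words, the paper treats Lemma~\ref{key lemma} as a black box and fibers over the $U_{Q_{L_m}}$-factor.

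Your approach instead absorbs the content of Lemma~\ref{key lemma} into the present argument: the ``triangular cascade'' recovering $w_{m-1},w_{m-2},\ldots,w_1$ from the rows of $u_m''-I$ is precisely what makes the $U_1\times U_{m-1}$-action simply transitive in that lemma, and your commutator computation $[u_{m-1},u_Q]\in U_1$ handles the mixing with the $U_{Q_{L_m}}$-part directly rather than by fibering. The upshot is that your proof is self-contained and makes the role of the companion form of $x_{F_m}$ completely explicit (the shift-down under left multiplication by $x_{F_m}$, combined with $w_m=0$, is exactly what makes the cascade triangular), at the cost of more bookkeeping. The paper's proof is shorter and more structural, but relies on the prior lemma imported from \cite{CN}.
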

\begin{proof}
Recall the subgroup $U_{1}$ of $U_m$ consisting of matrices of the form
\[\begin{bmatrix}
    \Id_{m}&v&\\
    0 & \Id_{m-n}&
 \end{bmatrix}\]
where $v\in\Hom(E_m,F_1)$.
Note that the multiplication map
\beq\label{factorization}
U_{1}\times U_{Q_{L_{m}}}\is U_Q,\ \ (u_1,u)\to u_1u
\eeq
is an isomorphism.  Note also that, for any $u\in U_{Q_{L_{m}}}$,
the $U_m$-orbit $U_mux$ through $ux$ consists of matrices of the form 
~\eqref{standard form 2} with $x_{F_m}$, $x_{E_m}$ fixed, and
$x_{F_m}$
satisfies the assumption 
of Lemma \ref{key lemma}.
Thus, by Lemma \ref{key lemma}, we have an isomorphism 
\[U_{1}\times U_{m-1}\to  U_mux\]
given by $(u_1,u_{m-1})\to u_{m-1}u_1uxu_{m-1}^{-1}$.
As $u$ varies over $U_{Q_{L_{m}}}$, we obtain the desired isomorphism 
\[U_Q\times U_{m-1}\stackrel{\eqref{factorization}}\is U_{1}\times U_{Q_{L_{m}}}\times U_{m-1}\to  U_mU_{Q_{L_{m}}}x,\ \ (u_Q=u_1u,u_{m-1})\to u_{m-1}u_1uxu_{m-1}^{-1}.\]

\end{proof}
\subsection{Proof of Theorem \ref{main 2}}
\quash{
\begin{lemma}\label{descent GL_n}
Assume $G=\GL_n$.
(1)
We have $\rW_\chi=\rW_\chi'$ for any tame character $\chi\in\calC(T)$.
(2) A complex $\mF\in\sD_\rW(T)$ is central (resp. $*$-central) if and only if 
$\calM_!(\mF\otimes\sign)$ (resp. $\calM_*(\mF\otimes\sign)$) descends to $\rW\backslash\backslash\calC(T)$.

\end{lemma}
\begin{proof}
The claim $\rW_\chi=\rW_\chi'$ can be checked directly and the second claim 
follows from Proposition \ref{characterization}.
\end{proof}
}
\begin{lemma}\label{reduction to perverse sheaf}
Assume $G=\GL_n$.
If Conjecture \ref{vanishing conj} is true for 
perverse central sheaves (resp. perverse $*$-central sheaves),
 then is it true for arbitrary central complexes (resp. arbitrary $*$-central complexes).\footnote{In fact, the Lemma remains true without the assumption $G=\GL_n$, see \cite[Lemma 7.5]{C}.}
\end{lemma}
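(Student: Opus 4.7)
The plan is a formal reduction by devissage along the perverse $t$-structure: decompose $\mF$ into its perverse cohomology sheaves, verify that each piece is again central (resp.\ $*$-central), apply the hypothesis to each perverse piece, and reassemble the conclusion via support propagation through distinguished triangles. The argument rests on three ingredients, of which only the second uses $G = \GL_n$.

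First, the functor $\Phi_\bullet: \mF \mapsto \Ind_{T \subset B}^G(\mF)^\rW$ is $t$-exact for the perverse $t$-structures. Indeed, $\Ind_{T \subset B}^G$ is $t$-exact by Proposition \ref{exactness}(1), and the $\rW$-invariant summand in the canonical decomposition~\eqref{decomposition} is a direct summand, hence $t$-exact as well. Consequently ${}^p\mathscr{H}^n(\Phi_\mF) \is \Phi_{{}^p\mathscr{H}^n(\mF)}$ for every $n$.

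Second, perverse cohomology preserves centrality in the $\GL_n$ case. For $G = \GL_n$ the center is connected, and a direct verification shows $\rW_\chi = \rW_\chi'$ for every tame character $\chi \in \calC(T)(\barQ)$. Proposition \ref{characterization}(3) then characterizes $*$-centrality by descent of $\calM_*(\mF \otimes \sign)$, componentwise, along the quotient by the appropriate stabilizer. Since $\calM_*$ is $t$-exact (property (3) of Mellin transforms in Section \ref{Mellin}), the cohomology sheaves of $\calM_*(\mF \otimes \sign)$ identify with $\calM_*({}^p\mathscr{H}^n(\mF) \otimes \sign)$; cohomology sheaves of a descended complex descend, so each ${}^p\mathscr{H}^n(\mF)$ is $*$-central whenever $\mF$ is. The central case follows by Verdier duality (Remark \ref{*=!}): $\mF$ is central iff $\bbD\mF$ is $*$-central, and $\bbD$ anti-commutes with perverse truncation, sending ${}^p\mathscr{H}^n(\mF)$ to ${}^p\mathscr{H}^{-n}(\bbD\mF)$.

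Finally, the conclusion of Conjecture \ref{vanishing conj} is equivalent to $\pi_!(\Phi_\mF)$ (resp.\ $\pi_*(\Phi_\mF)$) being supported on the closed subset $T \subset U \backslash G$, and this support condition is stable under extensions in distinguished triangles since restriction to the open complement is exact. Applying the hypothesis to each perverse ${}^p\mathscr{H}^n(\mF)$ gives that $\pi_!(\Phi_{{}^p\mathscr{H}^n(\mF)}) = \pi_!({}^p\mathscr{H}^n(\Phi_\mF))$ is supported on $T$; a finite induction along the perverse truncation triangles for the bounded complex $\Phi_\mF$ then yields the same for $\pi_!(\Phi_\mF)$, with the $*$-central case handled identically by $\pi_*$. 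The only real subtlety is the second ingredient, where the $\GL_n$ assumption enters through $\rW_\chi = \rW_\chi'$ to enable the Mellin-descent characterization; extending the reduction to general $G$ would require a different argument for the preservation of centrality under perverse truncation.
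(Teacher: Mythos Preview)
Your proposal is correct and follows essentially the same approach as the paper: devissage along the perverse $t$-structure, using $\rW_\chi = \rW_\chi'$ for $\GL_n$ together with Proposition~\ref{characterization}(3) and $t$-exactness of $\calM_*$ to show that perverse truncation preserves $*$-centrality, with Verdier duality handling the central case. The only cosmetic difference is that you invoke $t$-exactness of $\Phi_\bullet$ to identify ${}^p\mathscr{H}^n(\Phi_\mF)$ with $\Phi_{{}^p\mathscr{H}^n(\mF)}$, whereas the paper simply applies the triangulated functor $\Ind_{T\subset B}^G(-)^\rW$ to the truncation triangle and inducts on the perverse amplitude of $\mF$ directly.
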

\begin{proof}
By Remark \ref{*=!},
it is enough to prove the statement for $*$-central complexes.
By induction on the (finite) number of 
non vanishing perverse cohomology sheaves, we can assume the conjecture is true for $*$-central complexes 
in $^p\sD^{[a,b]}_\rW(T)$, $|a-b|\leq l$. 
Let $\mF\in{^p\sD}^{[a,b+1]}_\rW(T)$ be a $*$-central complex.
We need to show that $\pi_*\Phi_{\mF}$ is supported on 
$T=U\backslash B\subset U\backslash G$, where $\pi:G\to U\backslash G$
is the quotient map.
Consider the following distinguished triangle 
\[^p\tau_{\leq b}\mF\to\mF\to{^p\sH^{b+1}}(\mF)[-b-1]\to.\]
We claim that both $^p\tau_{\leq b}\mF,{^p\sH^{b+1}}(\mF)\in \sD_\rW(T)$
are $*$-central. 
Applying the functor $\Ind_{T\subset B}^G(-)^\rW$ to the above 
distinguished triangle, we obtain 
\[\Phi_{^p\tau_{\leq b}\mF}\to\Phi_\mF\to\Phi_{{^p\sH^{b+1}}(\mF)}[-b-1]\to.\]
By induction, both $\pi_*(\Phi_{^p\tau_{\leq b}\mF})$
and $\pi_*(\Phi_{{^p\sH^{b+1}}(\mF)})$ are supported on $T=U\backslash B\subset U\backslash G$
and it implies $\pi_*\Phi_\mF$ is also supported on $T$. 

It remains to prove the claim. 
Let $\chi_f\in\calC(T)_f$ and let $\calC(T)_{\ell,\chi_f}=\{\chi_f\}\times\calC(T)_\ell$
be the corresponding component.
Let $q:\calC(T)_{\ell,\chi_f}\to\rW_{\chi_f}\backslash\backslash\calC(T)_{\ell,\chi_f}$ be the quotient map.
Note that for $G=\GL_n$ we have $\rW_\chi=\rW_\chi'$ for all $\chi$ and hence
by Proposition \ref{characterization}, we have 
\[\calM_*(\mF\otimes\sign)|_{\calC(T)_{\ell,\chi_f}}=q^*\mG\] for a $\mG\in D_{\on{coh}}^b(\rW_{\chi_f}\backslash\backslash\calC(T)_{\ell,\chi_f})$. Since both $\calM_*$ and $q^*$ are exact, we have 
\[\calM_*(^p\tau_{\leq b}(\mF\otimes\sign))|_{\calC(T)_{\ell,\chi_f}}=q^*(\tau_{\leq b}\mG),\ \ \calM_*({^p\sH^{b+1}}(\mF\otimes\sign))|_{\calC(T)_{\ell,\chi_f}}=q^*\sH^{b+1}(\mG),\] 
and by Proposition \ref{characterization} again it implies 
$^p\tau_{\leq b}\mF$ and ${^p\sH^{b+1}}(\mF)$ are $*$-central.
The proof is complete.

\end{proof}

Consider 
$G=(\prod_{i\neq j}\GL_{n_i})\times\GL_{n_j}$ with 
maximal torus $T=(\prod_{i\neq j}T_{i})\times T_{j}$, and
Weyl grorup $\rW=(\prod_{i\neq j}\rW_i)\times\rW_j$. Here $\rW_k$ 
denotes the symmetric group of degree $n_k$.
Consider the following map 
\[\eta=\id\times\det:T=(\prod_{i\neq j}T_{i})\times T_{j}\longrightarrow T':=(\prod_{i\neq j}T_{i})\times\bG_m.\]
The symmetric group $\rW_j$ of degree $n_j$ acts 
on $T$ via 
the permutation action on the factor $T_j$ and 
$\eta$ is $\rW_j$-invariant.
Thus 
for any $\mF\in\sD_\rW(T)$ the push-forward 
$\eta_!(\mF)$ carries a natural $\rW_j$-action.
We have the following generalization of Deligne's result 
about $\rS_{\underline\lambda}$-action on hypergeometric sheaves 
$\Phi_{\underline\lambda}$ (see Proposition \ref{sign}) 
to central complexes. 

\quash{
Let $\eta:T\to T'$ be a morphism from the maximal torus of $G$ to another torus $T'$.
Let $\rW_\eta=\{w\in\rW|\eta\circ w=\eta\}$ be the stabilizer of $\eta$ in $\rW$. 
Then for any $\mF\in\sD_\rW(T)$ the push-forward 
$\eta_!(\mF)$ carries a natural $\rW_\eta$-action.
We have the following generalization of Deligne's result 
about $\rS_{\underline\lambda}$-action on hypergeometric sheaves 
$\Phi_{\underline\lambda}$ (see Proposition \ref{sign}) 
to central complexes. }

\begin{prop}\label{Deligne's lemma}
Let $\mF$ be a central complex in $\sD_\rW(T)$.
Then the above $\rW_j$-action on
$\eta_!(\mF)$ is given by the sign character 
$\sign_\rW:\rW_j\to\{\pm1\}$.
\end{prop}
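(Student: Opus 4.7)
The plan is to reduce the equivariance claim for $\eta_!\mF$ to a statement about fibers of its Mellin transform on $T'$, and then to exploit centrality of $\mF$ through an explicit coroot computation.

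First, I would observe that $\rW_j$ acts trivially on $T'$, since permutations of the coordinates of $T_j\is\bG_m^{n_j}$ preserve the determinant. Consequently $\eta_!\mF$ is a $\rW_j$-equivariant complex over a base carrying the trivial $\rW_j$-action and admits a canonical isotypic decomposition
$$\eta_!\mF\is\bigoplus_{\rho\in\on{Irr}\rW_j}V_\rho\otimes(\eta_!\mF)_\rho$$
(using the hypothesis $p\nmid|\rW|$). The claim is then equivalent to $(\eta_!\mF)_\rho=0$ for every irreducible $\rho\neq\sign_{\rW_j}$.

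The key geometric input is the containment $\rW_j\subset\rW_\chi$ for every character of the form $\chi=\eta^*\chi'$ pulled back from $\chi'\in\calC(T')(\barQ)$. It suffices to check this on generating reflections $s_\alpha$ with $\alpha$ a root of $\GL_{n_j}$, for which the coroot $\check\alpha_{ik}:\bG_m\to T_j\subset T$ has the explicit form $x\mapsto(1,\ldots,x,\ldots,x^{-1},\ldots,1)$ and satisfies $\det\circ\check\alpha_{ik}=1$. Since the $T_j$-component of $\chi$ factors through $\det$, the pullback $(\check\alpha_{ik})^*\mL_\chi$ is trivial and $s_{\alpha_{ik}}\in\rW_\chi$ by definition. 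Combining this with the projection formula gives a $\rW_j$-equivariant isomorphism
$$i_{\chi'}^*\calM_!(\eta_!\mF)\is R\Gamma_c(T,\mF\otimes\mL_\chi),$$
where on the right $\rW_j$ acts through its inclusion into $\rW_\chi$. Since $\mF$ is central, $\rW_\chi$ acts on this cohomology by $\sign_\rW$, and restricting yields the sign action of $\rW_j$ via $\sign_\rW|_{\rW_j}=\sign_{\rW_j}$.

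Applying Mellin transform to the isotypic decomposition and comparing $\rW_j$-types then forces $i_{\chi'}^*\calM_!((\eta_!\mF)_\rho)=0$ for every $\chi'$ whenever $\rho\neq\sign_{\rW_j}$, so that $\calM_!((\eta_!\mF)_\rho)$ is a coherent complex on $\calC(T')$ with vanishing stalks everywhere; conservativity of $\calM_!$ then yields $(\eta_!\mF)_\rho=0$. The main obstacle I anticipate is this final conservativity step, since \cite{GL} identifies $\calM_!$ as an equivalence only on the subcategory of monodromic complexes; should this be inconvenient, an alternative is to argue pointwise, showing directly that each simple reflection $s\in\rW_j$ acts by $-1$ on $\eta_!\mF$ via a local calculation over the codimension-one subtorus $T^s\subset T$ in the spirit of Lemma \ref{S_r-action}, which bypasses the conservativity question entirely.
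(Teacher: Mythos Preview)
Your argument is correct and follows essentially the same route as the paper: both reduce to showing that $\rW_j$ acts by sign on $\oH_c^*(T,\mF\otimes\eta^*\mL_{\chi'})$ via the projection formula, then appeal to a Gabber--Loeser vanishing criterion to conclude. The paper works one involution $\sigma\in\rW_j$ at a time (showing $(\eta_!\mF)^{\sigma=\id}=0$) rather than through the full isotypic decomposition, and invokes \cite[Proposition 3.4.5]{GL} for the final step: a complex $\mG\in\sD(T')$ vanishes iff $\oH_c^*(T',\mG\otimes\mL_{\chi'})=0$ for every tame $\chi'$. This is precisely the conservativity-type statement you flagged as an obstacle, so your worry is unfounded and your backup plan is unnecessary. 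One small point in your favor: your explicit coroot computation verifying $\rW_j\subset\rW_\chi$ is cleaner than the paper's, which only records that $\sigma$ fixes $\eta^*\mL_{\chi}$ (i.e.\ $\sigma\in\rW'_\chi$) and implicitly uses that $\rW_\chi=\rW'_\chi$ for products of general linear groups.
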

\begin{proof}
It suffices to show that $\eta_!(\mF)^{\sigma=\id}=0$ for any involution
$\sigma\in\rW_j$.
By a result of Laumon \cite[Proposition 3.4.5]{GL}, it is enough to show that
for any tame local system
$\mL_\chi$ on $T'$ we have 
\[\oH_c^*(T',\eta_!(\mF)^{\sigma=\id}\otimes\mL_\chi)=0.\]
Note that, for any such
$\mL_\chi$, we have
 \[\oH_c^*(T',\eta_!(\mF)^{\sigma=\id}\otimes\mL_\chi)\is
 \oH_c^*(T',\eta_!(\mF)\otimes\mL_\chi)^{\sigma=\id}\is 
 \oH_c^*(T,\mF\otimes \eta^*\mL_\chi)^{\sigma=\id}.\]
Since $\eta^*\mL_\chi$ is a tame local system on $T$
 fixed by $\sigma$, the central property of $\mF$ implies that the action of 
 $\sigma$ on $\oH_c^*(T,\mF\otimes\eta^*\mL_\chi)$
 is trivial. Thus we have $\oH_c^*(T,\mF\otimes\eta^*\mL_\chi)^{\sigma=\id}=0$ and the isomorphism above implies 
$\oH_c^*(T',\eta_!(\mF)^{\sigma=\id}\otimes\mL_\chi)=0$
for all tame local system $\mL_\chi$. 
The proposition follows.
\end{proof}

The following proposition
generalizes \cite[Proposition 5.1]{CN} to 
central complexes:
\begin{proposition}\label{reduction 2}
Let $G$ be a direct product of general linear group and let $Q$
be a mirabolic subgroup of $G$ of the form
\[Q=(\prod_{i\neq j}\GL_{n_i})\times Q_{j}\subset G=\prod_i\GL_{n_i}\]
where $Q_j$ is the mirabolic subgroup of $\GL_{n_j}$.
Let $\mF\in\sD_\rW(T)^\heartsuit$ be a $\rW$-equivariant
central perverse sheaf on $T$.
Then for any $x\in G\setminus Q$, we have 
\[\oH^*_c(U_Qx,i^*\Phi_\mF)=0\]
where $i:U_Qx\to G$ is the inclusion map. 
\end{proposition}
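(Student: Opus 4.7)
The plan is to argue by induction on $n_j$, the size of the $\GL$-factor of $G$ in which the mirabolic $Q$ sits. The case $n_j=1$ is vacuous since then $Q_j=\GL_1=\GL_{n_j}$, so $G=Q$. For the inductive step, observe that $\Phi_\mF=\Ind_{T\subset B}^G(\mF)^\rW$ is $G$-conjugation equivariant, so $\oH^*_c(U_Qx,i^*\Phi_\mF)$ is unchanged upon replacing $x$ by a $Q$-conjugate. By Lemma \ref{linear algebra}, we may then assume the $\GL_{n_j}$-component $x_j$ lies in the stratum $X_m$ for some $2\leq m\leq n_j$ (since $x\notin Q$ forces $x_j\notin X_1=Q_j$) and is in standard block form with an $m\times m$ companion-matrix block $x_{F_m}$. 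I split into two cases according to whether $m<n_j$ or $m=n_j$.

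If $m<n_j$, extend the parabolic $P_m\subset\GL_{n_j}$ to $\tilde P_m\subset G$ with Levi $\tilde L_m=(\prod_{i\neq j}\GL_{n_i})\times\GL_m\times\GL_{n_j-m}$, whose mirabolic $Q_{\tilde L_m}:=\tilde L_m\cap Q$ sits in the $\GL_m$-factor of strictly smaller size. Lemma \ref{key lemma 2} furnishes the isomorphism $U_Q\times U_{m-1}\xrightarrow{\sim}U_mU_{Q_{\tilde L_m}}x$ via conjugation-multiplication, and together with the conjugation invariance of $\Phi_\mF$, Künneth reduces the desired vanishing to $\oH^*_c(U_mU_{Q_{\tilde L_m}}x,i^*\Phi_\mF)=0$. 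Proper base change along the split projection $\tilde P_m\to\tilde L_m$ identifies the latter with $\oH^*_c(U_{Q_{\tilde L_m}}x',i^*\Res^G_{\tilde L_m\subset\tilde P_m}(\Phi_\mF))$, and the Mackey formula (Proposition \ref{Mackey formula}(2)) rewrites the restriction as $\Phi^{\tilde L_m}_\mF$. Since $x_{F_m}\cdot e_1=e_2$ gives $x'\notin Q_{\tilde L_m}$, and since $\mF$ remains central as a $\rW_{\tilde L_m}$-equivariant perverse sheaf (using $(\rW_{\tilde L_m})_\chi\subset\rW_\chi$ and that sign characters of nested symmetric groups restrict compatibly), the induction hypothesis applied to $\tilde L_m$ closes this case.

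If $m=n_j$, then $x$ lies in $G^{\on{reg}}$, and by Lemma \ref{det} the Chevalley map $c$ takes $U_Qx$ isomorphically onto $(\prod_{i\neq j}\{c_i(x_i)\})\times\bbA^{n_j-1}\times\{\det x_j\}$. Using formula~\eqref{Ind IC} together with the smoothness of $c$ on $G^{\on{reg}}$ (which places $p_G^{-1}(U_Qx)\subset S$ in the smooth locus of $S$, so that $\IC(S,\barQ)$ restricts to a constant sheaf there), one obtains $\oH^*_c(U_Qx,i^*\Phi_\mF)\is\oH^*_c(\det_{T_j}^{-1}(\det x_j)\times\prod_{i\neq j}q_i^{-1}(c_i(x_i)),\mF)^\rW$. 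This cohomology is a finite direct sum of stalks of $\eta_!\mF$ over points of $T':=(\prod_{i\neq j}T_i)\times\bG_m$, where $\eta=\id\times\det_{T_j}\colon T\to T'$. Proposition \ref{Deligne's lemma} says $\rW_j$ acts on $\eta_!\mF$ through the sign character, so its $\rW_j$-invariants vanish stalkwise; since $\rW_j\subset\rW$, the $\rW$-invariants vanish as well, giving the desired cohomology vanishing.

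The main obstacle is the case $m=n_j$, where no further parabolic reduction is available and one must directly exploit the central hypothesis on $\mF$. Proposition \ref{Deligne's lemma}---the key new ingredient over the Cheng-Ng\^o argument for hypergeometric sheaves---supplies precisely the required sign action of the symmetric-group factor $\rW_j$ on the pushforward $\eta_!\mF$, playing the role that Proposition \ref{sign} plays in \cite{CN}.
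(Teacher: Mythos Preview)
Your inductive reduction in the case $m<n_j$ is correct and matches the paper's argument (the paper phrases it as ``apply the top-stratum case to the Levi'' rather than as an induction on $n_j$, but since the companion block $x_{F_m}$ is regular in $\GL_m$ these are the same).

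The gap is in the case $m=n_j$. You assert that ``$x$ lies in $G^{\on{reg}}$'', but the hypothesis $x_j\in X_{j,n_j}$ only gives regularity of the $j$-th component; the components $x_i\in\GL_{n_i}$ for $i\neq j$ are completely arbitrary and need not be regular. Consequently $p_G^{-1}(U_Qx)$ need not lie in the smooth locus of $S$, the identification $\Phi_\mF|_{U_Qx}\is c^*(q_!\mF)^\rW|_{U_Qx}$ fails, and your displayed isomorphism
\[
\oH^*_c(U_Qx,i^*\Phi_\mF)\is\oH^*_c\bigl(\det_{T_j}^{-1}(\det x_j)\times\prod_{i\neq j}q_i^{-1}(c_i(x_i)),\mF\bigr)^\rW
\]
is not available. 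The description of this as ``stalks of $\eta_!\mF$'' likewise breaks down, since for non-regular $x_i$ the fiber $q_i^{-1}(c_i(x_i))$ is not even reduced.

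The fix is exactly what the paper does: apply the Grothendieck--Springer Cartesian square only on the $j$-th factor. Writing $L_j=(\prod_{i\neq j}\GL_{n_i})\times T_j$ and $\Phi_{L_j,\mF}=\Ind_T^{L_j}(\mF)^{\rW_{L_j}}$, one gets $\Phi_\mF|_{X_{n_j}}\is c^*q_!(\Phi_{L_j,\mF})^{\rW_j}$ with $c,q$ now landing in $(\prod_{i\neq j}\GL_{n_i})\times\rW_j\backslash\backslash T_j$. Lemma~\ref{det} then reduces the vanishing to $({\det}_{j,!}\Phi_{L_j,\mF})^{\rW_j}=0$, and a further base-change identifies ${\det}_{j,!}\Phi_{L_j,\mF}$ with $\Ind_{T'}^{L_j'}(\eta_!\mF)^{\prod_{i\neq j}\rW_i}$. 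Now Proposition~\ref{Deligne's lemma} says $\rW_j$ acts on $\eta_!\mF$ by sign, and since $\Ind_{T'}^{L_j'}(-)^{\prod_{i\neq j}\rW_i}$ is $\rW_j$-equivariant, the $\rW_j$-invariants vanish. In short, the other $\GL_{n_i}$-factors must be carried along through the induction functor rather than reduced to their Chevalley quotients.
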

\begin{proof}
If $n_j=1$, then $Q=G$ and the proposition holds vacuously.
Assume $n_j\geq 2$.
Consider the following stratification of $G$
\[G=\bigsqcup_{m=1}^{n_j} X_m=\bigsqcup_{m=1}^{n_j}\big(\prod_{i\neq j}\GL_{n_i}\times X_{j,m}\big)\]
where $X_{j,m}\subset\GL_{n_j}$ is the subset introduced in~\eqref{X_m}.
We have $Q=X_1$, thus the assumption $x\notin Q$ implies 
$x\in X_m$ for some $2\leq m\leq n_j$.

Consider the case $x\in X_{j,n_j}$. Since $X_{j,n_j}$ is contained in
$\GL_{n_j}^{\on{reg}}$, the Grothendieck-Springer simultaneous resolution implies a
Cartesian diagram
\[\xymatrix{\tilde X_{n_j}\ar[r]^{\tilde c\ \ \ \ \ \ \ \ \ \ }\ar[d]^{\tilde q}&(\prod_{i\neq j}\GL_{n_i})\times T_j\ar[d]^q\\
X_{n_j}\ar[r]^{c\ \ \ \ \ \ \ \ \ \ \ \ }&(\prod_{i\neq j}\GL_{n_i})\times\rW_j\backslash\backslash T_j}.\]
It follows that
\[\Phi_{\mF}|_{X_{n_j}}\is c^*q_!(\Phi_{L_j,\mF})^{\rW_j}\]
where $L_j=(\prod_{i\neq j}\GL_{n_i})\times T_j$ and 
$\Phi_{L_j,\mF}=\Ind_T^{L_j}(\mF)^{\rW_{L_j}}$.
Consider the map
\beq\label{det}
\on{det}_{\rW_j}=\Id\times\sigma:(\prod_{i\neq j}\GL_{n_i})\times\rW_j\backslash\backslash T_j\to
(\prod_{i\neq j}\GL_{n_i})\times\bG_m
\eeq
where $\sigma:\rW_j\backslash\backslash T_j\is\GL_{n_j}\backslash\backslash\GL_{n_j}\to\bG_m$
is given by the determinant function on $\GL_{n_j}$.
By Lemma \ref{det}, the restriction of $c$ to $U_Qx$ induces an isomorphism between $U_Qx$ and the fiber of~
~\eqref{det}
over the image $\on{det}_{\rW_j}\circ c(x)$ of $x$. 
Thus, to prove the desired vanishing, it is enough to show that 
\beq\label{vanishing over regular loci}
\on{det}_{j,!}(\Phi_{L_j,\mF})^{\rW_j}\is\on{det}_{\rW_j,!}q_!(\Phi_{L_j,\mF})^{\rW_j}=0
\eeq
where
\[\on{det}_j=\on{det}_{\rW_j}\circ q:L_j=(\prod_{i\neq j}\GL_{n_i})\times T_j\to
(\prod_{i\neq j}\GL_{n_i})\times\bG_m\]
is the determinant map on the $j$th component.
Consider the following Cartesian diagrams
\[\xymatrix{L_j=(\prod_{i\neq j}\GL_{n_i})\times T_j\ar[d]^{\on{det}_j=\Id\times\det}&(\prod_{i\neq j}\widetilde\GL_{n_i})\times T_j\ar[l]_{}\ar[r]\ar[d]^{\id\times\det}&T=(\prod_{i\neq j}T_{i})\times T_j\ar[d]^{\id\times\det}\\
L_j'=(\prod_{i\neq j}\GL_{n_i})\times \bG_m&\prod_{i\neq j}\widetilde\GL_{n_i}\times \bG_m\ar[l]\ar[r]&
T'=(\prod_{i\neq j}T_{i})\times \bG_m},\]
where the horizontal arrows are the map induced by 
the Grothendieck-Springer simultaneous resolution (see \eqref{G-S resolution}).
It follows that 
\beq\label{L_j'}
\on{det}_{j,!}\Phi_{L_j,\mF}\is\on{det}_{j,!}\Ind_{T}^{L_j}(\mF)^{\rW_{L_j}=\prod_{i\neq j}\rW_i}\is\Ind_{T'}^{L_j'}(\mF')^{\prod_{i\neq j}\rW_i}
\eeq
where 
\[\mF'=(\Id\times\det)_!(\mF).\]
By Proposition \ref{Deligne's lemma}, the action of $\rW_j$ on 
$\mF'$ is given by the sign character $\sign_\rW:\rW_j\to\{\pm1\}$.
Therefore, by \eqref{L_j'}, the action $\rW_j$ on 
$\on{det}_{j,!}\Phi_{L_j,\mF}$ is by the sign character and we have 
\[\on{det}_{j,!}(\Phi_{L_j,\mF})^{\rW_j}=0\]
as for $n_j\geq 2$ the sign character of $\rW_j$ is non-trivial.
This concludes the case when $x\in X_{n_j}$.

Consider the general case $x\in X_m$ with $2\leq m\leq n_j$.
By Lemma \ref{linear algebra}, we can assume the 
$j$th component $x_j\in\GL_{n_j}$ of $x$ is of the form 
form~\eqref{standard form} with $x_{j,F}$ being a companion form in~\eqref{companion}.
Let $P=\prod_{i\neq j}\GL_{n_i}\times P_m$ denote the parabolic subgroup
of $G$ where $P_m$ is the parabolic subgroup of $\GL_{n_j}$ introduced in
Section \ref{mirabolic}. Let $L=(\prod_{i\neq j}\GL_{n_i})\times L_m$ and $U_P=(\prod_{i\neq j}\Id_{n_i})\times U_m$
be the standard Levi subgroup of $P$ and its unipotent radical. 
We have $x\in P$ and let $x_L$ be its image in $L$.
Consider the mirabolic subgroup $Q_L$ of $L$ with 
unipotent radical $U_{Q_L}=(\prod_{i\neq j}\Id_{n_i})\times Q_{L_m}$.
By applying the result obtained above to the case to $L$, we obtain 
\[\oH_c^*(U_{Q_L}x_L,\Phi_{L,\mF}|_{U_{Q_L}x_L})=0,\]
where $\Phi_{L,\mF}=\Ind_{T}^L(\mF)^{\rW_L}$.
Note that, by Proposition \ref{Mackey formula}, we have 
$\Phi_{L,\mF}\is\Res_{L\subset P}^G\Phi_{\mF}$, and above cohomology vanishing implies 
\beq\label{reduction to L}
\oH_c^*(U_PU_{Q_L}x,\Phi_{\mF}|_{U_PU_{Q_L}x})=0.
\eeq
Using Lemma \ref{key lemma 2}, we see that the map
\[U_Q\times\big(\prod_{i\neq j}\Id_{n_i}\times U_{m-1}\big)\to U_PU_{Q_L}x\]
given by $(u_q,u_{m-1})\to u_{m-1}u_qxu_{m-1}^{-1}$
is an isomorphism. Since $\Phi_{\mF}$ is $G$-conjugation equivariant, 
\eqref{reduction to L} implies
\[\oH_c^*(U_Qx,\Phi_{\mF}|_{U_Qx})=0.\]
The proposition follows.
\end{proof}

We are ready to prove Theorem \ref{main 2}.
Let $G=\GL_n$ and let $Q$ be the mirabolic subgroup.
Let $\mF\in\sD_\rW(T)$ be a central complex.
We would like to show that 
$\oH_c^*(Ux,\Phi_\mF|_{Ux})=0$
for $x\in G\setminus B$. 
By Lemma \ref{reduction to perverse sheaf}, we can assume $\mF$ is a perverse sheaf.
Consider the case $x\notin Q$. Then $ux\notin Q$ for all $u\in U$ and, by Proposition \ref{reduction 2}, we have 
$\oH_c^*(U_Qux,\Phi_{\mF}|_{U_Qux})=0$. 
The desired vanishing $\oH_c^*(Ux,\Phi_{\mF}|_{Ux})=0$
follows from the Leray spectral sequence associated to the 
map $Ux\to U_Q\backslash Ux$.
Now assume $x\in Q$. 
Let $x_L$ be the image of $x$ in the standard Levi $L$ of Q.
Note that we have $x_L\notin B_L=B\cap L$ and 
\[\oH_c^*(Ux,\Phi_{\mF}|_{Ux})=\oH_c^*(U_{B_L}x_L,\Res_{L}^G\Phi_\mF|_{U_{B_L}x_L})=
\oH_c^*(U_{B_L}x_L,\Phi_{L,\mF}|_{U_{B_L}x_L})\]
where $U_{B_L}$ is the unipotent radical of $B_L$.
Now, using Proposition \ref{reduction 2}, we can conclude by an induction 
argument.

\quash{
\section{de Rham setting}\label{Whittaker}
\subsection{} 
In this section we briefly discuss the de Rham counterpart of the story here and 
connections with related works, for more details see \cite{C}.

The space of tame characters in the de Rham setting is by definition $\calC(T)_{\on{dR}}:=\Spec(\bC[\pi_1(T)])\is\check T$, whose 
geometric points are in bijection with 
$\Hom(\pi_1(T),\bG_m)$. 
For any $\chi\in\calC(T)_{\on{dR}}(\bC)=\Hom(\pi_1(T),\bG_m)$, we denote by 
$\mL_\chi$ the corresponding de Rham local system on $T$
and by $\rW_\chi'$ the stabilizer of $\chi$ in $\rW$ and $\rW_\chi\subset\rW_\chi'$ the subgroup generated by 
those reflections $s_\alpha$ such that $(\check\alpha)^*\mL_\chi$ is trivial.

\begin{definition}
\begin{enumerate}
\item
A $\rW$-equivariant holonomic $D$-module $\mF$ on $T$
 is called central if 
for any tame character $\chi$, the group $\rW_\chi$ acts on 
$\oH_c^*(T,\mF\otimes\mL_\chi)$
via the sign character $\sign_\rW$.
\item
A $\rW$-equivariant $D$-module 
$\mF$ on $T$
is called $*$-central if 
for any tame character $\chi$, the group $\rW_\chi$ acts on 
$\oH^*(T,\mF\otimes\mL_\chi)$
via the sign character $\sign_\rW$.

\end{enumerate}
\end{definition}

We denote by $\calZ(\rW\backslash T)$ and
$\calZ(\rW\backslash T)_*$ and 
the category of central holonomic $D$-modules on $T$ and
$*$-central $D$-modules
on $T$ respectively.
We shall describe a
characterization of $*$-central 
$D$-modules using the algebraic Mellin transform
and connection with the recent works of 
Ginzburg and Lonergan
 \cite{G,L}.
Denote by $\check\ft=\Lie\check T$ the Lie algebra of $\check T$,
$\rW_a=\rW\ltimes\mathrm R$ the affine Weyl group (here $\mathrm R\subset \bbX_\bullet(\check T)$ is the set of co-roots of $\check G$), and  
$\rW_a^{\on{ex}}=\rW\ltimes\bbX_\bullet(\check T)$ the extended affine Weyl group.
Recall the algebraic Mellin transform
$\Dmod(T)\is\on{QCoh}(\check\ft)^{\bX_\bullet(\check T)}$
between the category of $D$-module on $T$ and 
the category of $\bX_\bullet(\check T)$-equivariant quasi-coherent on
$\check\ft$. There is a lift of the algebraic Mellin transform 
\[\calM_{\on{dR},*}:\Dmod(\rW\backslash T)\is\on{QCoh}(\check\ft)^{\rW_a^{\on{ex}}}\]
to the corresponding $\rW$ and $\rW_a^{ex}$-equivariant
categories (see, e.g., \cite[Section 4]{C}).

We have the following lemma: 
\begin{lemma}\label{stabilizer}
(1)
Let $\chi\in\calC(T)_{\on{dR}}$ be a tame character and let 
$v\in\check\ft$
be a lift of $\chi$ in the universal cover 
$\check\ft\to\check T\is\calC(T)_{\on{dR}}$.
Let $\rW_{a,v}$ and $\rW^{\on{ex}}_{a,v}$ be the stabilizer of $v$ in 
$\rW_a$ and $\rW_a^{\on{ex}}$
respectively.
We have canonical isomorphisms 
\[\rW_\chi\is\rW_{a,v},\ \ \rW'_\chi\is\rW^{\on{ex}}_{a,v}.\]
(2)
We have $\rW_\chi=\rW'_\chi$ if the center of $G$ is connected 

\end{lemma}

The lemma above together with the de Rham counterpart of the isomorphism~\eqref{fiber}:
\[\oH^*(T,\mF\otimes\mL_\chi)\is i_v^*(\calM_{\on{dR},*}(\mF)),\]
where $v$ is a lift of $\chi$ and  
$i_v:\{v\}\to\check\ft$ is the natural inclusion,
imply the 
following characterization of $*$-central $D$-modules:
\begin{lemma}\label{characterization deRham}
A $\rW$-equivariant $D$-module $\mF\in\Dmod(\rW\backslash T)$ on $T$ is $*$-central
if and only if the Mellin transform 
\[\mT:=\calM_{\on{dR},*}(\mF\otimes\sign)\in \on{QCoh}(\check\ft)^{\rW_a^{ex}}\] satisfies the following 
property: for every $v\in\check\ft$, the action of 
$\rW_{a,v}$ on the derived fiber $i_v^*\mT$ at $v$ is trivial.
\end{lemma}

Let $\on{QCoh}(\rW_a\backslash\backslash\check\ft)^{\rW_a^{ex}}$
be the full subcategory of $\on{QCoh}(\check\ft)^{\rW_a^{ex}}$
whose objects are all those whose $\rW_{a,v}$-equivariant 
structure descends to the quotient $\rW_{a,v}\backslash\backslash\check\ft$,
for every $v\in\check\ft$. 
Note that objects in 
$\on{QCoh}(\rW_a\backslash\backslash\check\ft)^{\rW_a^{ex}}$
satisfy the property in Lemma \ref{characterization deRham} (see, e.g., the proof of Proposition \ref{characterization}), hence the inverse Mellin transform induces a fully-faithful embedding
\beq\label{embedding}
\on{QCoh}(\rW_a\backslash\backslash\check\ft)^{\rW_a^{ex}}\longrightarrow\calZ(\rW\backslash T)_*,\ \ \ \calT\to\calM_{\on{dR},*}^{-1}(\calT)\otimes\sign 
\eeq
Moreover, it is shown in \cite[Theorem 4.3]{C} that~\eqref{embedding}
is in fact essentially surjective, and hence is an equivalence of categories.
Let $\on{Whit}(G)$ be the Whittaker category on $G$ whose objects are 
$(U\times U,\psi\times\psi)$-equivariant $D$-modules on $G$.
It is shown in  \cite[Theorem 1.5.1]{G}  and \cite[Theorem 1.2.2]{L} that there is an equivalence 
\[\on{Whit}(G)\is\on{QCoh}(\rW_a\backslash\backslash\check\ft)^{\rW_a^{ex}}.\]
The equivalence above together with~\eqref{embedding} give rise to an equivalence 
\[\on{Whit}(G)\is\calZ(\rW\backslash T)_*\]
between the category of Whittaker $D$-modules on $G$ and the category of 
$*$-central $D$-modules on $T$.

\quash{
Consider the following functor
\beq\label{ngo}
\on{Whit}(G)\longrightarrow\calZ(\rW\backslash T)_*\to \Dmod(G)
\eeq
where the second arrow is given by $\Ind_{T\subset B}^G(-)^\rW$.
Then the de Rham counterpart of the vanishing conjecture implies that 
objects in the essential image of 
~\eqref{ngo} satisfy the 
acyclicity in~\eqref{statement}.}

}

\end{document}